\pgfplotsset{compat=1.15}
\DeclarePairedDelimiterX\set[1]\lbrace\rbrace{#1}
\newtheorem*{theorem*}{Theorem}
\newtheorem{theorem}{Theorem}[section]
\newtheorem{lemma}[theorem]{Lemma}
\newtheorem{proposition}[theorem]{Proposition}
\theoremstyle{definition}
\newtheorem{definition}[theorem]{Definition}
\newtheorem{problem}[theorem]{Problem}
\newtheorem*{problem*}{Problem}
\newtheorem{remark}{Remark}[theorem]
\def \a  {\alpha}
\def \b  {\beta}
\def \g  {\gamma}
\def \d  {\delta}
\def \D  {\Delta}
\def \e  {\varepsilon}
\def \f  {\varphi}
\def \om {\omega}
\def \Om {\Omega}
\def \r  {\rho}
\def \t  {\tau}
\def \del {\nabla}
\def \dd {\mathrm{d}}
\def \p  {\partial}
\def \R  {\mathds{R}}
\def \N  {\mathbb{N}}
\def \Maxi {\mathrm{M}}
\def \Mini {\mathrm{m}}
\def \Linf {L^{\infty}(\Om)}
\def \H {H^1_0\left(\Om \right)}
\def \Phib {\Breve{\Phi}}
\def \dx {\text{d}x}
\newcommand{\brac}[1]{\left(#1 \right)}
\newcommand{\posi}[1]{\left[#1 \right]_+}
\newcommand{\nega}[1]{\left[#1 \right]_-}
\newcommand{\Lom}[1]{L^{#1}(\Omega)}
\newcommand{\infnorm}[1]{\norm{#1}_{L^{\infty}(\Omega)}}
\numberwithin{equation}{section}
\newcounter{AssBio}
\newcounter{AssEx}
\title{Robust time-discretisation and linearisation schemes for singular and degenerate evolution systems modelling biofilm growth}
\author{R.K.H. Smeets\thanks{email: \href{mailto:r.k.h.smeets@uva.nl}{r.k.h.smeets@uva.nl}}}
\affil{\small University of Amsterdam, Korteweg-de Vries Institute for Mathematics, The Netherlands}
\author{K. Mitra}
\affil{Eindhoven University of Technology, Department of Mathematics and Computer Science, The Netherlands}
\author{I.S. Pop}
\affil{Hasselt University, Faculty of Science, Belgium}
\author{S. Sonner}
\affil{Radboud University, IMAPP - Mathematics, The Netherlands}
\date{\today}
\begin{document}

\maketitle

\begin{abstract}
   We propose and analyse numerical schemes for a system of quasilinear, degenerate evolution equations modelling biofilm growth as well as other processes such as flow through porous media and the spreading of wildfires.  
   The first equation in the system is parabolic and exhibits degenerate and singular diffusion, while the second 
   is either uniformly parabolic or an ordinary differential equation. 
   First, we introduce a semi-implicit time discretisation that has the benefit of decoupling the equations.  
   We prove the positivity, boundedness, and convergence of the time-discrete solutions to the time-continuous solution. Then, we introduce an iterative linearisation scheme to solve the resulting nonlinear time-discrete problems. Under weak assumptions on the time-step size, we prove that the scheme converges irrespective of the space discretisation and mesh. Moreover,
   if the problem is non-degenerate, the convergence becomes faster as the time-step size decreases. Finally, employing the finite element method for the spatial discretisation, we study the behaviour of the scheme, and  
   compare its performance to other commonly used schemes. These tests confirm that the proposed scheme is robust and fast.
   \\

   \textbf{Keywords:} degenerate diffusion; time discretisation; linearisation; unconditional convergence; stability; biofilm models; porous medium equation\\
   
   \textbf{MSC: 65M12, 65M22, 35K51, 35K65}
\end{abstract}

\section{Introduction} \label{sec: 1}

\subsection{Motivation}\label{sec: 1 - biofilm}
Let $T > 0$ be a maximal time and $\Om \subset \R^d$ ($d \in \N$) be a bounded Lipschitz domain. With $Q=\Omega\times (0,T]$ denoting the parabolic space-time cylinder, we consider 
the following class of degenerate quasilinear parabolic systems
\begin{subequations} \label{eq: main}
\begin{align}
&\p_t u=\D \Phi(u) + f(v)u, \label{eq: main 1}\\
&\p_t v= \mu \nabla \cdot \left(D(u)\nabla v \right) + g(u,v) \label{eq: main 2}
\end{align}
\end{subequations}
in $Q$.
The first equation \eqref{eq: main 1} describes evolution of a population density $u$ and exhibits degenerate and possibly also singular diffusion leading to the formation of free boundaries propagating at a finite speed. More specifically, the monotone function $\Phi$ vanishes as $u$ tends to zero and can in addition have a singularity as $u$ tends to its maximum value. The diffusion coefficient  
$D$ in the second equation which describes evolution of a substrate concentration $v$ is assumed non-degenerate, i.e. it is bounded above and below by positive constants. 
However, the mobility coefficient  $\mu$ 
appearing in \eqref{eq: main 2} may be either 0 or 1, leading 
to either a coupled system of a parabolic equation and an ordinary differential equation (if $\mu=0$), or two  parabolic equations (if $\mu=1$). The growth and spreading of the population $u$ might depend on multiple substrates. Nevertheless, for simplicity, we consider only one substrate $v$  in \eqref{eq: main 2}, as
an extension to multiple substrates is straightforward.
The system is completed by the initial conditions $u(\cdot, 0) = u_0$ and $v(\cdot, 0) = v_0$ for given functions $u_0, v_0$, and by homogeneous Dirichlet boundary conditions for $u$ and, if $\mu \ne 0$, also for $v$. 
Analysing system \eqref{eq: main} analytically and numerically is challenging due to the 
degenerate and singular diffusion in the first equation which leads to free boundaries and steep gradients, and the nonlinear coupling with the second equation.

The main motivation for this work comes from the biofilm growth models in \cite{EBERLPDEODE, EBERLPDEPDE}, where  
the solution component $u$ in \eqref{eq: main 1} describes the (normalized) biomass density whose evolution is dictated by the diffusion operator
\begin{align}
\Delta\Phi(u)=\nabla\cdot\left(\frac{u^\alpha}{(1-u)^\beta}\nabla u\right) \label{eq: biof 1},\qquad \text{ for some } \qquad \alpha,\beta\geq 1.
\end{align}
With this, \eqref{eq: main 1}   
is coupled to a reaction-diffusion partial differential equation (PDE) or an ordinary differential equation (ODE) modelling the evolution of  
the growth-limiting nutrient concentration $v$.
In the resulting model, the biofilm occupies the region where 
$\{u>0\}$. 
Observe that the  
biomass diffusion in
\eqref{eq: biof 1} shows a degeneracy of porous-medium type as $u$ approaches $0$, ensuring a finite speed of 
propagation of the interface between the biofilm and the surrounding region, as well as a singularity as $u$ approaches $1$. The latter implies that the solution $u$ remains bounded by a constant strictly less than 1 despite the growth term $f$ in the equation. The second equation, \eqref{eq: main 2}, describes the evolution of the nutrient concentration.
The case $\mu=0$ corresponds to immobile substrates (e.g., in the case of cellulolytic biofilms, \cite{EBERLPDEODE}), while $\mu=1$ corresponds to diffusive substrates (e.g., whenever biofilms grow in an aqueous medium, \cite{EBERLPDEPDE}).
The biofilm growth model  
was also extended to take multiple substrates into account, both mobile and immobile, like in \cite{emerenini2016mathematical,mitra2021existence}. The results of our paper generalise directly to these cases. 
More complex multi-species biofilm models including cross-diffusion have been studied in \cite{DaMiZa,RaSuEb}. 

Systems of the form \eqref{eq: main} are not limited to models for biofilm growth. For instance, coupled systems of parabolic and ordinary differential equations appear in the modelling of two-phase or unsaturated flow through porous media when effects like hysteresis or dynamic capillarity are taken into account 
\cite{mitra2021existence,mitra2019wetting} (degenerate, but non-singular diffusion). They also appear in wildfire models \cite{reisch2023analytical} (nonlinear but non-degenerate diffusion), and reaction, diffusion, and adsorption/desorption models in a porous medium \cite{Helmig}, to name a few other applications. 

The aim of this paper is to develop robust, efficient, and structure-preserving time discretisation and linearisation methods for System \eqref{eq: main}  relying on minimal regularity assumptions, and converging even for degenerate and singular diffusion. In what follows, the time discretisation is introduced, as well as the linear iterative schemes. Furthermore, the main results concerning the stability and convergence of these schemes are stated, their proofs being given in the subsequent sections.  

\subsection{Time discretisation} \label{sec: 1 - time}

To define the time discretisation, we take $N \in \N$ and let $\t = \frac T N$ be the time-step size, which is chosen uniform for the ease of presentation. With $n \in \{0, \dots, N\}$, let $t_n = n \t$ be the time-step and denote by $u_n$ the approximation of $u$ at $t = t_n$, and similarly for $v_n$. Then, we use an Euler semi-implicit approach for the time discretisation. All terms in \eqref{eq: main} are discretised implicitly except for the 
reaction functions $f$ and $g$, which are discretised semi-implicitly.  
\begin{problem}[Semi-implicit time discretisation]\label{prob: time-discrete}
Given the approximate solutions $u_{n-1}$ and $v_{n-1}$ at time $t_{n-1}$, find the approximate solution pair $(u_n, v_n)$ at the next time step $t_n$ by solving the following system
\begin{subequations} \label{eq: time-discrete}
\begin{align}
&\frac{1}{\t}(u_n-u_{n-1})=\D  \Phi(u_n) + f(v_{n-1}) u_n, \label{eq: time-discrete 1}\\
&\frac{1}{\t}(v_n-v_{n-1})=\mu \nabla \cdot \left(D(u_n)\nabla v_n \right) + g(u_n, v_{n-1}). \label{eq: time-discrete 2}
\end{align}
\end{subequations}
\end{problem}
This approach has several advantages over explicit and implicit discretisations. Explicit methods lead to a loss of regularity of the time-discrete solutions, which results in instability due to the already low regularity triggered by degenerate diffusion. On the other hand, implicit schemes have the advantage that only a \textit{weak restriction on the time-step size}, independent of the space-discretisation,  is needed to guarantee stability.
Fully implicit time-discretisation schemes for the mentioned biofilm models were analysed in \cite{BALSACANTO2017164, duvnjak2006time, ghasemi_sonner_eberl_2018, HELMER2023386}. However, for fully implicit schemes, the two time-discrete equations originating from \eqref{eq: main} are coupled which makes them challenging to solve especially using an iterative method. In the semi-implicit approach \eqref{eq: time-discrete} the two equations are decoupled. This allows us to solve them sequentially instead of iteratively, i.e. we first solve for $u_n$ and then update $v_n$ using the known $u_n$. In fact, given $u_n$, \eqref{eq: time-discrete 2} is a linear problem for $v_n$ 
which can be solved rather easily. Moreover, with the proposed semi-implicit discretisation we retain the same accuracy and stability one would expect from fully implicit discretisations. Generalising the results in \cite{duvnjak2006time} for the  scalar equation \eqref{eq: main 1}, we show that
under weak assumptions on~$\t$, the discretisation \eqref{eq: time-discrete} 
is well-posed, the time-discrete solutions preserve positivity, remain bounded, and converge to the time-continuous solutions as $\t\to 0$. 
The exact results are stated in Theorems \ref{thm: main time 1} and \ref{theo:time_convergence}.
Below, we summarise them omitting technical details.

\begin{theorem*}[Well-posedness, boundedness, and convergence of the time-discrete solutions] 
For all sufficiently small time steps $\t$, there exists a unique weak solution of \eqref{eq: time-discrete}. 
Moreover, the time-discrete solutions $u_n,v_n$ are positive and $u_n$ is bounded 
almost everywhere in $\Om$. In particular, if $\Phi$ is singular, $u_n$ is bounded by a constant strictly less than the singularity. 
Lastly, the time-discrete solutions converge to the time-continuous solutions as $\t\to 0$.
\end{theorem*}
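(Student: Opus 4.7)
The plan is to address the four claims---existence/uniqueness, positivity, boundedness, and convergence---in that order, exploiting the decoupled structure of the semi-implicit scheme throughout.

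For each fixed $n\geq 1$, the semi-implicit update splits into two sub-problems. First, treating $v_{n-1}$ as known data, \eqref{eq: time-discrete 1} is a stationary quasilinear elliptic equation for $u_n$ of the form $u_n - \tau\Delta\Phi(u_n) - \tau f(v_{n-1})u_n = u_{n-1}$. Because $\Phi$ may be singular at its maximal value, I would regularise $\Phi$ by a globally Lipschitz monotone sequence $\Phi_\varepsilon$ and solve the corresponding problem for $u_n^\varepsilon$ via maximal-monotone operator theory or a Browder--Minty / Schauder fixed-point argument; coercivity of the resulting operator requires a mild smallness condition on $\tau$ that absorbs the lower-order reaction term $\tau f(v_{n-1})u_n$, which is the ``weak restriction on $\tau$'' alluded to in the theorem. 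Uniqueness follows by subtracting two candidate solutions, testing with their difference, and using the monotonicity of $\Phi$. With $u_n$ available, \eqref{eq: time-discrete 2} becomes either a linear uniformly elliptic problem for $v_n$ (if $\mu=1$, solved by Lax--Milgram thanks to the uniform ellipticity of $D(u_n)$) or an algebraic pointwise update (if $\mu=0$).

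Positivity I would obtain inductively. Assuming $u_{n-1},v_{n-1}\geq 0$, testing the $\varepsilon$-regularised equation for $u_n^\varepsilon$ with the negative part $[u_n^\varepsilon]_-$ annihilates the diffusion since $\Phi_\varepsilon(0)=0$ and $\Phi_\varepsilon$ is monotone, leaving an inequality of the form $(1-\tau\|f(v_{n-1})\|_{L^\infty})\|[u_n^\varepsilon]_-\|_{L^2(\Omega)}^2\leq 0$, which forces $u_n^\varepsilon\geq 0$; the identical test works for $v_n$. For the crucial $L^\infty$ bound strictly below the singularity I would select a threshold $M$ slightly below the singular value and test with $[\Phi(u_n^\varepsilon)-\Phi(M)]_+$: the divergence of $\Phi$ near the singularity, balanced against $\|u_{n-1}\|_{L^\infty}$ and the explicit-in-$v$ reaction, yields a bound $u_n^\varepsilon\leq M<1$ depending only on data and on $\tau$ through the coercivity constant, crucially \emph{uniformly in} $\varepsilon$. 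This uniform bound then permits the limit $\varepsilon\to 0$ and produces a genuine solution of \eqref{eq: time-discrete} that inherits positivity and the strict sub-singularity bound.

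For convergence as $\tau\to 0$, I would assemble the piecewise-constant and piecewise-affine interpolants $\bar u^\tau,\hat u^\tau$ and likewise for $v$. Testing \eqref{eq: time-discrete 1} with $\Phi(u_n)$ (or an appropriate primitive adapted to the singularity) and \eqref{eq: time-discrete 2} with $v_n$, and then summing over $n$, gives $\tau$-uniform bounds on $\nabla\Phi(u_n)$ and $\nabla v_n$ in $\ell^2(0,T;L^2(\Omega))$, together with control of the discrete time derivatives in a negative Sobolev norm obtained by duality with the equation. An Aubin--Lions argument applied to $\Phi(\hat u^\tau)$ then delivers strong convergence of $\Phi(u^\tau)$ in $L^2(Q)$; strict monotonicity of $\Phi$ restricted to the range dictated by the uniform $L^\infty$ bound upgrades this to strong convergence of $u^\tau$ itself, which suffices to pass to the limit in the nonlinear terms $f(v^\tau)u^\tau$, $D(u^\tau)\nabla v^\tau$, and $g(u^\tau,v^\tau)$. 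The main obstacle I anticipate is the boundedness step: the regularisation $\Phi_\varepsilon$ and the cut-off level $M$ must be chosen in a coordinated way so that the a priori estimate is genuinely $\varepsilon$-independent, since only then does the bound survive the passage $\varepsilon\to 0$ and faithfully reflect the confining effect of the singularity; it is also at this step that the quantitative form of the weak time-step restriction emerges.
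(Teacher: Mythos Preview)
Your overall architecture is sound and close to the paper's, but two points deserve comment.

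For existence, the paper takes a different and lighter route: rather than regularising $\Phi$ and invoking monotone-operator theory, it passes to $w=\Phi(u)$ as the primary unknown, observes that $\Psi:=\Phi^{-1}$ is well-behaved, and minimises the convex, coercive energy $J(w)=\int_\Omega\bigl[h_{n-1}\int_0^w\Psi + \tfrac{\tau}{2}|\nabla w|^2 - u_{n-1}w\bigr]$ over $H^1_0(\Omega)$; the minimiser is $w_n$ and $u_n=\Psi(w_n)$. This sidesteps any $\varepsilon$-regularisation and the associated limit passage entirely. Your approach would work but is heavier, and it is precisely what creates the coordination problem you flag at the end.

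The boundedness-below-the-singularity step is where your plan has a genuine gap. Testing with $[\Phi(u_n)-\Phi(M)]_+$ yields only the iterative bound $\|u_n\|_{L^\infty}\leq \|u_{n-1}\|_{L^\infty}/(1-\tau f_\Maxi)$, which after $n=T/\tau$ steps gives roughly $\|u_0\|_{L^\infty}e^{Tf_\Maxi}$; nothing prevents this from exceeding the singular value, and since at the $\varepsilon$-level $\Phi_\varepsilon$ has no singularity, the ``divergence of $\Phi$'' you invoke cannot intervene there. The paper instead uses a barrier argument: it lets $\tilde\omega$ solve the Poisson problem $-\Delta\tilde\omega = C f_\Maxi$ in $\Omega$ with boundary value $\|\Phi(u_0)\|_{L^\infty}$ (here $C$ is any crude a~priori upper bound on $u_n$), shows by induction on $n$ and testing with $[\Phi(u_n)-\tilde\omega]_+$ that $\Phi(u_n)\leq\tilde\omega$ for all $n$, and since $\tilde\omega\in L^\infty(\Omega)$ this yields $u_n\leq\Phi^{-1}(\|\tilde\omega\|_{L^\infty})<1$ uniformly in $n$. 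The singularity acts only through the fact that $\Phi^{-1}$ maps any finite value back into $[0,1)$; it never enters as a quantitative growth rate at the $u$-level, which is why a direct cut-off test at a constant level $M$ cannot capture the uniform-in-$n$ bound. Your convergence sketch, by contrast, is essentially the paper's: interpolants, energy estimates by testing with $\Phi(u_n)$ and $v_n$, compactness for $\Phi(\hat u^\tau)$, and identification of the limit.
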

Concerning space discretisations, we also restrict ourselves to mentioning works addressing specifically System \eqref{eq: main}. In this respect, a finite difference method was used in \cite{Eberl2007finitediff}. The finite volume method was considered in \cite{HELMER2023386}, and the convergence of the space-time dicretisation scheme was proven using an entropy formulation.
Convergence results of mixed finite elements for a variation of the PDE-PDE model of biofilms were shown in \cite{alhammali2024numerical}, and for a PDE-ODE model in the context of porous media flow in \cite{cao2019error}. For the numerical results presented here, we use 
finite elements. However, the numerical analysis is done in a time-discrete, but continuous-in-space setting. Therefore, the results are independent of the chosen spatial discretisation.

\subsection{Linearisation} \label{sec: 1 - linear}

The time-discrete system \eqref{eq: time-discrete} 
is nonlinear, degenerate and singular. For approximating its numerical solution, stable iterative linearisation schemes are needed. Most of the works addressing such type of problems are focusing on a \textit{direct approach}. More precisely,  for solving \eqref{eq: time-discrete 1}, $w=\Phi(u)$ is considered as the primary unknown yielding $u=\Phi^{-1}(w)$ which is then linearised by expanding in terms of the last iterate. With reference to \eqref{eq: time-discrete 1}, this approach is convenient because there are no spatial derivatives applied to the nonlinear terms. However, this requires that $\Phi$ is an invertible function; if this is not the case, then a regularisation step is required. Alternatively, one can use $u$ as the primary variable, and avoid inverting the function $\Phi$. In this case, the nonlinearity appears under the Laplace operator, which makes the construction of robust linearisation schemes and, in particular, proving the convergence a complex task.     

Following \cite{cances2021error} here we consider instead a \textit{split formulation} involving two primary unknowns,  $u$ and $w$, which are related through the algebraic relationship $w = \Phi(u)$. We reformulate the time-discrete version of \eqref{eq: main 1} as a system of a linear elliptic equation and an algebraic one. For this reformulated system, we construct linear iterative schemes having a stable and robust behaviour. They all fit in the general framework given below. 

\begin{problem}[Linearisation scheme] \label{prob: linear}
For $i \in \N$, let $u^i_n$ be the $i^{\rm th}$ iterate of the $n^{\rm th}$ time-step, and let $u^0_n:=u_{n-1}$ be given. To compute $u^i_n$ from $u^{i-1}_n$,  first solve for $(\tilde{u}^i_n,w^i_n)$ satisfying
\begin{subequations}\label{eq: linear}
\begin{align}
&\frac{1}{\t}(\Tilde{u}^i_n-u_{n-1})=\D  w^i_n + f(v_{n-1}) \Tilde{u}^i_n, \label{eq: linear 1}\\
&L^i_n(\Tilde{u}^i_n-u^{i-1}_n)=w^i_n-\Phi(u^{i-1}_n).  \label{eq: linear 2} \end{align}
The factors $L^i_n$ will be specified below. The way they are chosen is defining the different schemes used here. Finally, we take the positive part of $\Tilde{u}^i_n$,  
\begin{align}
&u^i_n = \posi{\Tilde{u}^i_n}.
\end{align}
\end{subequations}
\end{problem}

\noindent
If $w_n^i$ and $\tilde{u}_n^i$ (and, consequently, also $u^i_n$)  converge to $w_n$ and $u_n$ respectively, then the limits satisfy  $w_n=\Phi(u_n)$, and $u_n$ is a (weak) solution of \eqref{eq: time-discrete 1}. Observe that the formulation used in \Cref{prob: linear}, where \eqref{eq: time-discrete 1} is split into a linear elliptic equation and a nonlinear algebraic one, is well suited for degenerate problems. In particular, no regularisation is needed in the slow diffusion case, i.e. when $u^i_n$ takes values for which $\Phi^\prime$ vanishes. 

As mentioned, the choice of the factors $L^i_n$ in \eqref{eq: linear 2} leads to different linearisation schemes. With $L^i_n = \Phi^\prime(u^{i-1}_n)$, one obtains the Newton Scheme (NS) in the context of the splitting formulation \eqref{eq: linear}. The convergence is guaranteed rigorously in the fully discrete case, but this depends strongly on the spatial discretisation and mesh size \cite{brenner2017improving,Park,NS3}. For time-dependent problems, since the initial guess is often the solution at the previous time step, this implies that the time-step size should be sufficiently small, which may cancel the advantages brought by the implicit discretisation. In the same category, we mention the modified Picard scheme \cite{PS1} and the J{\"a}ger--Ka{\v c}ur scheme \cite{Jager1991_92, Jager1995}, for which the linear convergence can be proven rigorously under similar conditions as for the NS. 

Ideally, one works with a scheme that is unconditionally convergent, i.e. the time-step~$\t$ does not depend on the spatial discretisation and the mesh-size~$h$, which was one of the main reasons for choosing an implicit time discretisation. This can be achieved by using the L-scheme (LS) \cite{PS2, POP2004365}, which is nothing but choosing $L^i_n = L$ (a suficiently large constant) in \eqref{eq: linear 2}. This scheme has a guaranteed but linear convergence,  irrespective of the initial guess and the spatial discretisation. These results were  extended in \cite{RaduKumar} to doubly-degenerate problems, where a H{\"o}lder continuous, not necessarily strictly increasing nonlinearity appears under the time derivative. The drawback of this scheme is its significantly slower convergence when compared to the NS (whenever the latter converges) \cite{PS2, SEUS2018331}. Improvements can be made by choosing $L$ adaptively in each step, or by performing first a number of LS steps, and then switching to the NS entirely when the iterations are close enough to the solution \cite{PS2, Stokke2023}. 

Such observations have lead to the Modified L-scheme, or M-scheme (MS) for short, as introduced in \cite{MITRA20191722}, and on which we mainly focus here. We define $L^i_n \coloneqq \max\{\Phi'(u^{i-1}_n) \\+ M\t^\g,\, 2M \t^\g\}$, for some $\g\in (0,1]$ and sufficiently large $M>0$. Hence, the MS can be viewed as a combination of the NS and the LS where the LS is a first order global method ensuring stability, while the NS is a first order local method speeding up convergence. In particular, the MS has a first order local term to speed up convergence but regularizes it with a second order global term which captures the evolution of $u_n$ to ensure stability. In an earlier work \cite{MITRA20191722}, it has been shown for the direct formulation that the MS is indeed unconditionally stable while achieving much better convergence rates than the LS. 

In this work, we apply the LS and MS to the splitting formulation in \Cref{prob: linear}, allowing us to handle systems with porous medium type degeneracies, as well as singular diffusion. Similarly to \cite{MITRA20191722}, we prove that the MS converges unconditionally and that, in the non-degenerate case, it has a contraction rate that scales with $\t$. The two main results for the LS and MS in \Cref{sec: linear} are summarised in the following theorems.

\begin{theorem*}[Convergence of the L-scheme] 
For a sufficiently small time step size $\t$ independent of the mesh size $h$, the L-scheme converges to a function $u_n$ that is the weak solution of our time-discretised \cref{eq: time-discrete 1}. In the non-degenerate case, the L-scheme converges with a contraction rate $\alpha < 1$. 
\end{theorem*}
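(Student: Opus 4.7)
The plan is to adapt the classical L-scheme contraction argument (Pop--Radu--Knabner; Canc\`es--Pop for the split formulation~\cite{cances2021error}) to \eqref{eq: linear}. Well-posedness of each iterate is routine: substituting $w^i_n = L(\tilde u^i_n - u^{i-1}_n) + \Phi(u^{i-1}_n)$ from \eqref{eq: linear 2} into \eqref{eq: linear 1} yields a symmetric, coercive linear elliptic problem for $\tilde u^i_n \in \H$, to which Lax--Milgram applies as soon as $1/\t > \|f\|_{\Linf}$; the update $u^i_n = \posi{\tilde u^i_n}$ is then defined pointwise. Denote by $u_n$ the (unique) weak solution of \eqref{eq: time-discrete 1}, set $w_n := \Phi(u_n)$, and introduce the errors $e^i_{\tilde u} := \tilde u^i_n - u_n$, $e^i_u := u^i_n - u_n$, $e^i_w := w^i_n - w_n$, $\delta^i := \tilde u^i_n - u^{i-1}_n$ and $\eta^i := \Phi(u^{i-1}_n) - \Phi(u_n)$. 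Subtracting the limit equations from \eqref{eq: linear} gives the weak error system
\begin{subequations}
\begin{align}
&\tfrac{1}{\t}(e^i_{\tilde u}, \phi) + (\nabla e^i_w, \nabla \phi) = (f(v_{n-1})\,e^i_{\tilde u}, \phi) \quad \forall\,\phi\in\H,\\
&L\,e^i_{\tilde u} - e^i_w = L\,e^{i-1}_u - \eta^i \quad \text{a.e.\ in }\Om.
\end{align}
\end{subequations}

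I would then test the first equation with $\t\, e^i_w$, use the second to rewrite $(e^i_{\tilde u}, e^i_w) = L(e^i_{\tilde u}, \delta^i) + (e^i_{\tilde u}, \eta^i)$, and invoke the polarisation identity $L(e^i_{\tilde u}, \delta^i) = \tfrac{L}{2}(\|e^i_{\tilde u}\|^2 - \|e^{i-1}_u\|^2 + \|\delta^i\|^2)$ to obtain the master identity
\begin{equation*}
\tfrac{L}{2}\bigl(\|e^i_{\tilde u}\|^2 - \|e^{i-1}_u\|^2 + \|\delta^i\|^2\bigr) + (e^i_{\tilde u}, \eta^i) + \t\|\nabla e^i_w\|^2 = \t\,(f(v_{n-1})\,e^i_{\tilde u},\, e^i_w).
\end{equation*}
Splitting $(e^i_{\tilde u}, \eta^i) = (e^{i-1}_u, \eta^i) + (\delta^i, \eta^i)$, monotonicity of $\Phi$ yields $(e^{i-1}_u, \eta^i) \ge 0$, sharpened to $c_0\|e^{i-1}_u\|^2$ in the non-degenerate case $\Phi'\ge c_0>0$; the cross term $(\delta^i, \eta^i)$ is controlled via $|\eta^i| \le L_\Phi |e^{i-1}_u|$ and Young, absorbing half of $\tfrac{L}{2}\|\delta^i\|^2$ at the cost of $\tfrac{L_\Phi^2}{L}\|e^{i-1}_u\|^2$. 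The reaction term on the right is bounded using Cauchy--Schwarz, $f$ bounded, the Poincar\'e inequality $\|e^i_w\|\le C_P\|\nabla e^i_w\|$ and Young, consuming half of $\t\|\nabla e^i_w\|^2$ and leaving $\tfrac{\t C}{2}\|e^i_{\tilde u}\|^2$ with $C := C_P^2\|f\|_{\Linf}^2$. Using the $1$-Lipschitz property of $\posi{\cdot}$ and $u_n\ge 0$ to obtain $\|e^i_u\|\le\|e^i_{\tilde u}\|$, these estimates combine into
\begin{equation*}
(L - \t C)\,\|e^i_u\|^2 \;\le\; (L + 2L_\Phi^2/L - 2c_0)\,\|e^{i-1}_u\|^2,
\end{equation*}
so that in the non-degenerate case, choosing $L > L_\Phi^2/c_0$ and $\t$ small (with all constants depending only on $L_\Phi, c_0, C_P, \|f\|_{\Linf}$, and therefore independent of $h$) forces the ratio $\alpha := (L + 2L_\Phi^2/L - 2c_0)/(L - \t C)$ to lie strictly below $1$, which is the claimed linear contraction.

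The plain convergence statement must also cover the genuinely degenerate case $c_0 = 0$, where the ratio above is $\ge 1$. There I would argue by compactness: the master identity together with a priori bounds on $(u^{i-1}_n)_i$ inherited from \Cref{prob: time-discrete} produces uniform bounds on $(\tilde u^i_n, w^i_n)_i$ in $\H \times \H$, so along a subsequence $(\tilde u^{i_k}_n, w^{i_k}_n)$ converges weakly in $\H$ and strongly in $L^2(\Om)$; passing to the limit in \eqref{eq: linear 1}--\eqref{eq: linear 2} using continuity of $\Phi$ and $\posi{\cdot}$ identifies the limit as a weak solution of \eqref{eq: time-discrete 1}, and uniqueness (from the time-discrete well-posedness result) promotes subsequential to full convergence. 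The main obstacle throughout is the simultaneous calibration of $L$ and $\t$: the L-scheme requires $L \ge L_\Phi$ so that $s \mapsto Ls - \Phi(s)$ is non-decreasing, and in the contraction part $L$ must further exceed $L_\Phi^2/c_0$; at the same time $\t$ must be small enough that the reaction-induced term $\tfrac{\t C}{2}\|e^i_{\tilde u}\|^2$ cannot swallow the coercive contribution, with all thresholds expressible in continuous-level constants and therefore independent of the spatial mesh size.
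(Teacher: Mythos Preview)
Your non-degenerate argument is correct but suboptimal, and your degenerate argument has a genuine gap.

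\textbf{Non-degenerate case.} Your splitting $(e^i_{\tilde u},\eta^i)=(e^{i-1}_u,\eta^i)+(\delta^i,\eta^i)$ forces you to control the cross term $(\delta^i,\eta^i)$ by Young, which injects the bad term $\tfrac{L_\Phi^2}{L}\|e^{i-1}_u\|^2$ on the right. This is why you need the strong condition $L>L_\Phi^2/c_0$. The paper avoids this entirely: writing $\eta^i=\bigl(\tfrac{\delta\Phi}{\delta u}\bigr)^{i-1}e^{i-1}_u$ with the divided difference $\bigl(\tfrac{\delta\Phi}{\delta u}\bigr)^{i-1}\in[\phi_\Mini,\sup\Phib']$, one rewrites
\[
L(e^i_u-e^{i-1}_u)+\eta^i=\Bigl(L-\bigl(\tfrac{\delta\Phi}{\delta u}\bigr)^{i-1}\Bigr)(e^i_u-e^{i-1}_u)+\bigl(\tfrac{\delta\Phi}{\delta u}\bigr)^{i-1}e^i_u,
\]
and applies the polarisation identity to the first term only. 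This is an \emph{exact} rearrangement (no Young), yielding
\[
\tfrac{L+\phi_\Mini}{2}\|e^i_u\|_h^2+\tfrac{\varepsilon}{2}\|\delta^i\|_h^2+\t\|\nabla e^i_w\|^2\le \tfrac{L}{2}\|e^{i-1}_u\|_h^2,
\]
with $\varepsilon=L-\sup\Phib'>0$ and $\|\cdot\|_h$ the $h_{n-1}$-weighted $L^2$ norm. The contraction rate is $\sqrt{L/(L+\phi_\Mini)}$ and only $L>\sup\Phib'$ is required.

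\textbf{Degenerate case.} Here your argument breaks. With $c_0=0$, your displayed inequality reads $(L-\t C)\|e^i_u\|^2\le(L+2L_\Phi^2/L)\|e^{i-1}_u\|^2$, a ratio strictly larger than $1$: nothing prevents $\|e^i_u\|$ from growing geometrically, so the uniform bounds you invoke for compactness are simply not available. In contrast, the paper's exact inequality above, even with $\phi_\Mini=0$, gives $\|e^i_u\|_h\le\|e^{i-1}_u\|_h$ (monotone) plus summable remainders. Telescoping yields $\sum_i\|\delta^i\|^2<\infty$ and $\sum_i\|\nabla e^i_w\|^2<\infty$, hence $\delta^i\to0$ and $w^i_n\to w_n$ in $H^1_0$. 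The algebraic relation $L\delta^i=w^i_n-\Phib(u^{i-1}_n)$ then forces $\Phib(u^{i-1}_n)\to\Phib(u_n)$ in $L^2$, and a separate lemma (exploiting that $\Phi$ is strictly increasing and convex near $0$) promotes this to $u^{i-1}_n\to u_n$ in $L^1$. No compactness is used, and none is available at this level of regularity: your claimed uniform $H^1$ bound on $\tilde u^i_n$ is not justified (indeed the paper only places $\tilde u^i_n$ in $L^2$).

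A minor point: for well-posedness you substitute $w^i_n$ into the elliptic equation and solve for $\tilde u^i_n\in H^1_0$, which tacitly requires $u^{i-1}_n\in H^1$. The paper instead eliminates $\tilde u^i_n$ and solves an elliptic problem for $w^i_n\in H^1_0$, needing only $u^{i-1}_n\in L^2$.
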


\begin{theorem*}[Convergence of the M-scheme]
For a sufficiently small time step size $\t$ independent of the mesh size $h$ and under certain boundedness conditions on the iterates, the M-scheme converges to a function $u_n$ that is the weak solution of our time-discretised solution \cref{eq: time-discrete 1}. In the non-degenerate case, the M-scheme converges with a contraction rate $\alpha <1 $, that scales with $\t^\g$ for some $\gamma\in(0,1]$.
\end{theorem*}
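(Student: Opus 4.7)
The plan is to adapt the M-scheme analysis of \cite{MITRA20191722} to the split formulation of \Cref{prob: linear}. Let $u_n$ denote the weak solution of \eqref{eq: time-discrete 1} provided by \Cref{thm: main time 1}, set $w_n:=\Phi(u_n)$, and introduce the iteration errors
\begin{equation*}
 e^i := \Tilde{u}^i_n - u_n, \qquad \hat{e}^{i-1} := u^{i-1}_n - u_n, \qquad \eta^i := w^i_n - w_n.
\end{equation*}
Subtracting the exact weak formulation of \eqref{eq: time-discrete 1} from \eqref{eq: linear 1}--\eqref{eq: linear 2} yields the error PDE
\begin{equation*}
\tfrac{1}{\t}(e^i,\varphi) + (\nabla\eta^i,\nabla\varphi) = (f(v_{n-1})\,e^i,\varphi)\qquad \forall\,\varphi\in H^1_0(\Om),
\end{equation*}
together with the pointwise algebraic identity $\eta^i = L^i_n\,e^i - (L^i_n - \Phi'(\xi^i))\,\hat{e}^{i-1}$, where, by a Taylor expansion of $\Phi$, $\xi^i$ lies between $u^{i-1}_n$ and $u_n$.

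The next step is to test the error PDE with $\varphi=\eta^i$, substitute the algebraic identity into the product $(e^i,\eta^i)$, and treat the resulting cross term by Young's inequality. After absorbing half of $\|\nabla\eta^i\|^2$ and half of $\tfrac{1}{\t}\|\sqrt{L^i_n}\,e^i\|^2$ into the left hand side, the Poincaré inequality combined with the lower bound $L^i_n\ge 2M\t^\g$ lets one also absorb the reaction contribution once $\t^{1-\g}$ is sufficiently small. What remains is a contraction estimate of the form
\begin{equation*}
\|\sqrt{L^i_n}\,e^i\|^2 \;\le\; \sup_{\Om}\frac{(L^i_n-\Phi'(\xi^i))^2}{(L^i_n)^2}\,\bigl\|\sqrt{L^i_n}\,\hat{e}^{i-1}\bigr\|^2.
\end{equation*}
A second Taylor expansion together with the definition $L^i_n=\max\{\Phi'(u^{i-1}_n)+M\t^\g,\,2M\t^\g\}$ gives $|L^i_n-\Phi'(\xi^i)|\le M\t^\g + \|\Phi''\|_\infty|\hat{e}^{i-1}|$, where $\|\Phi''\|_\infty$ is the supremum of $|\Phi''|$ on the range where $u^{i-1}_n$ and $u_n$ live, and it is precisely the boundedness hypothesis on the iterates, combined with the a priori bound on $u_n$ from \Cref{thm: main time 1}, that keeps $\xi^i$ away from any singularity of $\Phi''$. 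This produces a contraction factor $\alpha<1$; in the non-degenerate regime $\Phi'\ge c>0$ one has $L^i_n\ge c$, so that the gap $1-\alpha$ is of order $\t^\g$, as claimed.

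The contraction of $\Tilde{u}^i_n-u_n$ in the weighted norm $\|\sqrt{L^i_n}\,\cdot\|$ transfers to $u^i_n-u_n=[\Tilde{u}^i_n]_+-u_n$ via the non-expansiveness of $[\,\cdot\,]_+$ and the positivity of $u_n$ guaranteed by \Cref{thm: main time 1}, while the weights $L^i_n$ are uniformly comparable across iterations thanks again to the boundedness hypothesis. Passing to the limit $i\to\infty$ in \eqref{eq: linear 1}--\eqref{eq: linear 2}, using the strong convergence of $\Tilde{u}^i_n$ in $L^2(\Om)$ and of $\eta^i$ in $H^1_0(\Om)$, identifies the limit pair with $(u_n,w_n)$.

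The main obstacle I anticipate is the control of $\Phi'(\xi^i)$ at the intermediate point when $\Phi$ is singular at $u=1$: a uniform bound on $\Phi''$ along the segment joining $u^{i-1}_n$ and $u_n$ is not automatic. This is where the boundedness hypothesis on the iterates enters the statement, and it must be used in tandem with the uniform bound on $u_n$ strictly below the singularity from \Cref{thm: main time 1}. A secondary subtlety is tuning $M$ and $\t$ so that the $\t^\g$-shift, which delivers unconditional stability in the degenerate case, does not destroy the Newton-type acceleration that yields the claimed $\t^\g$ convergence rate.
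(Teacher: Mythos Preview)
Your overall strategy---derive error identities, test with the $w$-error, bound $|L^i_n-\Phi'(\xi^i)|$ via Lipschitz continuity of $\Phi'$ and the boundedness hypothesis, then extract a contraction by Young---is essentially the paper's route, and in the non-degenerate case it delivers the same rate $\alpha=2M\t^\g/(\phi_\Mini+M\t^\g)$ (note: it is $\alpha$ itself that is $O(\t^\g)$, not $1-\alpha$ as you wrote). However, there is a genuine gap in the degenerate case $\phi_\Mini=0$. Your contraction factor $\sup_\Om(L^i_n-\Phi'(\xi^i))^2/(L^i_n)^2$ is only $\le 1$, not strictly $<1$: at a point where $\Phi'(u^{i-1}_n)\le M\t^\g$ and $\Phi'(\xi^i)=0$ one has $L^i_n=2M\t^\g=L^i_n-\Phi'(\xi^i)$, so the ratio equals $1$. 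The paper does \emph{not} obtain a contraction here. Instead, it rewrites the cross term using $a(a-b)=\tfrac12(a^2-b^2+(a-b)^2)$, which yields the additional nonnegative term $\tfrac{\varepsilon}{2}\int h_{n-1}|e^i_u-e^{i-1}_u|^2$ with $\varepsilon\ge(M-M_0)\t^\g>0$; summing over $i$ then gives $\sum_i\|\nabla e^i_w\|^2<\infty$ and $\sum_i\|e^i_u-e^{i-1}_u\|^2<\infty$, whence $w^i_n\to w_n$ in $H^1_0(\Om)$, $\Phib(u^i_n)\to w_n$ via \eqref{eq: linear weak 2}, and finally $u^i_n\to u_n$ in $L^1(\Om)$ by \Cref{lemma:imprtnt_conv}. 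A pure Young estimate cannot reproduce this telescoping argument.

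A secondary issue is that your weighted norm $\|\sqrt{L^i_n}\,\cdot\|$ depends on $i$ through $L^i_n=L^i_n(u^{i-1}_n)$. Iterating the one-step bound requires comparing $\|\sqrt{L^i_n}\,\hat e^{i-1}\|$ with $\|\sqrt{L^{i-1}_n}\,e^{i-1}\|$, and since $L^i_n$ can be as small as $2M\t^\g$ while $|L^i_n-L^{i-1}_n|$ is of the same order, the ratio $L^i_n/L^{i-1}_n$ is bounded only by a constant like $1+M_0/M$, which erodes the rate at every step. The paper avoids this, and simultaneously your separate absorption of the reaction term, by folding $f(v_{n-1})$ into the $i$-independent weight $h_{n-1}=1-\t f(v_{n-1})$: one tests the error PDE with $e^i_w$ and the algebraic error relation with $h_{n-1}e^i_u$, adds, and obtains the contraction directly in the fixed norm \eqref{eq:cond_convM}. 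Finally, in the singular case $b=1$ one must work with the regularisation $\Phib$ of \Cref{def: reg Phi} rather than $\Phi$ itself, so that $\|\Phib'\|_{\rm Lip}$ is finite and the key two-sided bound $0\le L^i_n-\Phib'(\xi^i)\le 2M\t^\g$ of \Cref{lemma: inequalities} holds globally.
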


 The outline of our paper is as follows: In  \Cref{sec: prelim} we provide the required functional setting and background and state the structural assumptions. In \Cref{sec: time-discrete} we prove the results for the time discretisation and in \Cref{sec: linear} we show the convergence results for both the LS and the MS. In \Cref{sec: alg} we perform numerical simulations and compare the performances of the NS and MS for a porous medium equation and both cases of the biofilm model (PDE-PDE system and PDE-ODE system). 
Finally, in \Cref{sec: concl and disc} we summarize our results and discuss potential future research.

\section{Preliminaries} \label{sec: prelim}

\subsection{Functional setting and background}
Let $\Om \subset \R^d$ be a bounded Lipschitz domain. The corresponding $L^2$ inner product and norm are denoted by $\brac{\cdot, \cdot}$ and $\norm{\cdot}$, and norms with respect to other Banach spaces $V$ by $\norm{\cdot}_V$. 
By $\left(W^{1,p}(\Om),\|\cdot\|_{W^{1,p}(\Om)}\right)$, $1\leq p < \infty$, we denote the Sobolev spaces
and use the short-hand notation $H^1(\Om) \coloneqq W^{1,2}(\Om)$. 
The space $H^1_0(\Om)$ is the closure of $C^{\infty}_c(\Om)$ in $H^1(\Om)$, which is equipped with the equivalent norm $\norm{u}_{H^1_0(\Om)} \coloneqq \norm{\nabla u}$ due to the Poincar\'e inequality 
\begin{equation}\label{eq:Poincare}
    \norm{u} \leq C_{\Om} \norm{\nabla u} \quad \text{for all } u \in H^1_0(\Om), 
\end{equation}
where $C_\Omega>0.$
The dual space of $\H$ is denoted by $H^{-1} \coloneqq \brac{\H}^\ast$ with the norm
\begin{equation}
    \norm{u}_{H^{-1}\Om} \coloneqq \sup_{\phi \in \H} \frac{\langle u,\phi\rangle}{\norm{\nabla \phi}},
\end{equation}
where $\langle\cdot,\cdot\rangle$ denotes the duality paring.
Since we consider homogeneous Dirichlet boundary conditions, we will mainly use $H^1_0(\Om)$. Lastly, we consider the Bochner spaces $L^p(0,T;V)$, with $V$ a Banach space, equipped with the norm 
\begin{equation}
    \norm{u}_{L^p(0,T;V)} \coloneqq \brac{\int_0^T \norm{u(t)}_{V}^p dt}^{1/p} < \infty.
\end{equation}

We will frequently use Young's inequality
\begin{equation} \label{eq: Young}
    uv \leq \frac{1}{2\r}u^2 + \frac{\r}{2}v^2, \quad \text{for }\r>0 \text{ and } u,v \in \R,
\end{equation}
the Cauchy-Schwarz inequality 
\begin{equation}
    \left|\int_\Om uv \right| \leq \norm{u} \norm{v},\qquad  u,v\in L^2(\Om), \label{eq: cauchy schwarz}
\end{equation}
and the discrete Gronwall Lemma: Let $\{u_n\}_{n\in \N}, \{a_n\}_{n\in \N},\{b_n\}_{n\in \N}$ be non-negative sequences such that $u_n \leq a_n + \sum_{k=1}^{n-1}b_k u_k$, then
\begin{equation} \label{eq: discr Gronwall}
    u_n \leq a_n + \sum_{k=1}^{n-1}a_k b_k \exp\left(\sum_{k<j<n} b_j\right).
\end{equation}

Lastly, for convex functions $\Psi \in C\brac{\R^+}$ with $\Psi(0) = 0$, we have the following inequalities
\begin{subequations}
    \begin{align}
        &\text{Jensen's inequality:} \quad \Psi\brac{\frac{1}{|\Om|}\int_{\Om}|f|} \leq \frac{1}{|\Om|}\int_{\Om}\Psi(|f|) \quad \text{for } f\in L^2(\Om), \label{eq: jensen}\\
        &\text{Super-additivity:} \quad  \Psi(a) + \Psi(b) \leq \Psi(a+b) \quad  \text{for all } a,b \geq 0. \label{eq: superadd}
    \end{align}
\end{subequations}

To denote the positive and negative part we write $\posi{u} \coloneqq \max\{0, u\}$ and $\nega{u} \coloneqq \min\{0, u\}$ and use the notation $a \lesssim b$ if $a \leq C b$ for some constant $C>0$. 
Moreover,  $\R^+=\{x \in \R \; \colon \; x \geq 0\}$ and $\R^+_\ast=\{x \in \R \; \colon \; x > 0\}$. In some proofs we will also use the notation 
\begin{equation}
    I(a,b) = \{x \; \colon \; \min\{a,b\} \leq x \leq \max\{a,b\} \}, \qquad a,b\in\R.
\end{equation}
If  $u,v$ are two given functions then $I(u,v)$ should be considered pointwise a.e. 

Finally, throughout this work, $C>0$ will denote a generic constant that might change in each occurrence and from line to line but will always be independent of $\t$.

\subsection{Structural assumptions} \label{sec: assump}
Dependng on the function $\Phi$ in \eqref{eq: biof 1}, the problems may be singular-degenerate, or have a structure resembling the porous medium equations. To cover both cases, we introduce a maximum density/concentration value $b$, where $b = 1$ in the former case (the biofilm model) and $b = \infty$ for porous medium type equations. 

With $b$ as above, We make the following structural assumptions for \Cref{eq: main}
\begin{enumerate}[label=(P\theAssBio)]
\setlength\itemsep{-0.2em}
\item $\Phi \colon [0,b)\to \R^+$ is an increasing function with locally Lipschitz continuous derivatives, satisfying  
\begin{align*}
& \quad \Phi(0)=0, \quad \lim_{u\nearrow b}\Phi(u)=\infty, \text{ and } \Phi'(u)>0  \text{ for } u\in (0,b),
\end{align*}
with $\inf_{[0,b]}\Phi'=:\phi_\Mini\in [0,\infty)$, and $\sup_{[0,b]}\Phi'=:\phi_\Maxi\in (0,\infty]$. We furthermore require that $\Phi^{\prime}$ is strictly increasing in $[0,\varepsilon_0)$ for some $\varepsilon_0 \in (0,b)$.

\label{ass: 1} \stepcounter{AssBio}

\item $f\colon\R^+ \to \R$ is Lipschitz continuous and bounded, with $\|f\|_\infty = f_\Maxi$ for some constant $f_\Maxi\geq 0$. $g\colon[0,b)\times\R^+ \to\R$ is   Lipschitz continuous with Lipschitz constant $g_\Maxi>0$. Moreover, we assume that $g(\cdot,0)\geq 0$. 
\label{ass: 2} \stepcounter{AssBio}
\end{enumerate}

\begin{enumerate}[label=(P\theAssBio)]
\setlength\itemsep{-0.2em}

\item $D\colon[0,b)\to \R$ is a  continuous function. There exist constants $D_\Mini, D_\Maxi$ s.t. $0<D_\Mini \leq D(u) \leq D_\Maxi<\infty$ for all $u \in [0, b)$. \label{ass: 3} \stepcounter{AssBio}
\end{enumerate}

The functions $\Phi$, $f$, $g$, $D$ are extended for arguments $u\in \R^-$ by their values at $u=0$. If $b=1$, the functions that are bounded at $u=b$ are also extended for $u \in [b,\infty)$ by their values at $u=b$.

\begin{remark}[Validity of the assumptions \ref{ass: 1} - \ref{ass: 3}]
The biofilm models \cite{EBERLPDEODE, EBERLPDEPDE} (see Equation \eqref{eq: PDE-PDE ex num} in Section \ref{sec: alg}) and the porous medium equation satisfy the assumptions \ref{ass: 1} - \ref{ass: 3}, with $b=1$ and $b = \infty$ respectively. Note that we only consider non-negative solutions as $u$ and $v$ denote densities and/or concentrations. 
\end{remark}

For the initial data we assume the following.
\begin{enumerate}[label=(P\theAssBio)]
\setlength\itemsep{-0.2em}
\item The initial conditions $u_{0}, v_{0} \colon\Om \to [0, \infty)$ are s.t. $v_{0} \in L^2(\Om)$, $u_{0} \in L^\infty(\Om)$ and $\|u_{0}\|_{\infty} < b$.  
\label{ass: 4}\stepcounter{AssBio}
\end{enumerate}
Finally, for the ease of presentation we consider homogeneous Dirichlet boundary conditions for $u$, and also for $v$ if $\mu=1$. The results here can be the extended to mixed Dirichlet-Neumann boundary conditions, following the ideas in \cite{HISSINKMULLER,mitra2023wellposedness}.

\subsection{Weak formulation of the time continuous problem} \label{sec: time cont}

\noindent 
We consider weak solutions of \Cref{eq: main} with homogeneous Dirichlet boundary conditions and initial data satisfying \ref{ass: 4}.
\begin{definition}[Weak formulation]
A weak solution of \eqref{eq: main} is a pair $(u,v)\in \\ C([0,T];L^2(\Om))^2 \allowbreak
\cap \, H^1(0,T;H^{-1}(\Om))^2$ s.t. $(\Phi(u),\mu v)\in L^2(0,T;\H)^2$, $(u,v)(0)=(u_0,v_0)$, and 
\begin{subequations}
\begin{align}
    \int_0^T\left\langle \p_t u,\phi\right\rangle +\int_0^T (\nabla \Phi(u), \nabla \phi) &= \int_0^T (f(v)u, \phi) 
    , \nonumber \\
    \int_0^T\left\langle \p_t v, \eta\right\rangle  + \int_0^T \mu(D(u) \nabla v, \nabla \eta) &= \int_0^T (g(u, v), \eta) 
     \nonumber
\end{align}
\end{subequations}\label{def:weak_sol}
hold for all $\phi, \eta \in L^2(0,T;\H)$.  
\end{definition}

For the well-posedness of Problem \eqref{eq: main} with $b=\infty$ and $\mu=1$,  we refer to \cite{alt1983quasilinear,otto1996l1}. If $\mu=0$, the existence and uniqueness of solutions for the coupled system follow by $L^1$-contraction similarly as in \cite{mitra2023wellposedness}. Well-posedness results for the system with $b=1$ and either Dirichlet or mixed Dirichlet-Neumann boundary conditions were obtained in \cite{ HISSINKMULLER, mitra2023wellposedness}. In particular, uniqueness can be shown if $D$ in \ref{ass: 3} is independent of $u$ \cite{HISSINKMULLER} or if $\mu=0$ \cite{mitra2023wellposedness}. Under these assumptions, local well-posedness was also shown for homogeneous Neumann boundary conditions in \cite{mitra2023wellposedness}. 
Furthermore, it was shown that solutions are non-negative, and if \ref{ass: 4} holds, then the solution $u$ is bounded by a constant strictly less than $1$, i.e. the singularity in the diffusion coefficient is not attained.
The local H\"older continuity of solutions of such systems was studied in \cite{HISSINKMULLER2}. 
In this direction,  we also mention \cite{efendiev2009existence} 
where the specific  PDE-PDE  biofilm model \cite{EBERLPDEPDE} (corresponding to $\mu = 1$ with diffusion coefficient \eqref{eq: biof 1}) was analyzed, 
\cite{mitra2021existence} where the existence of solutions for a similar degenerate PDE-ODE system was studied, and \cite{barrett2007existence} where a doubly degenerate PDE-PDE system was analysed.

Lastly, we remark that, for simplicity, we assume homogeneous Dirichlet boundary conditions. Extending the results to mixed Dirichlet-Neumann boundary conditions is possible following the arguments in \cite{HISSINKMULLER, mitra2023wellposedness}.

\section{Time discretisation} \label{sec: time-discrete}
\noindent In this section we analyse the following weak form of the time discretised system \eqref{eq: time-discrete}. We first state it in a weak form.  

\begin{problem*}[Weak formulation of the time-discretised system]
Let $n \in \{1, \dots, N\}$ and $u_{n-1},\,v_{n-1}\in L^2(\Om)$ given. Find 
$(u_n,v_n) \subset L^2(\Om)^2$ such that  $\Phi(u_n),\, \mu v_n \in \H$, and for all $\phi, \eta \in \H$ it holds 
\begin{subequations} \label{eq: time-discrete weak system}
    \begin{align}
        \left(\frac{1}{\t}(u_n - u_{n-1}),\phi\right) + (\nabla \Phi(u_n), \nabla \phi) &= (f(v_{n-1})u_n, \phi)  
        , \label{eq: time-discrete weak 1} \\
        \left(\frac{1}{\t}(v_n - v_{n-1}), \eta\right) +  \mu(D(u_n) \nabla v_{n}, \nabla \eta) &= (g(u_n, v_{n-1}), \eta) 
        . \label{eq: time-discrete weak 2}
    \end{align}
\end{subequations}
\end{problem*}

Throughout this paper, we write $w_n=\Phi(u_n)$ and use the shorthand notation
\begin{align}\label{eq:def_hn}
    h_{n-1}\coloneqq 1-\t f(v_{n-1}).
\end{align}
Note that $h_{n-1}$ is positive if $\t<1/f_\Maxi$. 

\begin{remark}[The decoupling of the equations]\label{rem: decouple equations}
    Observe that the solution $u_n\in L^2(\Om)$ of \eqref{eq: time-discrete weak 1} does not depend on the solution $v_n\in L^2(\Om)$ of \eqref{eq: time-discrete weak 2}. Hence, the system \eqref{eq: time-discrete weak system} can be solved sequentially, i.e. we first solve the nonlinear problem \eqref{eq: time-discrete weak 1} and subsequently the linear problem \eqref{eq: time-discrete weak 2}.
\end{remark}

\noindent In this section we prove the following results, already briefly mentioned in \Cref{sec: 1 - time}.

\begin{theorem}[Well-posedness and boundedness of the time-discrete solutions]\label{thm: main time 1}
For $\t< 1\slash f_\Maxi$, there exists a unique weak solution $(u_n,v_n)$ of \eqref{eq: time-discrete weak system}. 
Moreover, there exist  $\t_{\rm disc} \coloneqq \min\{1/f_\Maxi, 1/g_\Maxi\} > 0$ and $\Breve{u}\in [0,b)$ independent of $n$, such that
\begin{align}\label{eq:u_Linf_bdd}
    0\leq u_n \leq \Breve{u}, \quad \text{ and } \quad  0\leq v_n \quad  \text{ a.e. in } \Om \quad  \text{ for all } 1\leq n\leq N \text{ and } \t<\t_{\rm disc}. 
\end{align}
In fact, $\Breve{u}$ is given by
\begin{align}\label{eq:uup}
    \Breve{u}= \begin{cases}\|u_0\|_{L^\infty}  \exp\brac{\frac{Tf_\Maxi}{1-\t f_\Maxi}} &\text{ if } b=\infty,\\[.5em]
\Phi^{-1}\left( \|\Phi(u_0)\|_{L^\infty} + \frac{{\rm diam}(\Om)^2}{2d} f_\Maxi\right)<1 &\text{ if } b=1,
\end{cases}
\end{align}
\end{theorem}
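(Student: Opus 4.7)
The plan is to exploit the decoupling in \Cref{rem: decouple equations} and solve the two equations in \eqref{eq: time-discrete weak system} sequentially: first the nonlinear equation \eqref{eq: time-discrete weak 1} for $u_n$, then the (essentially linear) equation \eqref{eq: time-discrete weak 2} for $v_n$ given $u_n$. For \eqref{eq: time-discrete weak 1}, I would pass to the split formulation with primary unknown $w_n := \Phi(u_n) \in \H$: the weak form rewrites as $\tfrac{1}{\t}(h_{n-1}\Phi^{-1}(w_n) - u_{n-1}, \phi) + (\nabla w_n, \nabla \phi) = 0$ for all $\phi \in \H$, with $\Phi^{-1}$ extended by $0$ on negative arguments. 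Since $h_{n-1} = 1 - \t f(v_{n-1}) > 0$ whenever $\t < 1/f_\Maxi$, this is a monotone elliptic problem, and existence of $w_n$ follows by minimizing the strictly convex, coercive functional $J(w) = \tfrac{1}{2}\|\nabla w\|^2 + \tfrac{1}{\t}\int_\Om H(w)$ with $H'(w) = h_{n-1}\Phi^{-1}(w) - u_{n-1}$; the elementary comparison $J([w_n]_+) \le J(w_n)$ forces $w_n \ge 0$, so setting $u_n := \Phi^{-1}(w_n)$ yields a weak solution with $u_n \in [0,b)$ a.e., already delivering the lower bound. Uniqueness would follow from the standard subtraction argument: testing the difference of two weak solutions with $\Phi(u_n^{(1)}) - \Phi(u_n^{(2)}) \in \H$ and combining monotonicity of $\Phi$ with $\t < 1/f_\Maxi$ to conclude $\Phi(u_n^{(1)}) = \Phi(u_n^{(2)})$, whence $u_n^{(1)} = u_n^{(2)}$ by strict monotonicity of $\Phi$ on $[0,b)$.

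For the $L^\infty$-upper bound I treat the two cases $b=\infty$ and $b=1$ via comparison with different supersolutions. In the porous-medium case $b=\infty$, I proceed by induction with $M_0 := \|u_0\|_{L^\infty}$ and $M_n := M_{n-1}/(1-\t f_\Maxi)$, so that $h_{n-1} M_n \ge M_{n-1}$. Assuming $u_{n-1} \le M_{n-1}$, I test \eqref{eq: time-discrete weak 1} against $\phi := [w_n - \Phi(M_n)]_+ \in \H$ (admissible since $w_n$ has zero trace and $\Phi(M_n) > 0$). Rearranging the reaction term into the time-derivative one produces $\tfrac{1}{\t}(h_{n-1} u_n - u_{n-1}, \phi) + \|\nabla \phi\|^2 = 0$; on $\{\phi > 0\} = \{u_n > M_n\}$ the first integrand is non-negative by the inductive hypothesis and the choice of $M_n$, forcing $\phi = 0$ and hence $u_n \le M_n$. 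Iterating and using $-\ln(1-x) \le x/(1-x)$ yields $u_n \le \|u_0\|_{L^\infty} (1-\t f_\Maxi)^{-N} \le \|u_0\|_{L^\infty}\exp(Tf_\Maxi/(1-\t f_\Maxi)) = \Breve u$.

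In the singular case $b=1$, a stationary supersolution is needed: I take $\bar w := \|\Phi(u_0)\|_{L^\infty} + f_\Maxi \psi$, where $\psi \in \H$ solves $-\Delta \psi = 1$ with homogeneous Dirichlet data and satisfies the classical ball-comparison bound $\|\psi\|_{L^\infty} \le \mathrm{diam}(\Om)^2/(2d)$; correspondingly $\bar u := \Phi^{-1}(\bar w) \le \Breve u < 1$. Assuming inductively $u_{n-1} \le \bar u$, I test with $[w_n - \bar w]_+ \in \H$ (admissible because $\bar w \ge \|\Phi(u_0)\|_{L^\infty} \ge 0$ on $\partial\Om$ while $w_n$ has zero trace). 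Splitting $\nabla w_n = \nabla(w_n - \bar w) + \nabla \bar w$ and using the weak equation for $\psi$ converts the diffusion term into $\|\nabla[w_n - \bar w]_+\|^2 + f_\Maxi \int_\Om [w_n - \bar w]_+$. On $\{w_n > \bar w\}$ one has $u_n > \bar u \ge u_{n-1}$, so the time-derivative term is non-negative, and $u_n < 1$ a.e.\ (a consequence of $w_n \in L^2(\Om)$ together with $\Phi$ diverging at $1$, \emph{independent} of the quantitative bound we are proving) allows bounding the reaction contribution by $f_\Maxi \int_\Om [w_n - \bar w]_+$. This absorbs the extra Dirichlet contribution and leaves $\|\nabla [w_n - \bar w]_+\|^2 \le 0$, which closes the induction.

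Finally, with $u_n$ in hand, \eqref{eq: time-discrete weak 2} is linear: an algebraic problem if $\mu = 0$, or a coercive elliptic problem if $\mu = 1$ (using $D(u_n) \ge D_\Mini > 0$ together with the $1/\t$ mass term), both uniquely solvable by Lax--Milgram or direct inversion. For non-negativity of $v_n$ I rewrite the equation as $\tfrac{1}{\t} v_n - \mu \nabla\cdot(D(u_n) \nabla v_n) = \tfrac{1}{\t}v_{n-1} + g(u_n, v_{n-1})$ and use $g(\cdot, 0) \ge 0$ together with the Lipschitz bound to obtain $g(u_n, v_{n-1}) \ge -g_\Maxi v_{n-1}$; the right-hand side is therefore $\ge \tfrac{1}{\t}v_{n-1}(1 - \t g_\Maxi) \ge 0$ whenever $\t < 1/g_\Maxi$, and the weak maximum principle (or direct inspection when $\mu = 0$) yields $v_n \ge 0$. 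Collecting all smallness conditions gives $\t_{\rm disc} = \min(1/f_\Maxi, 1/g_\Maxi)$. The main technical obstacle I anticipate lies in the singular case: rigorously justifying the admissibility of $[w_n - \bar w]_+$ and the integration by parts against the only weakly regular $\bar w$, and ensuring that the bound $u_n \le 1$ a.e.\ used in the reaction-term estimate comes from mere weak regularity rather than from the quantitative bound $\Breve u < 1$ one is trying to prove.
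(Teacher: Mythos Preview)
Your proposal is correct and follows essentially the same route as the paper: existence of $u_n$ via minimisation of a convex energy in the variable $w=\Phi(u)$, uniqueness by testing the difference with $\Phi(u_n^{(1)})-\Phi(u_n^{(2)})$ (the paper uses the positive part, to the same effect), the $b=\infty$ bound by induction with test function $[w_n-\Phi(M_n)]_+$, the $b=1$ bound by comparison with a Poisson supersolution, and Lax--Milgram plus a sign test for $v_n$.

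One small streamlining in your argument is worth noting. In the singular case the paper proceeds in two steps: it first shows $u_n\le 1-\delta$ for \emph{some} $\delta>0$ using a supersolution built with the crude constant $C$ from the generic bound, and only afterwards repeats the construction with $C=1$ to obtain the explicit value of $\Breve u$. Your observation that $u_n=\Phi^{-1}(w_n)<1$ a.e.\ follows immediately from $w_n\in H^1_0(\Om)$ and $\lim_{u\to 1}\Phi(u)=\infty$ lets you take $C=1$ from the outset and collapse both steps into one; the ``technical obstacle'' you flag is therefore not an obstacle at all. The admissibility of $[w_n-\bar w]_+\in H^1_0(\Om)$ is likewise routine, since $w_n$ has zero trace and $\bar w\ge 0$.
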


\begin{remark}[Computable upper bound for $u_n$]\label{rem: computable}
    Observe that \eqref{eq:uup}--\eqref{eq:u_Linf_bdd} provides a uniform upper bound for $u_n$  that is a priori computable. This will be used in \Cref{sec: linear} to show the convergence of the iterative linearisation scheme.
\end{remark}

\begin{theorem}[Convergence of the time-discrete solutions] \label{theo:time_convergence}
Let $(u,v)\in C([0,T];L^2(\Om))^2$ be the unique weak solution of \Cref{eq: main}. For a time-step size $\t=\frac{T}{N_\tau}>0$, $N_\t\in \N$, let $\{(u_n,v_n)\}_{n\in \N}\subset (L^2(\Om))^2 $ be the time-discrete solution of \eqref{eq: time-discrete weak system} with $\{w_n\}_{n\in \N}\subset \H$. Then, in addition to \ref{ass: 4}, if $u_0\in H^1_0(\Om)$, then along any sequence of $\t$ converging to 0 we have
\begin{align}\label{eq:Conv-H1}
    \sum_{n=1}^{N_\t} \int_{(n-1)\t}^{n\t} \left[\|u_n - u(t)\|^2 + \|w_n-\Phi(u(t))\|^2 + \|v_n-v(t)\|^2\right] \dd t\to 0. 
\end{align}
Moreover, if $u_0\not\in H^1_0(\Om)$, then consider  an approximation $u^\e_0\in H^1_0(\Om)$ of the initial data such that $\|u^\e_0-u_0\|\leq \e$, for fixed $\varepsilon>0$, and let $\{(u_n^\e,v_n^\e)\}_{n\in \N}\subset (L^2(\Om))^2$ be the corresponding time-discrete solutions with $\{w^\e_n \}_{n\in \N}\subset \H$. Then, along any sequence of $(\e,\t)$ converging to $(0,0)$ one has
\begin{align}\label{eq:Conv-notH1}
    \sum_{n=1}^{N_\t} \int_{(n-1)\t}^{n\t} \left[\|u^\e_n - u(t)\|^2 + \|w_n^\e-\Phi(u(t))\|^2 + \|v^\e_n-v(t)\|^2\right] \dd t\to 0. 
\end{align} 
\end{theorem}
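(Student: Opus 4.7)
The plan is to follow the classical strategy for degenerate parabolic time discretisations: introduce suitable interpolants in time, extract $\tau$-uniform a priori bounds, apply a discrete compactness lemma to pass to limits, identify the limit as a weak solution in the sense of \Cref{def:weak_sol}, and invoke the uniqueness discussed there to upgrade subsequential to full convergence. Concretely, I would define the piecewise-constant interpolants $u_\t(t) := u_n$, $w_\t(t) := w_n$, $v_\t(t) := v_n$ on $t\in((n-1)\t,n\t]$, together with the shifted version $v_\t^{-}(t) := v_{n-1}$ and the piecewise-affine interpolant $\hat u_\t(t) := u_{n-1} + \t^{-1}(t-(n-1)\t)(u_n-u_{n-1})$ (and $\hat v_\t$ analogously); then \eqref{eq: time-discrete weak system} becomes the distributional identities $\p_t\hat u_\t=\D w_\t+f(v_\t^{-})u_\t$ and $\p_t\hat v_\t=\mu\nabla\cdot(D(u_\t)\nabla v_\t)+g(u_\t,v_\t^{-})$.

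First I would derive the required a priori estimates. \Cref{thm: main time 1} already delivers $0\leq u_n\leq\breve u<b$ and $v_n\geq 0$, so in particular $w_n=\Phi(u_n)\in L^\infty(\Omega)$ with a $\t$-independent bound. Testing \eqref{eq: time-discrete weak 1} with $\t w_n$ and using the convexity inequality $(u_n-u_{n-1})\Phi(u_n)\geq\Psi(u_n)-\Psi(u_{n-1})$ with $\Psi(s):=\int_0^s\Phi(r)\,\dd r$, then summing in $n$ and absorbing the reaction term via \eqref{eq: Young}, yields
\begin{equation*}
\max_{1\leq n\leq N_\t}\int_\Om\Psi(u_n)+\sum_{n=1}^{N_\t}\t\|\nabla w_n\|^2\leq C.
\end{equation*}
Testing \eqref{eq: time-discrete weak 2} with $\t v_n$ and invoking \ref{ass: 3} together with \eqref{eq: discr Gronwall} gives $\max_n\|v_n\|^2+\mu\sum_n\t\|\nabla v_n\|^2\leq C$. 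The equations themselves then bound the discrete time increments $(u_n-u_{n-1})/\t$ and $(v_n-v_{n-1})/\t$ in $L^2(0,T;H^{-1}(\Om))$ uniformly in $\t$. When $u_0\in\H$, I would additionally test \eqref{eq: time-discrete weak 1} with $u_n-u_{n-1}$, exploit $(\nabla w_n,\nabla(u_n-u_{n-1}))\geq\int\Phi'(u_n)|\nabla u_n|^2-(\text{telescoping terms})$, and sum to obtain the sharper bound $\max_n\|\nabla w_n\|^2\leq C$, which makes the limit identification straightforward.

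The third ingredient is compactness. Since $u_\t$ is bounded in $\Lom{\infty}$ uniformly, $w_\t$ is bounded in $L^2(0,T;\H)$, and $\p_t\hat u_\t$ is bounded in $L^2(0,T;H^{-1}(\Om))$, a discrete Aubin--Lions / Alt--Luckhaus lemma adapted to the monotone nonlinearity $\Phi$ yields, along a subsequence, strong convergence $u_\t\to u^\ast$ in $L^2(Q)$; continuity of $\Phi$ on the compact interval $[0,\breve u]$ promotes this to $w_\t\to\Phi(u^\ast)$ in $L^2(Q)$, while the gradient bound gives $\nabla w_\t\rightharpoonup\nabla\Phi(u^\ast)$ weakly in $L^2(Q)^d$. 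An analogous argument (trivial when $\mu=0$ via the ODE structure and Lipschitz continuity of $g$) yields $v_\t\to v^\ast$ strongly in $L^2(Q)$. Lipschitz continuity of $f,g$ and strong convergence of the shifted $v_\t^{-}$ allow passage to the limit in each nonlinear term, so $(u^\ast,v^\ast)$ satisfies \Cref{def:weak_sol}; uniqueness then forces $(u^\ast,v^\ast)=(u,v)$, the whole family converges, and \eqref{eq:Conv-H1} follows. For the case $u_0\not\in\H$, I would run the scheme from $u_0^\e\in\H$ with $\|u_0^\e-u_0\|\leq\e$ and derive a discrete $L^2$ stability estimate by subtracting the equations for $(u_n,v_n)$ and $(u_n^\e,v_n^\e)$, testing with suitable differences, and using the monotonicity of $\Phi$ together with \ref{ass: 2}--\ref{ass: 3} to bound $\max_n\|u_n^\e-u_n\|+\max_n\|v_n^\e-v_n\|\lesssim\e$ uniformly in $\t$; a diagonal argument along any $(\e_k,\t_k)\to(0,0)$ then produces \eqref{eq:Conv-notH1}. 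The hardest step will be the compactness argument, because the natural spatial regularity lives on $w_n=\Phi(u_n)$ rather than on $u_n$ itself, and in the singular case $b=1$ one must ensure that the translation estimates used in Alt--Luckhaus degenerate gracefully near the singularity --- an issue sidestepped here thanks to the strict uniform separation $u_n\leq\breve u<1$ supplied by \Cref{thm: main time 1}.
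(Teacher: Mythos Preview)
Your overall strategy---interpolants, a priori bounds, compactness, identification of the limit, uniqueness---parallels the paper's Rothe-method proof, and your Alt--Luckhaus route to compactness for $u_\tau$ is a legitimate (and in some ways more direct) alternative to what the paper does, namely obtaining $\bar w_\tau$ bounded in $H^1(Q)$, using the compact embedding $H^1(Q)\hookrightarrow L^2(Q)$ to get strong convergence of $w$, and then transferring this to $u$ via an inversion lemma (\Cref{lemma:imprtnt_conv}). Two concrete points, however, need repair.

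First, the test function $u_n-u_{n-1}$ is not admissible in the degenerate case: only $w_n=\Phi(u_n)\in H^1_0(\Omega)$ is guaranteed, and since $\Phi'(0)=0$ one cannot conclude $u_n\in H^1(\Omega)$. The paper tests \eqref{eq: time-discrete weak 1} with $\phi=w_n-w_{n-1}\in H^1_0(\Omega)$ instead, using $(u_n-u_{n-1},\Phi(u_n)-\Phi(u_{n-1}))\geq (\tau/L_\Phi)\|\p_t\bar w_\tau\|^2$ (where $L_\Phi=\sup_{[0,\breve u]}\Phi'$) and the algebraic identity $(\nabla w_n,\nabla(w_n-w_{n-1}))=\tfrac12[\|\nabla w_n\|^2-\|\nabla w_{n-1}\|^2+\|\nabla(w_n-w_{n-1})\|^2]$ to get $\sup_n\|\nabla w_n\|^2+\sum_n\tau\|\p_t\bar w_\tau\|^2\leq C(1+\|\nabla\Phi(u_0)\|^2)$. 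Your inequality $(\nabla w_n,\nabla(u_n-u_{n-1}))\geq\int\Phi'(u_n)|\nabla u_n|^2-(\text{telescoping})$ is neither well-defined nor the right computation. That said, your own Alt--Luckhaus argument needs only $w_\tau\in L^2(0,T;H^1_0)$ and $\p_t\hat u_\tau\in L^2(0,T;H^{-1})$, both of which follow from the \emph{basic} estimates; so the sharper bound you attempt is in fact superfluous for your chosen compactness mechanism.

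Second, and more seriously, the discrete $L^2$-stability estimate $\max_n\|u_n^\e-u_n\|\lesssim\e$ does not follow from monotonicity of $\Phi$ in the degenerate regime. Subtracting the two schemes and testing with the only admissible candidate $\Phi(u_n)-\Phi(u_n^\e)$ yields control of $\int_\Omega(u_n-u_n^\e)(\Phi(u_n)-\Phi(u_n^\e))$ and of $\|\nabla(\Phi(u_n)-\Phi(u_n^\e))\|^2$, neither of which dominates $\|u_n-u_n^\e\|^2$ when $\Phi'$ vanishes; testing with $u_n-u_n^\e$ is again inadmissible. The paper avoids this entirely by working at the \emph{continuous} level: it invokes the $L^1$-contraction principle (\Cref{lemma:L1}) to prove $\|u^\e(t)-u(t)\|_{L^1(\Omega)}\to 0$ as $\e\to 0$, upgrades to $L^2$ via the uniform $L^\infty$ bound $0\leq u,u^\e\leq\breve u$, and then combines this with part one (which gives $u_n^\e\to u^\e$ because $u_0^\e\in H^1_0$) through the triangle inequality $\|u_n^\e-u\|\leq\|u_n^\e-u^\e\|+\|u^\e-u\|$. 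Your diagonal argument as written never closes this gap.
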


\noindent The proofs of Theorems \ref{thm: main time 1} and \ref{theo:time_convergence} are based on several lemmas.

\subsection{Proof of \Cref{thm: main time 1}: well-posedness, positivity, and boundedness}
\subsubsection{Existence and uniqueness}
 We first prove the existence and uniqueness of solutions of the system of equations \eqref{eq: time-discrete weak system}.

\begin{lemma}[Well-posedness for \eqref{eq: time-discrete weak system}] \label{lem: exist time-discrete}
    For $\t< 1\slash f_\Maxi$, there exists a unique weak solution of the time discretised system  \eqref{eq: time-discrete weak system}.
\end{lemma}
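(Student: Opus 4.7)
The plan is to exploit \Cref{rem: decouple equations}: since \eqref{eq: time-discrete weak 1} does not involve $v_n$, I would first establish well-posedness for the nonlinear equation in $u_n$, and then tackle the (now linear) equation \eqref{eq: time-discrete weak 2} for $v_n$. The restriction $\t<1/f_\Maxi$ enters precisely to ensure $h_{n-1}>0$ a.e., which is what will keep the reaction contribution on the ``good'' side of the nonlinear $u_n$-equation.

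For \eqref{eq: time-discrete weak 1}, the cleanest route is to pass to the Kirchhoff-type variable $w = \Phi(u_n)$. Since $\Phi\colon [0,b)\to[0,\infty)$ is a continuous strictly increasing bijection by \ref{ass: 1}, the inverse $\Phi^{-1}$ is continuous; extending by $\Phi^{-1}(w)=0$ for $w\leq 0$ gives a continuous non-decreasing function on $\R$ with $w\cdot\Phi^{-1}(w)\geq 0$ pointwise. I would then reformulate the problem as: find $w_n\in H^1_0(\Om)$ with
\[
\tfrac{h_{n-1}}{\t}\bigl(\Phi^{-1}(w_n),\phi\bigr) + (\nabla w_n,\nabla \phi) \;=\; \tfrac{1}{\t}(u_{n-1},\phi) \qquad \forall\phi\in H^1_0(\Om),
\]
and take $u_n:=\Phi^{-1}(w_n)$. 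The associated operator $A\colon H^1_0(\Om)\to H^{-1}(\Om)$ is monotone (sum of $-\Delta$ and a positive monotone Nemytskii term, using $h_{n-1}>0$), hemicontinuous (by continuity of $\Phi^{-1}$ and dominated convergence), and coercive via $\langle Aw,w\rangle \geq \|\nabla w\|^2$ combined with \eqref{eq:Poincare}; the Browder--Minty theorem then produces a unique $w_n$, hence $u_n$. Uniqueness can equivalently be verified directly by subtracting the equations for two candidates, testing with $\Phi(u_n^{(1)})-\Phi(u_n^{(2)})$, and invoking monotonicity of $\Phi$ together with $h_{n-1}>0$.

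Once $u_n\in L^\infty(\Om)$ (or at least $L^2$) is in hand, \eqref{eq: time-discrete weak 2} is linear: for $\mu=0$ it reduces to the explicit update $v_n = v_{n-1} + \t\, g(u_n,v_{n-1}) \in L^2(\Om)$, and for $\mu=1$ the bilinear form $\t^{-1}(v,\eta) + (D(u_n)\nabla v,\nabla\eta)$ is continuous and coercive on $H^1_0(\Om)$ thanks to $D(u_n)\geq D_\Mini>0$ from \ref{ass: 3}, so Lax--Milgram supplies a unique $v_n\in H^1_0(\Om)$. The main technical obstacle I anticipate is ensuring that $\Phi^{-1}(w_n)$ lies in the space required for $A$ to map into $H^{-1}(\Om)$. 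This is automatic in the singular case ($b=1$, where $\Phi^{-1}\leq 1$); in the porous-medium case ($b=\infty$) one must use either the polynomial growth of $\Phi^{-1}$ inherited from \ref{ass: 1} together with Sobolev embedding for $w_n$, or pre-truncate via $\Phi^{-1}_M:=\min(\Phi^{-1},M)$, apply Browder--Minty to the truncated problem, derive an $L^\infty$ bound on $u_n$ as anticipated in the subsequent parts of \Cref{thm: main time 1}, and then pass $M\to\infty$.
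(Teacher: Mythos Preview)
Your proposal is correct and closely parallels the paper's proof. Steps~2 and~3 (uniqueness of $u_n$ by subtracting and testing with the difference of the Kirchhoff variables, and well-posedness for $v_n$ via the explicit update when $\mu=0$ or Lax--Milgram when $\mu=1$) are essentially identical to what the paper does. The only genuine difference is in Step~1: rather than Browder--Minty, the paper obtains $w_n$ by the direct method of the calculus of variations, minimising the convex coercive energy
\[
J(w)=\int_\Om \Bigl[h_{n-1}\!\int_0^w \Phi^{-1}(s)\,\dd s + \tfrac{\t}{2}|\nabla w|^2 - u_{n-1}\,w\Bigr]
\]
over $H^1_0(\Om)$, whose Euler--Lagrange equation is precisely your reformulated problem. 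The two routes are standard and closely related (the gradient of a convex coercive functional is a monotone coercive operator). The practical advantage of the variational formulation is that it cleanly sidesteps the technical obstacle you flag: $J$ is well-defined (possibly $+\infty$) on all of $H^1_0(\Om)$ regardless of the growth of $\Phi^{-1}$, so no growth or truncation hypothesis is needed to set up the problem, and the $L^2$ bound on $u_n=\Phi^{-1}(w_n)$ is verified \emph{a posteriori} for the minimiser (splitting $\Om$ into $\{w_n\leq\e\}$ where $\Phi^{-1}$ is bounded and $\{w_n>\e\}$ where it is Lipschitz). Your truncation-and-pass-to-the-limit route would also work but is slightly less direct.
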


\begin{proof} 
\noindent \textbf{(Step 1) Existence of $u_n$:} 
\noindent As the equations are decoupled, we can first prove the existence of the solution $u_n$ of \eqref{eq: time-discrete weak 1}. To this end, we use arguments in \cite{Pop2011RegularizationSF}. We consider the function $\Psi \coloneqq \Phi^{-1}: \R^+\to [0,b)$ which satisfies
\begin{align}\label{eq:Psi}
    \Psi(0)=0, \quad \Psi'=\frac{1}{\Phi'\circ \Psi}\geq 0 
\end{align}
by \ref{ass: 1}.
Then, the energy $J\colon\H\to \R$, defined by
\begin{align}
    J(w)\coloneqq \int_\Om \left[h_{n-1}\int_0^w \Psi + \frac \t 2 |\del w|^2 -u_{n-1} w \right]
\end{align}
 is convex and coercive for $\t<1/f_\Maxi$. Hence, a minimizer $w_n\in \H$ of $J$ exists, and $u_n=\Psi(w_n)$ solves the corresponding Euler-Lagrange equation \eqref{eq: time-discrete weak 1}. Then using \ref{ass: 1}, for an arbitrary $\varepsilon\in (0,b)$, we have $0\leq \Psi\leq \Psi(\varepsilon)<\infty$ in $[0,\varepsilon]$ and $\Psi$ is Lipschitz in $(\varepsilon,b)$. Hence, by \eqref{eq:Psi}  it follows that, for a.e. $x \in \Om$ we have 
$
 0 \leq u_n^2(x) =  \Psi^2(w_n)(x) $, so $u_n$ is measurable. Integrating over $\Om$, one gets 
\begin{align*}
   0 \leq  \int_\Om u_n^2 &= \int_{\Om}\Psi^2(w_n) = \int_{\{0\leq w_n \leq \varepsilon\}}\Psi^2(w_n) + \int_{\{w_n >\varepsilon\}}\Psi^2(w_n) \\
    &\leq \Psi(\varepsilon)^2|\Om| + C \left(1+\norm{w_n}^2\right) \leq C + C \norm{\del w_n}_{L^2(\Om)}^2,
\end{align*}
for some constant $C>0$, where, in the last estimate, we used the Poincar\'e inequality \eqref{eq:Poincare}. Since $w_n \in \H$, this shows that the integral is finite, so $u_n\in L^2(\Om)$.

\textbf{(Step 2) Uniqueness of $u_n$:}
 Assume that for a given $v_{n-1}\in L^2(\Om)$, there are two solutions $u_n,\, \Tilde{u}_n\in L^2(\Om)$  of \eqref{eq: time-discrete weak 1} with $w_n=\Phi(u_n)$ and $\tilde w_n=\Phi(\Tilde{u}_n)$ in $H^1_0(\Om)$. For their difference we obtain
\begin{align}
   \frac{1}{\t} (h_{n-1}(u_n- \Tilde{u}_n),\f) + (\del (\Phi(u_n)-\Phi(\Tilde{u}_n)),\del \f) = 0 \qquad \forall \f\in H^1_0(\Om).\label{eq: Uniqueness eq1}
\end{align}
Note that $\phi = \posi{\Phi(u_n) - \Phi(\Tilde{u}_n)} \in \H$, see e.g.  \cite{cohn2013measure}. Choosing $\phi$ as a test function in \eqref{eq: Uniqueness eq1}  
leads to
\begin{equation}\label{eq: Uniqueness eq2}
    \frac{1}{\t}\brac{h_{n-1}\brac{u_n - \Tilde{u}_n}, \posi{\Phi(u_n) - \Phi(\Tilde{u}_n)}} + \norm{\nabla \posi{\Phi(u_n) - \Phi(\Tilde{u}_n)}}_{L^2(\Om)}^2 = 0.
\end{equation}
As $\Phi$ is an increasing function, we note that $\brac{u_n - \Tilde{u}_n}\posi{\Phi(u_n) - \Phi(\Tilde{u}_n)} \geq 0$. Hence, both terms in \eqref{eq: Uniqueness eq2} are non-negative and therefore, have to be equal to 0. We conclude that $\norm{\nabla \posi{\Phi(u_n) - \Phi(\Tilde{u}_n)}}_{L^2(\Om)}^2 = 0$, which results in $\norm{\posi{\Phi(u_n) - \Phi(\Tilde{u}_n)}}_{L^2(\Om)}^2 = 0$ by the Poincar\'e inequality \eqref{eq:Poincare}. This implies that $\Phi(\Tilde{u}_n) \geq \Phi(u_n)$ a.e. in $\Om$, but as $\Phi$ is an increasing function, we also find that $\Tilde{u}_n \geq u_n$ a.e. in $\Om$. Due to the symmetry in the arguments, it follows in the same way  that $u_n\geq \Tilde{u}_n$ a.e. which implies that $u_n=\Tilde{u}_n$ a.e. in $\Om$.
 The uniqueness of $u_n$ can also be shown via the $L^1$-contraction principle \cite{vazquez2007porous}. 

\textbf{(Step 3) Existence-uniqueness of $v_n$:} 
\noindent We now prove the existence and uniqueness for the solution $v_n$ of \eqref{eq: time-discrete weak 2}. In the PDE-ODE case, i.e. $\mu = 0$, we have an explicit expression for $v_n$, 
\begin{equation}
    v_n = v_{n-1} + \t g(u_n, v_{n-1}).
\end{equation}
Hence, the existence and uniqueness of $v_n$ follows from the existence and uniqueness of $u_n$. 

 In the PDE-PDE case, i.e. $\mu =1$, existence and uniqueness follows from the Lax-Milgram theorem \cite{evans10}. Indeed, the weak form can be rewritten as
\begin{equation}
    a(v_n, \eta) =  l(\eta) \quad \forall \eta \in \H,
\end{equation}
where the bilinear form $a\colon\H\times \H\to \R$ is given by $a(v_n, \eta) = (v_{n}, \eta) + \t \mu(D(u_n) \nabla v_{n}, \nabla \eta)$. It is  bounded and coercive since $0<D_\Mini\leq D\leq D_\Maxi<\infty$ by \ref{ass: 3}. Moreover, $l(\eta) = (\t g(v_{n-1},u_n) + v_{n-1}, \eta)$ is a bounded linear functional on $\H$. Consequently, there exists a unique solution $v_n\in \H$, which concludes the proof.
\end{proof}

\subsubsection{Positivity and boundedness in $L^\infty(\Om)$}

For the time-continuous biofilm models it was shown in \cite{HISSINKMULLER,mitra2023wellposedness} that the solutions $u$ and $v$ are non-negative, and that $u<1$. We aim to prove that these properties also hold for the time-discrete solutions. First, we derive bounds for $u_n$ in the general case, i.e. including porous medium type diffusion.

\begin{lemma}[Positivity and boundedness of $u_n$] \label{lem: un-bdd-pos}
     Let $u_{n-1}\in L^\infty(\Om)$ be positive a.e. in $\Om$. Then for $\t< 1\slash f_Maxi$, the solution $u_n\in L^2(\Om)$ of \eqref{eq: time-discrete weak 1} is positive and bounded a.e. in $\Om$. More precisely, we have
     \begin{equation}
    0\leq u_{n} \leq \sup \left\{ \frac{u_{n-1}}{1- \t f(v_{n-1})} \right\} \quad  \text{ a.e. in } \Om \label{eq: un bound}
\end{equation}
for all $1\leq n\leq N.$
\end{lemma}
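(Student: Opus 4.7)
I would handle the two assertions separately, keeping in mind that $u_n$ only lies in $L^2(\Om)$, whereas $w_n\coloneqq\Phi(u_n)\in\H$; any comparison must therefore be realised through admissible test functions built from $w_n$ rather than from $u_n$. With $h_{n-1}=1-\t f(v_{n-1})\geq 1-\t f_\Maxi>0$ under $\t<1/f_\Maxi$, the weak formulation reads $\brac{h_{n-1}u_n,\phi}+\t(\nabla w_n,\nabla\phi)=\brac{u_{n-1},\phi}$ for all $\phi\in\H$.

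Positivity $u_n\geq 0$ comes directly from the construction in Step 1 of \Cref{lem: exist time-discrete}: the unique solution is produced as $u_n=\Psi(w_n)$, where $\Psi=\Phi^{-1}$ is naturally extended by $0$ on $(-\infty,0)$ and maps $\R$ into $[0,b)$; hence the constructed $u_n$ is automatically in $[0,b)$ a.e., and by uniqueness (Step 2) this is the only solution. For the upper bound, set $M\coloneqq\sup_\Om\{u_{n-1}/h_{n-1}\}$, which is finite and non-negative because $u_{n-1}\in L^\infty(\Om)$ is non-negative and $h_{n-1}\geq 1-\t f_\Maxi>0$. In the singular case $b=1$ with $M\geq 1$ the claim is trivial, since $u_n<b=1\leq M$ has already been established; so I may assume $M<b$ and hence $\Phi(M)<\infty$. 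The test function I would use is $\phi\coloneqq\posi{w_n-\Phi(M)}$, which belongs to $\H$ because $w_n\in\H$ and $\Phi(M)\geq 0$ is a constant, so the trace of $w_n-\Phi(M)$ on $\partial\Om$ equals $-\Phi(M)\leq 0$ and its positive part vanishes there. Inserting $\phi$ in the weak equation and subtracting $\brac{h_{n-1}M,\phi}$ from both sides yields
\begin{equation*}
\brac{h_{n-1}(u_n-M),\phi}+\t\norm{\nabla\phi}^2=\brac{u_{n-1}-h_{n-1}M,\phi}.
\end{equation*}
The right-hand side is non-positive by the definition of $M$, and on the set $\{w_n>\Phi(M)\}$ the strict monotonicity of $\Phi$ on $[0,b)$ combined with $u_n\in[0,b)$ forces $u_n>M$, so the first term on the left-hand side is non-negative, and the second is obviously non-negative. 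Both sides must therefore vanish, giving $\norm{\nabla\phi}=0$; Poincar\'e's inequality \eqref{eq:Poincare} then yields $\phi=0$ a.e., i.e.\ $w_n\leq\Phi(M)$ a.e. Monotonicity of $\Phi$ together with $u_n\geq 0$ finally delivers $u_n\leq M$ a.e.

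The central obstacle I expect is this mismatch in regularity: since $u_n\in L^2(\Om)$ only, a Stampacchia truncation applied directly to $\posi{u_n-M}$ is not admissible, and the usual linear-parabolic maximum-principle shortcut is therefore unavailable. The workaround is precisely to lift the comparison to $w_n\in\H$ and transport the resulting pointwise inequality back to $u_n$ through the strict monotonicity of $\Phi$ on $[0,b)$. This is also why positivity has to be settled first, via the existence construction: because $\Phi\equiv 0$ on $(-\infty,0]$, one cannot read ``$u_n\leq M$'' off of ``$w_n\leq\Phi(M)$'' on any set where $u_n$ could drop below zero, and a direct test-function argument at the level of $u_n$ cannot access that region.
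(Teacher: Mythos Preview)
Your argument for the upper bound is the paper's proof: both test with $\phi=[\Phi(u_n)-\Phi(M)]_+$ (you write $w_n$ for $\Phi(u_n)$), exploit the sign structure to force $\nabla\phi=0$, and conclude via Poincar\'e. The only departure is in how positivity is obtained: the paper runs the same test-function trick with $a=0$ and $\phi=[\Phi(u_n)]_-$, whereas you invoke the existence construction $u_n=\Psi(w_n)\in[0,b)$ from \Cref{lem: exist time-discrete} together with uniqueness. Both routes are valid; yours has the merit of making explicit why $u_n\geq 0$ must be secured \emph{before} the inequality $w_n\leq\Phi(M)$ can be inverted to $u_n\leq M$, given that $\Phi$ is not strictly monotone outside $[0,b)$.
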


\begin{proof}
    To prove that $u_n$ is bounded from above, we use the test function $\posi{\Phi(u_n) - \Phi(a)}$, for some $a \in \R^+$ in \eqref{eq: time-discrete weak 1}. Note that 
    $[\Phi(u_n) - \Phi(a)]_\pm\in H^1(\Omega)$ and
    $\left[\Phi(u_n) \mp \Phi(a)\right]_\pm$ = 0 if $u_n = 0$ as $\Phi(0) = 0$, and thus $\left[\Phi(u_n) \mp \Phi(a)\right]_\pm \in H^1_0(\Om)$. We find
\begin{align}
           &\int_\Om h_{n-1} \brac{u_n -a}\posi{\Phi(u_n) - \Phi(a)} + \int_\Om h_{n-1}\brac{a-\frac{1}{h_{n-1}}u_{n-1}}\posi{\Phi(u_n) - \Phi(a)} \nonumber\\
        &= -\t \int_{\Om} \nabla \Phi(u_n) \cdot \nabla \posi{\Phi(u_n) - \Phi(a)} = -\t \int_{\Om} \nabla \posi{\Phi(u_n) - \Phi(a)}^2 \leq 0. \label{eq:PositivityTest}
\end{align}
 Let $ \t < 1/f_\Maxi $ and $a \coloneqq \sup \frac{1}{h_{n-1}}u_{n-1}$, which implies that $a \geq 0$ as $h_{n-1}, u_{n-1} \geq 0$. 
 Then, the second term on the left hand side is positive. The first term is also positive as $\brac{u_n -a}\posi{\Phi(u_n) - \Phi(a)} \geq 0$ since $\Phi$ is increasing. 
 We conclude that the inequality in \eqref{eq:PositivityTest} must be an equality. This is only possible if $\posi{\Phi(u_n) - \Phi(a)} = 0$, and thus $\Phi(u_n) \leq \Phi(a)$. As $\Phi$ is an increasing function, this implies that $u_n \leq a$ and hence, 
\begin{equation}\label{eq: iterative upperbound un}
    u_n \leq a = \sup \left\{ \frac{1}{h_{n-1}}u_{n-1} \right\} = \sup \left\{ \frac{u_{n-1}}{1-\t f(v_{n-1})} \right\}.
\end{equation}
 We use the same arguments to prove that $u_n \geq 0$, but this time with $a = 0$ i.e. $\Phi(a)=\Phi(0)=0$. Using the test function $\phi = \nega{\Phi(u_n)}$ we conclude that $\nega{\Phi(u_n)} = 0$, and thus $u_n \geq 0$.
\end{proof}

An explicit upper bound for $u_n$ can be given in terms of the initial conditions and $f_\Maxi$, which 
we provide in the following result.

\begin{lemma}[Explicit upper bound $u_n$]\label{lem: expl upperbound un}
\noindent Let $u_0\in L^\infty(\Om)$ satisfy assumption \ref{ass: 4}. Then, for the sequence $\{(u_n,v_n)\}_{n=1}^N \subset L^2(\Om)^2$ solving \eqref{eq: time-discrete weak system}, one has

\begin{equation}\label{eq:LinfBound1}
     \|u_{n}\|_{L^{\infty}(\Om)} \leq \|u_{0}\|_{L^{\infty}(\Om)} \exp\brac{\frac{n\t f_\Maxi}{1-\t f_\Maxi}}.
\end{equation}
\end{lemma}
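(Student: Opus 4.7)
The plan is to derive the bound by iterating the one-step estimate from \Cref{lem: un-bdd-pos} and then converting the resulting product into an exponential via an elementary inequality. Since \Cref{lem: un-bdd-pos} applies to $u_n$ whenever $u_{n-1}\in L^\infty(\Om)$ is nonnegative a.e., the first thing I would do is verify inductively that the hypotheses of that lemma are satisfied at every time step. The assumption \ref{ass: 4} gives $u_0 \in L^\infty(\Om)$ with $u_0 \geq 0$, and \Cref{lem: un-bdd-pos} preserves both nonnegativity and the $L^\infty$ bound obtained from \eqref{eq: un bound}, so an induction on $n$ keeps us in the regime where the lemma is applicable.

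Next, I would use \ref{ass: 2}, which guarantees $|f(v_{n-1})|\leq f_\Maxi$, to deduce that
\begin{equation*}
    1 - \t f(v_{n-1}) \geq 1 - \t f_\Maxi > 0 \qquad \text{a.e.\ in } \Om,
\end{equation*}
provided $\t < 1/f_\Maxi$. Substituting this lower bound into \eqref{eq: un bound} yields the clean one-step recursion
\begin{equation*}
    \|u_n\|_{L^\infty(\Om)} \leq \frac{\|u_{n-1}\|_{L^\infty(\Om)}}{1-\t f_\Maxi}.
\end{equation*}
Iterating this inequality from $n$ down to $0$ gives $\|u_n\|_{L^\infty(\Om)} \leq \|u_0\|_{L^\infty(\Om)} (1-\t f_\Maxi)^{-n}$.

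The final step is to show $(1-\t f_\Maxi)^{-n} \leq \exp\!\brac{\tfrac{n\t f_\Maxi}{1-\t f_\Maxi}}$. Taking logarithms, this reduces to the elementary inequality $-\log(1-x)\leq \tfrac{x}{1-x}$ for $x\in (0,1)$, which I would justify by setting $g(x):=\tfrac{x}{1-x}+\log(1-x)$, noting $g(0)=0$ and computing
\begin{equation*}
    g'(x) = \frac{1}{(1-x)^2}-\frac{1}{1-x} = \frac{x}{(1-x)^2} \geq 0,
\end{equation*}
so $g$ is nondecreasing on $[0,1)$ and hence nonnegative. Applying this with $x=\t f_\Maxi$ yields \eqref{eq:LinfBound1}. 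There is no substantive obstacle here; the only subtle point is to make sure that the one-step bound from \Cref{lem: un-bdd-pos} is applied with the sharp constant $f_\Maxi$ rather than the pointwise value of $f(v_{n-1})$, so that the recursion has a deterministic and $n$-independent factor that can be iterated.
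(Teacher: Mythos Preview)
Your proposal is correct and follows essentially the same approach as the paper: iterate the one-step bound from \Cref{lem: un-bdd-pos} using $f(v_{n-1})\leq f_\Maxi$, then convert $(1-\t f_\Maxi)^{-n}$ to the exponential via the elementary inequality $1/(1-x)\leq \exp\!\bigl(x/(1-x)\bigr)$ for $x\in(0,1)$. The paper states this last inequality without proof, whereas you supply a short derivative argument; otherwise the two proofs are the same.
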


\begin{remark}[Upper bound as $\t\to 0$]    
Assuming that $u_{n}\to u(t)$ in $L^\infty(\Om)$ when $\t=t/n \to 0$,  the upper bound  \eqref{eq:LinfBound1} implies that 
\begin{equation}
    \|u(t)\|_{L^\infty(\Om)} \leq \|u_0\|_{L^\infty(\Om)}\exp\brac{tf_M}.
\end{equation}
\end{remark}
\begin{proof}
By \eqref{eq: iterative upperbound un} in the proof of \Cref{lem: un-bdd-pos}, we have using $\frac{1}{(1-x)}\leq \exp(\frac{x}{1-x})$ for $|x|<1$ that,
\begin{equation*}
\begin{split}
    \|u_{n}\|_{L^{\infty}(\Om)} &\leq  \sup \left\{ \frac{u_{n-1}}{1- \t f(v_{n-1})} \right\} \leq \frac{\|u_{n-1}\|_{L^{\infty}(\Om)}}{1- \t f_\Maxi}\leq \frac{\|u_{0}\|_{L^{\infty}(\Om)}}{\left(1- \t f_\Maxi\right)^n} \\
    &\leq \|u_{0}\|_{L^{\infty}(\Om)} \exp\brac{\frac{n\t f_\Maxi}{1-\t f_M}}.
\end{split}
\end{equation*}
\end{proof}

\noindent In the biofilm case, i.e. $b=1$, we can improve the upper bound. As shown e.g. in \cite{HISSINKMULLER, mitra2023wellposedness}, the solution $u$ of the time continuous system  is strictly less than 1, so we aim to prove this also for the approximate solutions $u_n$. \\

\begin{lemma}[Upperbound $u_n$ if $b=1$]\label{lem: un bdd1} Consider the biofilm case, i.e. $b = 1$, and let $u_0\in L^\infty(\Om)$ satisfy assumption \ref{ass: 4} and $\tau<1\slash f_\Maxi$. Then for the sequence $\{(u_n,v_n)\}_{n=1}^N \\ \subset L^2(\Om)^2$ solving \eqref{eq: time-discrete weak system}, one has
\begin{equation}
    0 \leq u_n \leq 1 - \delta\qquad \text{a.e. in}\ \Omega,
\end{equation}
for all $1 \leq n \leq N$, and some $\delta > 0$.
\end{lemma}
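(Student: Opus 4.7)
Since $\Phi(u)\to\infty$ as $u\nearrow 1$ by \ref{ass: 1}, it suffices to derive a uniform-in-$n$ $L^\infty$ bound for $\Phi(u_n)$ and invert. The plan is to construct a \emph{time-independent} supersolution $W\in H^1_0(\Om)\cap L^\infty(\Om)$ for $\Phi(u_n)$ and prove $\Phi(u_n)\le W$ a.e.\ by induction on $n$; this forces $u_n$ to stay bounded away from $1$ by an amount determined by $u_0$, $f_\Maxi$ and the geometry of $\Om$.

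Define $\zeta\in H^1_0(\Om)$ as the weak solution of $-\Delta\zeta=f_\Maxi$ in $\Om$. By the weak maximum principle $\zeta\ge 0$, and comparison with the explicit quadratic barrier $f_\Maxi(R^2-|x-x_0|^2)/(2d)$ on any ball $B_R(x_0)\supset\Om$ yields $\|\zeta\|_{L^\infty(\Om)}\le \tfrac{{\rm diam}(\Om)^2}{2d}f_\Maxi$. Set
\[
W\coloneqq \|\Phi(u_0)\|_{L^\infty(\Om)}+\zeta,\qquad \|W\|_{L^\infty(\Om)}\le \|\Phi(u_0)\|_{L^\infty(\Om)}+\tfrac{{\rm diam}(\Om)^2}{2d}f_\Maxi,
\]
which is finite, and lies in the range of $\Phi$ because $\|u_0\|_{L^\infty(\Om)}<1$ by \ref{ass: 4}. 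The base case $n=0$ is immediate. Assuming $\Phi(u_{n-1})\le W$ a.e., test \eqref{eq: time-discrete weak 1} with $\phi\coloneqq [\Phi(u_n)-W]_+$, which is admissible since $\phi\ge 0$ and $\phi|_{\p\Om}=0$ (as $\Phi(u_n)|_{\p\Om}=0$ while $W\ge 0$ on $\p\Om$). Writing $\nabla\Phi(u_n)=\nabla(\Phi(u_n)-W)+\nabla\zeta$ on the support of $\phi$ and integrating by parts against $\zeta$ gives
\[
\tfrac{1}{\t}(u_n-u_{n-1},\phi)+\|\nabla\phi\|^2 \;=\; \int_\Om \big(f(v_{n-1})u_n - f_\Maxi\big)\,\phi.
\]
On $\{\phi>0\}$, the induction hypothesis and strict monotonicity of $\Phi$ force $u_n>u_{n-1}$, so the first term is non-negative. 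The existence construction in Step~1 of \Cref{lem: exist time-discrete} produces $u_n=\Phi^{-1}(w_n)\in[0,1)$ a.e., hence $f(v_{n-1})u_n-f_\Maxi\le f_\Maxi(u_n-1)\le 0$ a.e., so the right-hand side is $\le 0$. Consequently $\|\nabla\phi\|=0$, and Poincar\'e's inequality \eqref{eq:Poincare} yields $\phi\equiv 0$, proving $\Phi(u_n)\le W$. Inverting, $u_n\le \Phi^{-1}(\|W\|_{L^\infty(\Om)})=\Breve{u}<1$, so one may take $\delta\coloneqq 1-\Breve{u}>0$; the non-negativity $u_n\ge 0$ is already provided by \Cref{lem: un-bdd-pos}.

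The main obstacle is that the non-positivity of $(f(v_{n-1})u_n-f_\Maxi)\phi$ morally requires the quantitative bound $u_n<1$, which is exactly what is being proved, making the argument appear circular. This is broken by exploiting that the existence proof constructs $u_n$ through the inverse $\Phi^{-1}\colon \R^+\to[0,1)$, so the a.e.\ strict inequality $u_n<1$ is built into the weak solution \emph{before} any further estimate; the supersolution argument then upgrades this qualitative fact into the quantitative $n$-independent bound $u_n\le 1-\delta$, with $\Breve{u}$ given exactly by \eqref{eq:uup}.
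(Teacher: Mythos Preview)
Your proof is correct and follows essentially the same comparison-with-a-Poisson-supersolution strategy as the paper: construct $W=\|\Phi(u_0)\|_{L^\infty}+\zeta$ with $-\Delta\zeta=\text{const}$, test \eqref{eq: time-discrete weak 1} with $[\Phi(u_n)-W]_+$, and close by induction. The only real difference is the choice of the constant in the Poisson source. The paper first invokes \Cref{lem: un-bdd-pos} to get a crude bound $u_n\le C$ and solves $-\Delta\tilde\omega=Cf_\Maxi$, so that $f_\Maxi C-f(v_{n-1})u_n\ge 0$ follows from $u_n\le C$; only afterwards, in \Cref{lem: max_un}, does it redo the argument with $C=1$ to get the explicit bound \eqref{eq:uup}. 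You instead take $C=1$ from the outset, justifying $f(v_{n-1})u_n\le f_\Maxi$ by the qualitative fact $u_n=\Phi^{-1}(w_n)\in[0,1)$ coming out of the existence construction. This collapses Lemmas~\ref{lem: un bdd1} and~\ref{lem: max_un} into a single step and yields the sharp constant $\Breve u$ directly, at the minor cost of relying on a specific feature of the existence proof rather than only on the abstract weak formulation.
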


\begin{proof}
By \Cref{lem: un-bdd-pos} we have $0 \leq u_n \leq C$, for some constant $C>0$.
Let $\Tilde{\om} \in H^1_0(\Om) + \|\Phi(u_0)\|_{L^{\infty}(\Om)}$ be the solution of
\begin{equation} \label{eq: omtilde}
    \begin{split}
        \brac{\nabla \Tilde{\om}, \nabla \phi} = \left(C f_\Maxi, \phi \right) \quad \text{for all } \phi \in H^1_0(\Om).
    \end{split}
\end{equation}
 As $Cf_\Maxi \in \R^+_\ast$, by properties of the Poisson equation, we know that $\Tilde{\om} \in \Linf$. Further, since $\tilde{\om}$ is superharmonic, the maximum principle implies that $\Phi(u_0) \leq \Tilde{\om}$. We will prove that $\Phi(u_n) \leq \Tilde{\om}$ for all $1\leq n\leq N$ by induction. Assuming it holds for $n-1$, we subtract \eqref{eq: omtilde} from \eqref{eq: time-discrete weak 1} and multiply both sides by $\t$. We then choose the test function $\phi = \posi{\Phi(u_n) - \Tilde{\om}} \in \H$ to find
\begin{equation}
    \begin{split}
        &\brac{u_n - \Phi^{-1}(\Tilde{\om}) + \Phi^{-1}(\Tilde{\om}) - u_{n-1}, \posi{\Phi(u_n) - \Tilde{\om}}} + \t \brac{\nabla\brac{\Phi(u_n) - \Tilde{\om}}, \nabla \posi{\Phi(u_n) - \Tilde{\om}}} \\
        &+ \t \brac{f_M C - f(v_{n-1})u_n, \posi{\Phi(u_n) - \Tilde{\om}}} = 0.
    \end{split}
\end{equation}
 By the induction hypothesis, we have $\Phi^{-1}(\Tilde{\om}) - u_{n-1}\geq 0$, while ${\brac{u_n - \Phi^{-1}(\Tilde{\om})}\posi{\Phi(u_n) - \Tilde{\om}}}$ ${\geq 0}$ as $\Phi$ is an increasing function. The other terms are also positive as \[\t \brac{\nabla\brac{\Phi(u_n) - \Tilde{\om}}, \nabla \posi{\Phi(u_n) - \Tilde{\om}}} = \t \norm{\nabla \posi{\Phi(u_n) - \Tilde{\om}}}^2 \text{ and } f_\Maxi C - f(v_{n-1})u_n \geq 0\] by definition of $f_\Maxi$ and $C$. Hence, the Poincare inequality implies that
\begin{equation}
    \norm{\posi{\Phi(u_n) - \Tilde{\om}}}_{L^2(\Om)}^2 \leq C_{\Om} \norm{\nabla\posi{\Phi(u_n) - \Tilde{\om}}}_{\Lom{2}}^2 = 0,
\end{equation}
and thus $\Phi(u_n) \leq \Tilde{\om}$. To conclude the proof, we recall that $\Tilde{\om}$ is bounded and hence, 
\begin{equation}
    \begin{split}
        0 \leq u_n \leq \Phi^{-1}(\Tilde{\om}) = 1 - \delta, \quad \delta > 0.
    \end{split}
\end{equation}
\end{proof}

\begin{remark} [Effective Lipschitz continuity of $\Phi$]
By \Cref{lem: un bdd1}, in the biofilm case, i.e. $b=1$, we can effectively restrict the domain of $\Phi$ to $[0,1-\delta] \subset [0,1)$. Within this interval, $\Phi'$ is Lipschitz continuous as stated in assumption \ref{ass: 1}.
\end{remark}

 We have shown that $u_n \leq  1-\delta$ for some $\delta>0$, but we aim to derive an explicit bound. Such a bound will be useful in Section \ref{sec: linear} when we propose the linearisation scheme and is  provided in the following lemma. 

\begin{lemma}[Explicit upper bound $u_n$ if $b=1$]\label{lem: max_un}
Consider the biofilm case, i.e. $b = 1$, and let $u_0\in L^\infty(\Om)$ satisfy assumption \ref{ass: 4} and $\tau<1\slash f_\Maxi$. Then for the sequence $\{(u_n,v_n)\}_{n=1}^N \subset L^2(\Om)^2$ solving \eqref{eq: time-discrete weak system}, one has
\begin{equation}
    0 \leq u_n \leq \Phi^{-1}(\Tilde{C}) < 1\qquad \text{a.e. in}\ \Omega,
\end{equation}
for all $1\leq n \leq N$, where
\begin{equation}\label{eq:LinfBound2}
\Tilde{C} = \infnorm{\Phi(u_0)} + \frac{{\rm diam}(\Om)^2}{2d} f_\Maxi,
\end{equation}
and $d$ is the spatial dimension of $\Om \subset \R^d$.    
\end{lemma}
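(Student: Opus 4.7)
The plan is to run the comparison argument of \Cref{lem: un bdd1} again, but with two refinements: exploit the now-known pointwise bound $u_n\leq 1-\delta<1$ from \Cref{lem: un bdd1} to drop the factor $C$ in front of $f_\Maxi$ in the auxiliary Poisson problem, and then replace the purely qualitative statement $\tilde{\omega}\in L^\infty(\Om)$ by an \emph{explicit} pointwise upper bound obtained from a quadratic supersolution.

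First I would define $\tilde{\omega}\in H^1_0(\Om)+\|\Phi(u_0)\|_{L^\infty(\Om)}$ as the unique solution of
\begin{equation*}
(\nabla\tilde{\omega},\nabla\phi)=(f_\Maxi,\phi)\quad\text{for all }\phi\in H^1_0(\Om),
\end{equation*}
i.e.\ the Poisson problem with constant right-hand side $f_\Maxi$ and constant Dirichlet data $\|\Phi(u_0)\|_{L^\infty(\Om)}$. Since $-\Delta\tilde{\omega}=f_\Maxi\geq 0$, the maximum principle gives $\tilde{\omega}\geq\|\Phi(u_0)\|_{L^\infty(\Om)}\geq\Phi(u_0)$ in $\Om$, which provides the base case $n=0$ for the induction below.

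Next, I would show $\Phi(u_n)\leq\tilde{\omega}$ a.e.\ for every $1\leq n\leq N$ by induction. Subtracting the weak formulation of the Poisson problem from $\t$ times \eqref{eq: time-discrete weak 1}, testing with $\phi=\posi{\Phi(u_n)-\tilde{\omega}}\in H^1_0(\Om)$, and using the induction hypothesis $u_{n-1}\leq\Phi^{-1}(\tilde{\omega})$ together with the \emph{sharp} pointwise estimate $f(v_{n-1})u_n\leq f_\Maxi$, which is the crucial improvement and follows from \Cref{lem: un bdd1} via $u_n\leq 1$, one obtains a sum of four nonnegative terms equal to zero, exactly as in the proof of \Cref{lem: un bdd1}. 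In particular, $\norm{\nabla\posi{\Phi(u_n)-\tilde{\omega}}}_{L^2(\Om)}=0$ and the Poincar\'e inequality yields $\Phi(u_n)\leq\tilde{\omega}$ a.e.\ in $\Om$.

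Finally, to bound $\tilde{\omega}$ pointwise by the explicit constant $\tilde{C}$, I would pick $x_0\in\Om$, set $R=\mathrm{diam}(\Om)$ so that $|x-x_0|\leq R$ for all $x\in\Om$, and consider the quadratic barrier
\begin{equation*}
\psi(x)\coloneqq\|\Phi(u_0)\|_{L^\infty(\Om)}+\frac{f_\Maxi}{2d}\left(R^2-|x-x_0|^2\right).
\end{equation*}
A direct computation gives $-\Delta\psi=f_\Maxi$ in $\Om$, while $\psi\geq\|\Phi(u_0)\|_{L^\infty(\Om)}=\tilde{\omega}|_{\partial\Om}$ on $\partial\Om$; the comparison principle then yields $\tilde{\omega}\leq\psi\leq\|\Phi(u_0)\|_{L^\infty(\Om)}+\frac{{\rm diam}(\Om)^2}{2d}f_\Maxi=\tilde{C}$. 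Combining this with $\Phi(u_n)\leq\tilde{\omega}$, the strict monotonicity of $\Phi$, and its singularity at $1$ (so that $\Phi^{-1}(\tilde{C})<1$ for every finite $\tilde{C}$) delivers the claimed bound $0\leq u_n\leq\Phi^{-1}(\tilde{C})<1$. I do not anticipate any serious obstacle: the only non-routine ingredient is the barrier construction, which is a classical one-line computation, and no new smallness assumption on $\t$ beyond $\t<1/f_\Maxi$ is needed because the improvement from $Cf_\Maxi$ to $f_\Maxi$ is bought entirely with the qualitative bound already established in \Cref{lem: un bdd1}.
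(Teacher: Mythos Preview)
Your proposal is correct and follows essentially the same route as the paper: both re-run the comparison argument of \Cref{lem: un bdd1} with the auxiliary Poisson problem now carrying right-hand side $f_\Maxi$ (the factor $C$ having been reduced to $1$ by \Cref{lem: un bdd1}), and then bound the Poisson solution $\tilde{\omega}$ explicitly via a quadratic function with Laplacian $\pm f_\Maxi$ and the maximum/comparison principle. The only cosmetic difference is that the paper adds $w(x)=\frac{f_\Maxi}{2d}|x-\bar{x}|^2$ (centred at the centroid) to $\tilde{\omega}$ to obtain a harmonic function and applies the maximum principle, whereas you construct a supersolution $\psi$ directly and apply the comparison principle; your choice of an arbitrary $x_0\in\Om$ has the minor advantage that $|x-x_0|\leq\mathrm{diam}(\Om)$ is immediate, while the paper's bound $\|x-\bar{x}\|_{L^\infty(\partial\Om)}\leq\mathrm{diam}(\Om)$ tacitly assumes $\bar{x}\in\overline{\Om}$.
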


\begin{proof}
Let $\tilde{\om}$ be the solution of \Cref{eq: omtilde} as in the proof of \Cref{lem: un bdd1}. Then, $\tilde{\om}$ is a classical solution to the elliptic problem
\begin{equation}
    \begin{split}
        \begin{cases}
            -\Delta \Tilde{\om} = f_\Maxi & \text{on $\Omega$}, \\
            \Tilde{\om} = \infnorm{\Phi(u_0)} & \text{on $\p \Om$},
        \end{cases}
    \end{split}
\end{equation}
as we can take $C = 1$ by \Cref{lem: un bdd1}.

 As $f_\Maxi > 0$, $\Tilde{\om}$ is superharmonic, and we conclude that $\Tilde{\om} \geq \infnorm{\Phi(u_0)} \geq 0$
by the maximum principle. We then define the function $w = f_\Maxi\norm{x - \Bar{x}}^2 / (2d) \geq 0$, where $d$ is the spatial dimension and $\Bar{x}$ is given by $\Bar{x}_i = |\Om|^{-1} \int_{\Om}x_i$. It is straightforward to see that $\Delta w = f_\Maxi$.

 If we consider $z = \Tilde{\om} + w$, it satisfies $-\Delta z = - \Delta \Tilde{\om} - \Delta w  = 0$. Hence, by the maximum principle it follows that
\begin{equation}
    \begin{split}
         \infnorm{\Tilde{\om}} \leq \infnorm{z} \leq \norm{z}_{L^{\infty}(\p \Om)} &= \norm{\Tilde{\om}}_{L^{\infty}(\p \Om)} + \frac{\norm{x-\Bar{x}}^{2}_{L^{\infty}(\p \Om)}}{2d}f_\Maxi\\
         &= \infnorm{\Phi(u_0)} + \frac{{\rm diam}(\Om)^2}{2d} f_\Maxi,
    \end{split}
\end{equation}
where we used that $w, \Tilde{w},z \geq 0$, which implies that $0 \leq \Tilde{w} \leq z$. This concludes the proof.
\end{proof}

 We now show that $v_n$ is positive, similarly as in \Cref{lem: un-bdd-pos} for $u_n$, for both cases  $b=1$ and $b=\infty$.

\begin{lemma}[Positivity of $v_n$] \label{lem: vn-pos}
Let the assumptions of Lemma 3.4 hold.
Let $v_{n-1}\in L^2(\Omega)$ be positive a.e. in $\Om$. Then for $\t < 1 \slash g_\Maxi$ the solution $v_n\in L^2(\Omega)$, $\mu v_n\in \H$ of \eqref{eq: time-discrete weak 2} is positive a.e. in $\Om$.
\end{lemma}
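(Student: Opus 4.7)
The plan is to split the argument into the two cases $\mu=0$ and $\mu=1$ that arise from the second equation in the time-discrete system. In both cases the key analytical ingredient is to control the sign of the reaction term $g(u_n,v_{n-1})$ using assumption \ref{ass: 2}: since $g$ is Lipschitz in its second argument with constant $g_\Maxi$ and $g(\cdot,0)\geq 0$, I can write
\begin{equation*}
    g(u_n,v_{n-1})\;\geq\; g(u_n,0)-g_\Maxi\,|v_{n-1}|\;\geq\; -g_\Maxi\,v_{n-1},
\end{equation*}
where the last inequality uses the induction hypothesis $v_{n-1}\geq 0$ a.e. This lower bound is the mechanism by which a restriction of the form $\tau<1/g_\Maxi$ appears.

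For the ODE case $\mu=0$, the time-discrete equation \eqref{eq: time-discrete weak 2} reduces pointwise to the explicit update $v_n = v_{n-1}+\tau\,g(u_n,v_{n-1})$ (as observed in \Cref{rem: decouple equations}). Plugging in the lower bound above immediately yields $v_n\geq (1-\tau g_\Maxi)\,v_{n-1}\geq 0$ whenever $\tau<1/g_\Maxi$.

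For the PDE case $\mu=1$ I will mimic the test-function argument used for $u_n$ in \Cref{lem: un-bdd-pos}. Since $v_n\in H^1_0(\Om)$ here, $\nega{v_n}\in H^1_0(\Om)$ is an admissible test function in \eqref{eq: time-discrete weak 2}. Using the standard identities $v_n\nega{v_n}=\nega{v_n}^2$ and $\del v_n\cdot\del\nega{v_n}=|\del\nega{v_n}|^2$, together with the ellipticity bound $D(u_n)\geq D_\Mini>0$ from \ref{ass: 3} and the non-negativity of $v_{n-1}$, the test identity becomes
\begin{equation*}
\|\nega{v_n}\|^2+\int_\Om v_{n-1}|\nega{v_n}|+\tau D_\Mini\|\del\nega{v_n}\|^2 \;\leq\; \tau\int_\Om g(u_n,v_{n-1})\nega{v_n}.
\end{equation*}
Using the bound $g(u_n,v_{n-1})\geq -g_\Maxi v_{n-1}$ together with $\nega{v_n}\leq 0$, the right-hand side is controlled by $\tau g_\Maxi\int_\Om v_{n-1}|\nega{v_n}|$. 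Rearranging gives
\begin{equation*}
    \|\nega{v_n}\|^2+(1-\tau g_\Maxi)\int_\Om v_{n-1}|\nega{v_n}|+\tau D_\Mini\|\del\nega{v_n}\|^2\;\leq\; 0,
\end{equation*}
so for $\tau<1/g_\Maxi$ every term on the left is non-negative and must therefore vanish; in particular $\nega{v_n}=0$ a.e., which is the desired positivity.

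The only step requiring a bit of care is justifying the use of $\nega{v_n}$ as a test function (standard, since $v_n\in H^1_0$) and correctly tracking the signs when combining the Lipschitz one-sided bound on $g$ with the negative part of $v_n$; the $\tau g_\Maxi$ factor comes out of this and dictates the time-step restriction, in complete parallel to the role of $\tau f_\Maxi$ in the positivity proof for $u_n$.
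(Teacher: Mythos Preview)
Your proof is correct and follows essentially the same approach as the paper: test \eqref{eq: time-discrete weak 2} with $\nega{v_n}$ and exploit the Lipschitz continuity of $g$ together with $g(\cdot,0)\geq 0$ to show the right-hand side is controlled by the $v_{n-1}$ term, forcing $\nega{v_n}=0$ when $\tau<1/g_\Maxi$. The only cosmetic difference is that the paper packages the sign argument via the auxiliary monotone function $G(u,t):=t+\tau g(u,t)$ rather than writing out the one-sided Lipschitz bound explicitly, and treats $\mu=0$ by the same test-function argument rather than the pointwise formula you use; both routes are equivalent.
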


\begin{proof}
By \Cref{lem: un-bdd-pos,lem: expl upperbound un,lem: un bdd1}, $u_{n}$ is positive and bounded in $[0,b)$ implying that $g(u,\cdot)$ is well-defined. For the positivity of $v$, first observe that $G(u,t):=t +\t g(u,t)$ is an increasing function in $t$ for $\t<1/g_\Maxi$. For $\mu=1$, inserting $\eta = [v_n]_-$ in 
\eqref{eq: time-discrete weak 2} gives
\begin{align*}
   &(v_{n}, [v_n]_-) + \t \mu(D(u_n) \nabla v_{n}, \nabla [v_n]_-) = \t(g(u_n, v_{n-1}), [v_n]_-) + (v_{n-1},[v_n]_-)\\
 = & \ \t(g(u_n, v_{n-1})-g(u_n, 0), [v_n]_-) + \t(g(u_n, 0), [v_n]_-) + (v_{n-1},[v_n]_-) \\
 =&\ \t (G(u_n,v_{n-1})-G(u_n,0), [v_n]_-) + \t(g(u_n, 0), [v_n]_-) \leq 0,
\end{align*}
since $G(u_n,v_{n-1})\geq G(u_n,0)$ due to $v_{n-1}$ being positive, and $g(u_n,0)\geq 0$ from \ref{ass: 2}. Using a similar test function in the case $\mu=0$, we conclude that 
\begin{equation}
   \int_{\Omega}[v_{n}]_{-}^{2} + \t \mu \int_{\Omega}D(u_n) \nabla [v_{n}]_{-}^{2} \leq 0
\end{equation}
from which we conclude that $[v_n]_-=0$ a.e. in $\Om$, or in other words, $v_n \geq 0$.
\end{proof}

\noindent We now have all the necessary results to prove \Cref{thm: main time 1}. 

\begin{proof}[Proof of \Cref{thm: main time 1}]
The existence and uniqueness of weak solutions of \eqref{eq: time-discrete weak system} is provided by \Cref{lem: exist time-discrete}. The positivity and boundedness of $u_n$ are proven in \Cref{lem: un-bdd-pos}, while the explicit bounds are given in \Cref{lem: expl upperbound un} and \Cref{lem: max_un}. Finally, the positivity of $v_n$ is the result of \Cref{lem: vn-pos}.
\end{proof}

\subsection{Proof of \Cref{theo:time_convergence}: convergence of the time-discrete solutions}
Here Rothe's method 
is used to prove the convergence of the time-discrete solutions $\{(u_n,v_n)\}_{n\in \N}\subset (L^2(\Om))^2 $ of \eqref{eq: time-discrete weak system}. For a time-step size $\t>0$ with time-steps $t_n:=n\t$ (recall that $T= N_\t \t$ is fixed), and a sequence $\{z_n\}_{n\in \N}\subset L^2(\Om)$, we construct the piece-wise constant and affine  time-interpolations $\hat{z}_\t,\,\bar{z}_\t\in L^2(\Om\times [0,T])$ as 
\begin{align}\label{eq:time_interpolants}
\hat{z}_{\t}(t):= z_{n}, \quad \bar{z}_{\t}(t):=z_{n-1}+ \frac{t-t_{n-1}}{\t}(z_n-z_{n-1})\quad \text{ if } t\in (t_{n-1},t_n] \text{ for some } n\in \N. 
\end{align}

\subsubsection{Uniform boundedness of the interpolates in Bochner spaces}

\begin{lemma}[Uniform boundedness of $\hat{w}_\t$, $\Bar{w}_\t$, $\Bar{v}_\t$ with respect to $\t$]\label{lem:unif bdd w}
For a time-step size $\t>0$, let $\{(u_n,w_n,v_n)\}_{n\in \N}$ satisfy the assumptions in \Cref{theo:time_convergence}, and let $\{\hat{w}_\t\}$, $\{\hat{v}_\t\}$, $\{\Bar{w}_\t\}$, $\{\Bar{v}_\t\}$ be the piecewise constant, respectively piecewise linear time-interpolations introduced in \eqref{eq:time_interpolants}. Then, there exists a constant $\bar{C}>0$ independent of $\t>0$ such that for both, $z=\Bar{w}_\t,$ and $z=\hat{w}_\t$, it holds
\begin{align}\label{eq:Uni_bnd_w}
& \sup_{0\leq t\leq T} \|z(t)\|^2_{L^\infty(\Om)}  +\int_0^T \|\del z\|^2\leq \bar{C},\\
&  \sup_{0\leq t\leq T} \|\del z(t)\| + \int_0^T \|\p_t \Bar{w}_\t\|^2 \leq \bar{C} [1 +  \|\del \Phi(u_0)\|^2]. \label{eq:Uni_bnd_w_H1}
\end{align}
Additionally, for $\mu\in \{0,1\}$  it holds
\begin{align}\label{eq:Uni_bnd_v}
& \sup_{0\leq t\leq T} \|\Bar{v}_\t(t)\|^2 + \int_0^T \|\p_t \Bar{v}_\t\|^2_{H^{-1}(\Om)}   +(1-\mu)\sup_{0\leq t\leq T} \|\p_t \Bar{v}_\t(t)\|^2 +\mu \int_0^T \|\del\Bar{v}_\t\|^2\leq \bar{C}.
\end{align}    
\end{lemma}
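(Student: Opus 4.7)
\medskip

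\noindent\textbf{Proof plan.} The overall strategy is to derive discrete energy estimates by testing \eqref{eq: time-discrete weak 1}--\eqref{eq: time-discrete weak 2} with carefully chosen test functions, sum over $n$ to obtain uniform-in-$\tau$ bounds on the sequences $\{w_n\}$ and $\{v_n\}$, and finally transfer these bounds to the time-interpolants using the fact that the piecewise-affine interpolant is a convex combination of successive nodal values. The key tool throughout is the $L^\infty$ control of $u_n$ afforded by Theorem \ref{thm: main time 1} (explicit via \eqref{eq:uup}), which confines $u_n$ to a compact subinterval of $[0,b)$ on which $\Phi$ and $\Phi^{-1}$ are Lipschitz despite any singularity at $u=b$ or degeneracy at $u=0$.

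\smallskip

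\noindent For the first inequality \eqref{eq:Uni_bnd_w}, the $L^\infty$ bound is immediate: $\|\hat w_\tau(t)\|_{L^\infty(\Om)} = \|\Phi(u_n)\|_{L^\infty(\Om)} \leq \Phi(\Breve u) =: C_1$ by monotonicity of $\Phi$ on $[0,\Breve u]$, and the same holds for $\bar w_\tau$ by convexity. For the gradient estimate I would test \eqref{eq: time-discrete weak 1} with $\phi = w_n \in H^1_0(\Om)$ and handle the discrete time derivative using the primitive $\mathcal{F}(u) := \int_0^u \Phi(s)\,\mathrm d s$. Since $\mathcal F$ is convex, $(u_n - u_{n-1})\Phi(u_n) \geq \mathcal F(u_n) - \mathcal F(u_{n-1})$, so after multiplying by $\tau$ and summing from $1$ to $N$ the time-derivative contribution telescopes and is controlled by $|\Om|\mathcal F(\Breve u)$. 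Bounding the reaction term by Cauchy--Schwarz and Young yields $\sum_n \tau\|\nabla w_n\|^2 \leq C$, which is exactly $\int_0^T\|\nabla \hat w_\tau\|^2 \leq C$; the same bound for $\bar w_\tau$ follows from $\|\nabla\bar w_\tau\|^2 \leq 2(\|\nabla w_n\|^2 + \|\nabla w_{n-1}\|^2)$ on each interval.

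\smallskip

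\noindent The stronger estimate \eqref{eq:Uni_bnd_w_H1} is the most delicate part and is the place where the hypothesis $u_0 \in H^1_0(\Om)$ enters. I would test \eqref{eq: time-discrete weak 1} with $\phi = w_n - w_{n-1} \in H^1_0(\Om)$. The elliptic term splits as $(\nabla w_n, \nabla(w_n - w_{n-1})) = \frac{1}{2}(\|\nabla w_n\|^2 - \|\nabla w_{n-1}\|^2 + \|\nabla(w_n - w_{n-1})\|^2)$ and telescopes upon summation. For the discrete time-derivative term, I would exploit that $\Phi$ is Lipschitz on the compact range $[0,\Breve u]$ with some constant $L$; the monotonicity then yields the key inequality
\begin{equation*}
(u_n - u_{n-1})(w_n - w_{n-1}) \;\geq\; L^{-1}(w_n - w_{n-1})^2,
\end{equation*}
since $|w_n - w_{n-1}| \leq L|u_n - u_{n-1}|$ and both differences have the same sign. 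After summation this produces the dissipation $\sum_n \tau^{-1}\|w_n - w_{n-1}\|^2 = \int_0^T \|\partial_t\bar w_\tau\|^2$. The reaction contribution $(f(v_{n-1})u_n, w_n - w_{n-1})$ is absorbed using Cauchy--Schwarz and Young with parameter tuned to the $L^{-1}$ coefficient. The main obstacle here is precisely making sure that the effective Lipschitz constant $L$ of $\Phi$ is finite; this requires invoking Lemma \ref{lem: un bdd1} in the biofilm case $b=1$, which was previously used exactly for this purpose.

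\smallskip

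\noindent For \eqref{eq:Uni_bnd_v} the strategy is standard. Testing \eqref{eq: time-discrete weak 2} with $\eta = v_n$ and using $(v_n - v_{n-1})v_n \geq \tfrac{1}{2}(v_n^2 - v_{n-1}^2)$ together with \ref{ass: 3} gives, after the discrete Gronwall lemma \eqref{eq: discr Gronwall} applied to the Lipschitz growth $|g(u_n,v_{n-1})| \leq g_\Maxi(1+|v_{n-1}|)$, uniform control of $\sup_n\|v_n\|^2$ and $\mu\sum_n\tau\|\nabla v_n\|^2$. For the dual-norm bound on $\partial_t \bar v_\tau$ when $\mu = 1$, I would take an arbitrary $\eta \in H^1_0(\Om)$ with $\|\nabla\eta\|\leq 1$, use the equation to write $\langle (v_n - v_{n-1})/\tau,\eta\rangle$ in terms of the already-bounded quantities $\|D(u_n)\nabla v_n\|$ and $\|g(u_n,v_{n-1})\|$, and take the supremum. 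The case $\mu = 0$ is immediate from \eqref{eq: time-discrete weak 2} since $v_n - v_{n-1} = \tau g(u_n,v_{n-1})$ pointwise, and $g$ is bounded on the bounded range of its arguments, yielding the $\sup_t\|\partial_t\bar v_\tau\|$ bound. Re-expressing the nodal bounds as interpolant bounds is then a mechanical step.
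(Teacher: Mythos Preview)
Your proposal is correct and follows essentially the same route as the paper: the $L^\infty$ bound via $\Phi(\Breve u)$, testing \eqref{eq: time-discrete weak 1} with $w_n$ (and the primitive $\mathcal F$) for the $L^2(0,T;H^1_0)$ bound, testing with $w_n-w_{n-1}$ and using the Lipschitz constant of $\Phi$ on $[0,\Breve u]$ for \eqref{eq:Uni_bnd_w_H1}, and testing \eqref{eq: time-discrete weak 2} with $v_n$ followed by discrete Gronwall and a duality argument for \eqref{eq:Uni_bnd_v}. One small imprecision: in the $\mu=0$ case you write that ``$g$ is bounded on the bounded range of its arguments'', but $v_{n-1}$ is only controlled in $L^2$, not $L^\infty$; the correct argument (which the paper uses) is $\|g(u_n,v_{n-1})\|\leq C(1+\|v_{n-1}\|)\leq C$ via the Lipschitz estimate and the already-established $L^2$ bound on $v_{n-1}$.
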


\begin{proof}
    Observe that \Cref{thm: main time 1}, specially \eqref{eq:uup} directly yields 
\begin{align*}
   &\sup_{0\leq t\leq T} \|\hat{w}_\t(t)\|_{L^\infty(\Om)}\leq  \sup_{1\leq n\leq N_\t} \|w_n\|_{L^\infty(\Om)}\overset{\eqref{eq:uup}}\leq \Phi(\Breve{u})<\infty.
\end{align*}
Similarly, $\|\bar{w}_\t(t)\|_{L^\infty(\Om)}<\infty$ since $\bar{w}_\t(t)$ is a convex combination of $\{w_n\}_{n\in\N}$. The other estimates follow closely the Rothe method, see e.g. \cite{mitra2023wellposedness} for an identical context, or \cite{mitra2021existence}.

\textbf{(Step 1) Bound \eqref{eq:Uni_bnd_v}:} Inserting $\eta=v_n$ in \eqref{eq: time-discrete weak 2} one has
\begin{align}
    \frac{1}{\t}(v_n-v_{n-1},v_n) + \mu (D(u_n)\del v_n,\del v_n)= (g(u_n,v_{n-1}),v_n).
\end{align}
To rewrite the first term we use the identity $a(a-b)=\frac{1}{2}[a^2-b^2 +(a-b)^2]$,
\begin{subequations}
    \begin{align}
     &\frac{1}{\t}(v_n-v_{n-1},v_n)   = \frac{1}{2\t}[\|v_n\|^2 - \|v_{n-1}\|^2 + \|v_n-v_{n-1}\|^2],
     \end{align}
and for the second term, \ref{ass: 3} implies that
     \begin{align}
     & \mu (D(u_n)\del v_n,\del v_n)\geq \mu D_\Mini \|\del v_n\|^2.
     \end{align}
For the third term, notice that $|g(u_n,v_{n-1})|\leq  |g(u_n,v_{n-1})-g(u_n,0)|+ |g(u_n,0)|\leq C[1+ |v_{n-1}|]$ for some constant $C>0$, which follows from the Lipschitz continuity of $g$ in \ref{ass: 2} and \eqref{eq:uup}. Then, one has
     \begin{align}
     &  (g(u_n,v_{n-1}),v_n)=(g(u_n,v_{n-1}),v_n-v_{n-1}) + (g(u_n,v_{n-1}),v_{n-1})\\
     &\overset{\ref{ass: 2},\eqref{eq: Young}}\leq C[1+ \|v_{n-1}\|^2] + \|v_n-v_{n-1}\|^2,
    \end{align}
\end{subequations}
and summing up the estimates above from $n=1$ to $n=N_\t$, we obtain
\begin{align*}
    \|v_{N_\t}\|^2+ (1-2\t)\sum_{n=1}^{N_\t}\|v_n-v_{n-1}\|^2 + 2\mu D_\Mini \sum_{n=1}^{N_\t}\|\del v_n\|^2\t \leq \|v_0\|^2 + 2C \sum_{n=1}^{N_\t} [1+ \|v_{n-1}\|^2]\t.
\end{align*}
For $\t<1/2$, applying the discrete Gronwall lemma \eqref{eq: discr Gronwall} to the above inequality reveals that $\|v_n\|$ is uniformly bounded with respect to $\t$ provided $1\leq n \leq N_\t$. Substituting this back into the above inequality, one obtains
\begin{align}\label{eq:v-disc-bound}
    \|v_{N_\t}\|^2+ \sum_{n=1}^{N_\t}\|v_n-v_{n-1}\|^2 + \mu D_\Mini\sum_{n=1}^{N_\t}\|\del v_n\|^2\t  \leq C.
\end{align}

Observe that $\hat{v}_\t(t)=v_n$ for $t\in (t_{n-1},t_n]$, and $\bar{v}_\t$ is a convex combination of $\{v_n\}_{n\in \N}$. Hence, the above inequality implies that $\|\hat{v}_\t(t)\|$ and 
 $\|\Bar{v}_\t(t)\|$ are uniformly bounded with respect to $\t$. Likewise, $\int_0^T \|\del \hat{v}_\t\|^2 =  \sum_{n=1}^{N_\t}\|\del v_n\|^2\t$ which is uniformly bounded due to \eqref{eq:v-disc-bound} if $\mu=1$, and the same also holds for $\int_0^T \|\del \bar{v}_\t\|^2$. Observe that for $t\in (t_{n-1},t_n]$,
 \begin{subequations}\label{eq:dtv-bound}
 \begin{align}
    &\|\p_t\bar{v}_\t(t)\|_{H^{-1}(\Om)} :=\sup_{\substack{\eta\in H^1_0(\Om)\\\|\del \eta\|=1
    }}\left\langle \frac{1}{\t}(v_n-v_{n-1}),\eta\right \rangle \nonumber\\
    &\qquad\overset{\eqref{eq: time-discrete weak 1}}=\sup_{\substack{\eta\in H^1_0(\Om)\\\|\del \eta\|=1
    }} \left[-\mu(D(u_n)\del v_n,\del \eta) + (g(u_n,v_{n-1}),\eta)\right]\nonumber\\ &\qquad\overset{\eqref{eq:Poincare}, \ref{ass: 2}}\leq \mu 
 D_\Maxi\|\del v_n\| + C_\Om\|g(u_n,v_{n-1})\|\overset{\eqref{eq:v-disc-bound},\eqref{eq:uup}}\leq \mu D_\Maxi\|\del \hat{v}_\t\| + C.
 \end{align}
This implies that $\int_0^T\|\p_t \bar{v}_\t(t)\|^2_{H^{-1}(\Om)}$ is uniformly bounded with respect to $\t$ since $\int_0^T \|\del \hat{v}_\t\|^2$ is.
If in addition $\mu=0$, then 
\begin{align}
    \|\p_t \bar{v}_\t(t)\|=\|g(u_n,v_{n-1})\|\overset{\eqref{eq:v-disc-bound},\eqref{eq:uup}}\leq C.
\end{align}
 \end{subequations}
Combining \eqref{eq:v-disc-bound} and \eqref{eq:dtv-bound} we obtain \eqref{eq:Uni_bnd_v}.

\textbf{(Step 2) Bounds \eqref{eq:Uni_bnd_w}--\eqref{eq:Uni_bnd_w_H1}:} Proving \eqref{eq:Uni_bnd_w}, requires taking $\phi=w_n$ as a test function in \eqref{eq: time-discrete weak 1}.
 The arguments are identical to Step 2 in the proof of Lemma 4.3 in \cite{mitra2023wellposedness} and hence, will be omitted for the sake of brevity. For obtaining \eqref{eq:Uni_bnd_w_H1}, we insert $\phi=w_n-w_{n-1}=\Phi(u_n)-\Phi(u_{n-1})$ in \eqref{eq: time-discrete weak 1} to get
\begin{align}\label{eq:proof unif bound}
    \left(\frac{1}{\t}(u_n - u_{n-1}),\Phi(u_n)-\Phi(u_{n-1})\right) + (\nabla w_n, \nabla (w_n-w_{n-1})) = (f(v_{n-1})u_n, w_n-w_{n-1}).
\end{align}
 Noting that  $\p_t \Bar{w}_\t=(\Phi(u_n)-\Phi(u_{n-1}))/\t$ for $t\in (t_{n-1},t_n]$ and $L_\Phi:=\sup_{u\in [0,\Breve{u}]}\{\Phi'(u)\}<\infty$ from \eqref{eq:uup}, the first term in \eqref{eq:proof unif bound} is estimated as
\begin{subequations}
    \begin{align}
    \left(\frac{1}{\t}(u_n - u_{n-1}),\Phi(u_n)-\Phi(u_{n-1})\right) &\overset{\eqref{eq:uup}}\geq \frac{\t}{\sup\limits_{u\in [0,\Breve{u}]} \Phi'(u)}\left\|\frac{\Phi(u_n)-\Phi(u_{n-1})}{\t}\right\|^2\nonumber\\
    &= \frac{\t}{L_\Phi}  \|\p_t \Bar{w}_\t\|^2.
\end{align}
Using the identity $a(a-b)=\frac{1}{2}[a^2-b^2 +(a-b)^2]$, the second-term is estimated as 
\begin{align}
    (\nabla w_n, \nabla (w_n-w_{n-1}))= &\frac{1}{2}[\|\del w_n\|^2-\|\del w_{n-1}\|^2 + \|\del (w_n-w_{n-1})\|^2],
\end{align}
 Similarly as in Step 1, using that $\|u_n\|_{L^\infty(\Om)}<C$ by \eqref{eq:uup}, $\|v_n\|<C$ by \eqref{eq:v-disc-bound}, and that $f$ is a Lipschitz function by \ref{ass: 2}, we have that $\|f(v_{n-1})u_n\|<C$. Then, the final term is estimated as 
\begin{align}
    (f(v_{n-1})u_n, w_n-w_{n-1}))&\overset{\eqref{eq: Young}}\leq \frac{L_\Phi}{2} \|f(v_{n-1})u_n\|^2\t + \frac{\t}{2L_\Phi}\left\|\frac{w_n-w_{n-1}}{\t}\right\|^2\nonumber\\
    &\leq C\t + \frac{\t}{2L_\Phi} \|\p_t \Bar{w}_\t\|^2
\end{align}
\end{subequations}
Combining the above estimates and summing from $n=1$ to $n=N_\t$ we get 
\begin{align}\label{eq:w-disc-bound}
   \frac{1}{L_\Phi} \sum_{n=1}^{N_\t}\|\p_t \Bar{w}_\t\|^2\t + \|\del w_{N_\t}\|^2+ \sum_{n=1}^{N_\t}\|\del (w_n-w_{n-1})\|^2 \leq CT + \|\del \Phi(u_0)\|^2.
\end{align}
Since $\hat{w}_\t(t)=w_n$ for $t\in (t_{n-1},t_n]$, and $\bar{w}_\t$ is a convex combination of $w_n$ and $w_{n-1}$, similarly as in Step 1, we conclude that $\|\del \hat{w}_
t(t)\|$ and $\|\del \bar{w}_\t(t)\|$ are bounded for all $t\in [0,T]$. Finally, observing that $\int_0^T\|\p_t \Bar{w}_\t\|^2=\sum_{n=1}^{N_\t}\|\p_t \Bar{w}_\t\|^2\t$, we have \eqref{eq:Uni_bnd_w_H1}.
\end{proof}

\subsubsection{Convergence to the time-continuous solution if $u_0\in H^1_0(\Om)$}
We first prove the following result which will be used frequently:
\begin{lemma}[An important convergence result]\label{lemma:imprtnt_conv}
Let $\psi\in C^1(\R^+)$ be strictly increasing, convex in $[0,\e_0)$ for some $\e_0>0$, and 
assume that for $\psi_\Mini := \inf_{[\e_0,\infty)}\psi'$ one has $\psi_\Mini > 0$. For a measurable set $\om\subset\R^d$, let $\{\f_n\}_{n\in \N}\subset L^1(\om)$ be a sequence of non-negative functions such that $\|\psi(\f_n)-\psi(\f)\|_{L^1(\om)}\to 0$ for a fixed (non-negative) $\f\in L^1(\om)$. Then, $\|\f_n-\f \|_{L^1(\om)}\to 0$ as $n\to \infty$. 
\end{lemma}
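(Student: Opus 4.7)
The plan is to invoke Vitali's convergence theorem after extracting an a.e.-convergent subsequence and establishing equi-integrability of $\{\f_n\}$. Since $\psi\in C^1(\R^+)$ is strictly increasing, the inverse $\psi^{-1}:[\psi(0),\infty)\to\R^+$ is continuous. From $\psi(\f_n)\to\psi(\f)$ in $L^1(\om)$ we extract a (non-relabeled) subsequence along which $\psi(\f_n)\to\psi(\f)$ a.e.\ in $\om$; continuity of $\psi^{-1}$ then yields $\f_n=\psi^{-1}(\psi(\f_n))\to\f$ a.e.\ in $\om$.

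The next step is a pointwise bound controlling $\f_n$ by $\psi(\f_n)$. On the set $\{\f_n\geq\e_0\}$, the hypothesis $\psi'\geq\psi_\Mini>0$ on $[\e_0,\infty)$ gives $\psi(\f_n)-\psi(\e_0)\geq \psi_\Mini(\f_n-\e_0)$, hence
\begin{equation*}
    0\leq \f_n \leq \e_0+\psi_\Mini^{-1}\bigl|\psi(\f_n)-\psi(\e_0)\bigr|\quad\text{a.e.\ in }\om,
\end{equation*}
and the same bound holds trivially on $\{\f_n<\e_0\}$. Since $\psi(\f_n)\to\psi(\f)$ in $L^1(\om)$, the family $\{\psi(\f_n)\}$ is equi-integrable; assuming $|\om|<\infty$ (as in the paper's applications, where $\om$ sits inside the bounded Lipschitz domain $\Om$), adding the constant $\psi(\e_0)$ preserves equi-integrability, and the pointwise bound transfers it to $\{\f_n\}$.

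With a.e.\ convergence and equi-integrability on a set of finite measure in hand, Vitali's convergence theorem delivers $\f_n\to\f$ in $L^1(\om)$ along the chosen subsequence. To upgrade to the full sequence, I would use the standard subsequence principle: any subsequence of the original $\{\f_n\}$ still satisfies the hypotheses and so admits a further subsequence converging to $\f$ in $L^1(\om)$, which forces the whole sequence to converge. The only mild obstacle is the possible degeneracy of $\psi$ near $0$, where no positive lower bound on $\psi'$ is postulated and $\psi^{-1}$ may be steep; this is bypassed by splitting at the threshold $\e_0$, using the trivial $L^\infty$ bound $\f_n<\e_0$ on the degenerate region and the linear control through $\psi_\Mini$ on the non-degenerate one. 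Convexity of $\psi$ on $[0,\e_0)$ does not seem essential for this conclusion, but presumably plays a role elsewhere when the lemma is applied.
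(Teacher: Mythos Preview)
Your argument is correct (under the finite-measure assumption you flag, which the paper also needs implicitly). However, the approach is genuinely different from the paper's.

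The paper does not pass through subsequences or Vitali. Instead it builds an auxiliary function $\bar{\Psi}$ that agrees with $\psi-\psi(0)$ on $[0,\e_0)$ and is extended linearly with slope $\psi'(\e_0)$ beyond $\e_0$. This $\bar{\Psi}$ is convex, strictly increasing, vanishes at $0$, and satisfies $|\bar{\Psi}(a)-\bar{\Psi}(b)|\lesssim |\psi(a)-\psi(b)|$. Convexity and $\bar{\Psi}(0)=0$ give super-additivity, hence $\bar{\Psi}(|\f_n-\f|)\leq |\bar{\Psi}(\f_n)-\bar{\Psi}(\f)|$ pointwise; Jensen's inequality then yields $\bar{\Psi}\bigl(|\om|^{-1}\|\f_n-\f\|_{L^1}\bigr)\leq C|\om|^{-1}\|\psi(\f_n)-\psi(\f)\|_{L^1}\to 0$, and strict monotonicity of $\bar{\Psi}$ finishes the argument. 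So the paper \emph{does} use the convexity hypothesis in an essential way, precisely where you suspected it was superfluous.

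What each route buys: your Vitali argument is more standard and, as you observed, shows the convexity assumption on $[0,\e_0)$ is not actually needed for the conclusion. The paper's argument is more elementary in the sense that it avoids extracting subsequences and invoking Vitali, and it produces a direct quantitative inequality between $\|\f_n-\f\|_{L^1}$ and $\|\psi(\f_n)-\psi(\f)\|_{L^1}$ rather than a soft compactness conclusion. Both proofs rely on $|\om|<\infty$: yours for equi-integrability of constants, the paper's for the form of Jensen used.
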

\begin{proof}
Let $\bar{\Psi}\in C(\R^+)$ be defined as
\begin{equation*}
    \bar{\Psi}(\f) = \begin{cases} \psi(\f)-\psi(0) &\text{ for } \f\in [0,\varepsilon_0),\\ 
    \psi(\e_0)-\psi(0) + \psi'(\e_0) (\f-\e_0) &\text{ for } \f\geq \e_0.
    \end{cases}
\end{equation*}
It is straightforward to verify that $\bar{\Psi}$ is convex, strictly increasing, $\bar{\Psi}(0)=0$, and
\begin{align}\label{eq:Psi-psi-ineq}
    |\bar{\Psi}(\f_1)-\bar{\Psi}(\f_2)|\leq (\psi'(\e_0)/\psi_\Mini)|\psi(\f_1)-\psi(\f_2)| \text{ for all } \f_{1/2}\geq 0.
\end{align}
The inequality above follows from considering separately the cases $\f_{1/2}\leq \e_0$ which gives $|\bar{\Psi}(\f_1)-\bar{\Psi}(\f_2)|= |\psi(\f_1)-\psi(\f_2)|$; $\f_{1/2}\geq \e_0$ which gives $|\bar{\Psi}(\f_1)-\bar{\Psi}(\f_2)|= \psi'(\e_0) |\f_1-\f_2|\leq (\psi'(\e_0)/\psi_\Mini) |\psi(\f_1)-\psi(\f_2)|$; and $\f_1$, $\f_2$ being on different sides of $\e_0$ which gives also \eqref{eq:Psi-psi-ineq} by combining the estimates for the other two cases. Moreover,  using the super-additivity property \eqref{eq: superadd}  one has for $\f_n>\f$ that
$\bar{\Psi}(\f_n-\f)\leq \bar{\Psi}(\f_n)-\bar{\Psi}(\f)$, and by symmetry, we conclude that $\bar{\Psi}(|\f_n-\f|)\leq |\bar{\Psi}(\f_n)-\bar{\Psi}(\f)|$. Consequently,
\begin{equation*}
    \begin{split}
        \bar{\Psi} \brac{\frac{1}{|\om|} \int_{\om}|\f_n-\f|} &\overset{\eqref{eq: jensen}}\leq \frac{1}{|\om|} \int_{\om} \bar{\Psi}\brac{|\f_n - \f|} \leq\frac{1}{|\om|} \int_{\om} |\bar{\Psi}(\f_n) - \bar{\Psi}(\f)|\\
        &\leq  C \|\psi(\f_n) - \psi(\f)\|_{L^1(\om)}\longrightarrow 0.
    \end{split}
\end{equation*}
Since $\bar{\Psi}$ is strictly increasing, it follows that $\|\f_n-\f \|_{L^1(\omega)}\to 0$.
\end{proof}
The above result has previously been used in Lemma 3.3 of \cite{mitra2023wellposedness} to prove strong convergence of solutions, see also \cite{HISSINKMULLER}. Here, we use it in a similar way.

\begin{proof}[\underline{Proof of \eqref{eq:Conv-H1} in \Cref{theo:time_convergence}}]
Observe that $\bar{w}_\t \in H^1(Q)$ is uniformly bounded with respect to $\t$ if $u_0\in H^1_0(\Om)$ by Lemma \ref{lem:unif bdd w} since $\Phi(u_0)\in H^1_0(\Om)$ in this case. Hence, by the compact embedding  $H^1(Q)\hookrightarrow L^2(Q)$, there exists $w\in H^1(Q)$ such that along a sub-sequence of $\t$ converging to $0$,
\begin{subequations}
    \begin{align}
        &\bar{w}_\t\rightharpoonup w\ \text{ weakly in } H^1(Q),\\
        &\bar{w}_\t\longrightarrow w\ \text{ strongly in } L^2(Q).\label{eq:bar_w_conv}
    \end{align}
Define $u:=\Phi^{-1}(w)$ which is bounded in $[0,\Breve{u}]$ a.e. in $\Om$ for all $t>0$ due to \eqref{eq:uup}. We will  prove that
\begin{align}
        &\hat{w}_\t\longrightarrow w\ \text{ strongly in } L^2(Q)\label{eq:hat_w_conv}\\
        &\hat{u}_\t\longrightarrow u\  \text{ strongly in } L^2(Q).\label{eq:hat_u_conv}
\end{align}
\end{subequations}
The  convergence \eqref{eq:hat_w_conv} follows from \eqref{eq:time_interpolants} and \eqref{eq:bar_w_conv} since
\begin{align*}
    \int_0^T\|\hat{w}_\t-\hat{w}_\t\|^2&\overset{\eqref{eq:time_interpolants}}=\sum_{n=0}^N \int_{t_{n-1}}^{t_n}\left (\frac{t-t_{n-1}}{\t}\right)^2\|w_n-w_{n-1}\|^2= \frac{1}{3}\sum_{n=0}^N \|w_n-w_{n-1}\|^2 \t \nonumber\\
    &\overset{\eqref{eq:Poincare}}\leq\frac{C_\Om\t}{3}\sum_{n=0}^N \|\del(w_n-w_{n-1})\|^2\overset{\eqref{eq:w-disc-bound}}\leq  C\t\longrightarrow 0.
\end{align*}
To show \eqref{eq:hat_u_conv}, noting that $\Phi(\hat{u}_\t)=\hat{w}_\t$, we have
\[ \|\Phi(\hat{u}_\t)-\Phi(u)\|_{L^2(Q)}=\|\hat{w}_\t-w\|^2_{L^2(Q)}\longrightarrow 0,\]
which also implies that $\|\Phi(\hat{u}_\t)- \Phi(u)\|_{L^1(Q)}\to 0$. Hence, using \Cref{lemma:imprtnt_conv} with $\psi=\Phi$ gives that $\|\hat{u}_\t- u\|_{L^1(Q)}\to 0$ and since both $\hat{u}_\t,u\in L^\infty(Q)$, we have \eqref{eq:hat_u_conv}.

For the convergence of $v$, note that if $\mu=1$ then \eqref{eq:Uni_bnd_v} implies that $\bar{v}_\t\in H^1(0,T;H^{-1}(\Om))\cap L^2(0,T;H^1_0(\Om))=:{\cal W}$ is uniformly bounded with respect to $\t$. The space ${\cal W}$ is compactly embedded into $L^2(Q)$ and continuously into $C([0,T];L^2(\Om))$ (Aubin-Lions lemma). Hence, for $\mu=1$, there exists $v\in {\cal W}\subset C([0,T];L^2(\Om)) $ such that 
\begin{subequations}
\begin{align}
        &\bar{v}_\t\longrightarrow v\  \text{ strongly in } L^2(Q),\label{eq:bar_v_conv}\\
        &\hat{v}_\t\longrightarrow v\  \text{ strongly in } L^2(Q),\label{eq:hat_v_conv}
\end{align}
\end{subequations}
For $\mu=0$, let $v\in C([0,T];L^2(\Om))$ be the solution of $\p_t v=g(u,v)$ with $v(0)=v_0$. Then, 
\begin{align*}
    \frac{1}{2}\|(\bar{v}_\t- v)(T)\|^2&=\int_0^T (\bar{v}_\t- v,\p_t(\bar{v}_\t- v))\leq \|\bar{v}_\t- v\|_{L^2(Q)} \|\p_t(\bar{v}_\t- v)\|_{L^2(Q)}\\
    &\overset{\eqref{eq:Uni_bnd_v}} \leq C \|\p_t(\bar{v}_\t- v)\|_{L^2(Q)}.
\end{align*}
Using that $\p_t \bar{v}_\t(t)=(v_n-v_{n-1})/\t=g(u_n,v_{n-1})=g(\hat{u}_\t(t),v_{n-1})$ for $t\in (t_{n-1},t_n]$, one further estimates
\begin{align*}
    &\|\p_t(\bar{v}_\t- v)\|^2_{L^2(Q)}=\sum_{n=1}^{N_\t}\int_{t_{n-1}}^{t_n} \|g(\hat{u}_\t,v_{n-1})-g(u,v)\|^2 \\&\overset{\ref{ass: 2}}
\leq C\sum_{n=1}^{N_\t}\int_{t_{n-1}}^{t_n} \left[ \|\hat{u}_\t-u\|^2 + \|v_{n-1}-v\|^2\right]\nonumber\\
&\leq C\left(\|\hat{u}_\t-u\|^2_{L^2(Q)} + \int_0^T \|\bar{v}_\t-v\|^2 + \sum_{n=1}^{N_\t}\int_{t_{n-1}}^{t_n} \|v_{n-1}-\bar{v}_\t\|^2 \right).
\end{align*}
Note that $\|\hat{u}_\t-u\|^2_{L^2(Q)}\to 0$ from \eqref{eq:hat_u_conv}, and 
\begin{equation*}
    \begin{split}
        \sum_{n=1}^{N_\t}\int_{t_{n-1}}^{t_n}  \|v_{n-1}-\bar{v}_\t\|^2&\overset{\eqref{eq:time_interpolants}}= \sum_{n=1}^{N_\t}\int_{t_{n-1}}^{t_n}  \left(\frac{t-t_{n-1}}{\t}\right)^2\|v_n-v_{n-1}\|^2\\
        &= \frac{\t}{3}\sum_{n=1}^{N_\t} \|v_n-v_{n-1}\|^2\overset{\eqref{eq:v-disc-bound}}\leq C\t \longrightarrow 0,
    \end{split}
\end{equation*}
Hence, applying Gronwall's lemma we get that $\|(\bar{v}_\t- v)(T)\|\to 0$ which proves the strong convergence result in \eqref{eq:bar_v_conv}. The convergence of $\hat{v}_\t$ in  \eqref{eq:hat_v_conv} follows from \eqref{eq:v-disc-bound} similar to \eqref{eq:hat_w_conv}.

It is straightforward to show that $(u,v)$ is indeed a weak solution of \eqref{eq: main}, a detailed proof can be found in Theorem 3.1 in \cite{mitra2021existence}. Since $(\hat{u}_\t, \hat{w}_\t, \hat{w}_\t)$ is  bounded uniformly componentwise in $L^2(Q)$ for $\t$ small, and every converging subsequence of it converges to the unique limit $(u,v,w)$  weakly solving \eqref{eq: main}, along every sequence of $\t\to 0$  this limit is obtained.
\end{proof}

\subsubsection{Convergence to the time-continuous solution if $u_0\not\in H^1_0(\Om)$}

For less regular initial data we need to use the $L^1$-contraction principle, see \cite{otto1996l1} for the general idea, and \cite{HISSINKMULLER} for a proof for this specific case.
\begin{lemma}[$L^1$-contraction principle]\label{lemma:L1}
    Let $(u_1,v_1)$ and $(u_2,v_2)$  be the weak solutions of \eqref{eq: main} 
    corresponding to the initial data $u_1(0)=u_{1,0}$ and $u_2(0)=u_{2,0}$ and let $u_{1,0}, u_{2,0}$ satisfy  \ref{ass: 4}. Then, for any $t>0$
    \begin{align}
        \|(u_1-u_2)(t)\|_{L^1(\Om)}\leq \|u_{1,0}-u_{2,0}\|_{L^1(\Om)} + \int_0^t \|f(v_1)u_1 - f(v_2)u_2\|_{L^1(\Om)}.
    \end{align}
\end{lemma}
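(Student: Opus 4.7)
The plan is to adapt the doubling-of-variables $L^1$-contraction technique of Kruzhkov–Otto \cite{otto1996l1}, as used for a closely related system in \cite{HISSINKMULLER}. Subtracting the weak formulations of \eqref{eq: main 1} satisfied by $(u_1,v_1)$ and $(u_2,v_2)$, one obtains, for a.e.\ $s \in (0,T)$ and every $\f \in \H$,
\begin{equation*}
    \langle \p_s(u_1-u_2), \f\rangle + (\del(\Phi(u_1)-\Phi(u_2)), \del \f) = (f(v_1)u_1 - f(v_2)u_2, \f).
\end{equation*}
Since $\Phi$ is strictly increasing by \ref{ass: 1}, the sign of $u_1 - u_2$ coincides with the sign of $\Phi(u_1)-\Phi(u_2)$, and the latter belongs to $L^2(0,T;\H)$. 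This suggests testing with a regularised sign of $\Phi(u_1)-\Phi(u_2)$, which produces an admissible test function.

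Concretely, let $S_\e \in C^1(\R)$ be a non-decreasing, odd, bounded approximation of $\mathrm{sign}$ satisfying $|S_\e|\leq 1$, $S_\e'\geq 0$, $S_\e(0)=0$, and $S_\e(r) \to \mathrm{sign}(r)$ pointwise as $\e \to 0$. Then $\f = S_\e(\Phi(u_1)-\Phi(u_2)) \in L^2(0,T;\H)$ is admissible; inserting it and integrating over $s\in (0,t)$ produces three contributions. The diffusion contribution equals
\begin{equation*}
    \int_0^t\int_\Om S_\e'(\Phi(u_1)-\Phi(u_2))\, |\del(\Phi(u_1)-\Phi(u_2))|^2 \, \dd x \, \dd s \geq 0,
\end{equation*}
so moving it to the right-hand side and dropping it yields an inequality. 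The reaction contribution is bounded in absolute value by $\int_0^t \|f(v_1)u_1 - f(v_2)u_2\|_{L^1(\Om)}\, \dd s$ using $|S_\e|\leq 1$, and this bound is preserved as $\e \to 0$ by dominated convergence.

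The main obstacle is identifying the time-derivative contribution in the limit, since $\p_s u_j$ only lies in $L^2(0,T; H^{-1}(\Om))$ and a direct chain rule is not available. The standard remedy is either a Steklov time-averaging argument in the spirit of \cite{HISSINKMULLER}, or a Kruzhkov doubling of variables in time with a mollifier $\r_\a(s-s')$, evaluating the first equation at $(x,s)$ and the second at $(x,s')$ and passing to the diagonal $s' \to s$ afterwards. In either approach, sending $\e \to 0$ gives
\begin{equation*}
    \int_0^t \langle \p_s(u_1-u_2), S_\e(\Phi(u_1)-\Phi(u_2))\rangle\, \dd s \; \longrightarrow \; \|(u_1-u_2)(t)\|_{L^1(\Om)} - \|u_{1,0}-u_{2,0}\|_{L^1(\Om)},
\end{equation*}
where the strict monotonicity of $\Phi$ converts $\mathrm{sign}(\Phi(u_1)-\Phi(u_2))$ into $\mathrm{sign}(u_1-u_2)$ and one uses the distributional identity $\p_s(u_1-u_2)\,\mathrm{sign}(u_1-u_2) = \p_s|u_1-u_2|$. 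Combining this identification with the non-negativity of the diffusion term and the $L^1$-bound on the reaction term yields the stated contraction inequality.
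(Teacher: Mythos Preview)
Your approach is correct and matches precisely what the paper does: the paper does not give its own proof of this lemma but instead refers to \cite{otto1996l1} for the general technique and to \cite{HISSINKMULLER} for a proof in this specific setting, which is exactly the regularised-sign/Kruzhkov--Otto argument you outline. Your sketch contains the right ingredients (monotonicity of $\Phi$ to align signs, non-negativity of the diffusion contribution, and a time-regularisation step to justify the chain rule for $\p_s|u_1-u_2|$), so there is nothing to add.
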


\begin{lemma}[Convergence of the continuous solutions as $\e\to 0$]\label{lemma:eps-conv}
Let $(u,v)$ and $(u^\e,v^\e)$ be the weak solutions of \eqref{eq: main} corresponding to the initial conditions $u(0)=u_{0}$ and $u^\e(0)=u_{0}^\e$, where $u_{0}^\e$ is as in \Cref{theo:time_convergence}, and let $w=\Phi(u), w^\e=\Phi(u^\e)$. Then, for any $t>0$,
along any sequence of $\e$ converging to 0 we have
\begin{align}
         \|(u^\e-u)(t)\|_{L^2(\Om)} + \|(w^\e-w)(t)\|_{L^2(\Om)}+ \|v^\e-v\|_{L^2(Q)}\longrightarrow 0.
\end{align}
\end{lemma}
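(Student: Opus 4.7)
The plan is to combine the $L^1$-contraction from \Cref{lemma:L1} with a stability estimate for the $v$-equation, close the resulting coupled integral inequalities by Gronwall, and upgrade the $L^1$ conclusion to the $L^2$ convergences stated in the lemma via interpolation using the uniform $L^\infty$ bounds from \Cref{thm: main time 1}. Note that only the $u$-initial data differ (with $\|u_0^\e-u_0\|\leq \e$), so $(v^\e-v)(0)=0$.

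Writing $U:=u^\e-u$ and $V:=v^\e-v$, and using the pointwise estimate $|f(v^\e)u^\e-f(v)u|\leq f_\Maxi|U|+C\Breve{u}|V|$ that follows from \ref{ass: 2} and the bound $0\leq u^\e,u\leq\Breve{u}$ in \Cref{thm: main time 1}, \Cref{lemma:L1} yields
\begin{equation*}
\|U(t)\|_{L^1(\Om)} \leq |\Om|^{1/2}\e + C\int_0^t \brac{\|U(s)\|_{L^1(\Om)} + \|V(s)\|_{L^1(\Om)}}\,\dd s.
\end{equation*}

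For $V$ I would distinguish the two cases. If $\mu=0$, the equation reduces to $\p_t V = g(u^\e,v^\e)-g(u,v)$; integrating in time with $V(0)=0$ and using the Lipschitz continuity of $g$ in \ref{ass: 2} gives $\|V(t)\|_{L^1(\Om)} \leq g_\Maxi\int_0^t(\|U\|_{L^1(\Om)}+\|V\|_{L^1(\Om)})\,\dd s$, and a Gronwall argument applied to the coupled system then yields the claim with explicit rate $\e$. If $\mu=1$, I would test the difference equation with $V$ itself and use \ref{ass: 3} to produce $D_\Mini\|\nabla V\|^2$ on the left; the cross-term $((D(u^\e)-D(u))\nabla v,\nabla V)$ is controlled via Young's inequality, absorbing half of the diffusion and leaving the remainder $C\int_\Om |U|\,|\nabla v|^2$ thanks to Lipschitz continuity of $D$ and the $L^\infty$ bound on $U$. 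Combined with Lipschitz continuity of $g$, this gives
\begin{equation*}
\|V(t)\|^2 + D_\Mini\int_0^t \|\nabla V\|^2\,\dd s \leq C\int_0^t\brac{\|U\|^2+\|V\|^2}\,\dd s + C\int_0^t\!\int_\Om |U|\,|\nabla v|^2\,\dd x\,\dd s.
\end{equation*}

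The main obstacle is the last cross-term in the case $\mu=1$: since $|\nabla v|^2$ is only in $L^1(Q)$, a direct pointwise estimate does not close Gronwall. I plan to resolve this by a two-step argument: first extract a subsequence along which $U\to 0$ almost everywhere on $Q$ (available from compactness together with the $L^1$-contraction fed by the a priori energy bound on $V$ in the spirit of \Cref{lem:unif bdd w}), apply dominated convergence to conclude $\int_0^t\int_\Om |U|\,|\nabla v|^2\to 0$, and then close Gronwall to obtain strong $L^2(\Om)$ convergence along the subsequence; finally the uniqueness of the limit (from \cite{HISSINKMULLER,mitra2023wellposedness}) lifts the convergence to the full sequence. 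The $L^2(\Om)$ convergence of $U$ then follows by the interpolation $\|U\|^2\leq 2\Breve{u}\|U\|_{L^1(\Om)}$ using \Cref{thm: main time 1}, and the convergence of $w^\e-w$ follows from the local Lipschitz continuity of $\Phi$ on $[0,\Breve{u}]\subset [0,b)$ granted by \ref{ass: 1}, so that $\|w^\e-w\|\leq L_\Phi\|U\|\to 0$.
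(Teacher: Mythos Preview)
Your approach differs from the paper's in how the $v$-convergence is established. The paper does not write a stability estimate for the $v$-difference equation; instead it uses compactness. The continuous analogue of \eqref{eq:Uni_bnd_v} shows that $\{v^\e\}$ is bounded in $H^1(0,T;H^{-1}(\Om))\cap L^2(0,T;H^1_0(\Om))$ uniformly in $\e$ (the constants depend only on $\|v_0\|$ and $\Breve{u}$, not on the $u$-initial data), so along a subsequence $v^\e\to v$ in $L^2(Q)$. Only then is the $L^1$-contraction invoked, and Gronwall closes for $\|U(t)\|_{L^1}$ alone since $\int_0^t\|V\|_{L^1}$ is already known to vanish.

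Your coupled Gronwall argument works for $\mu=0$ and even yields an explicit $O(\e)$ rate, which the compactness route does not. For $\mu=1$, however, there are two genuine gaps. First, \ref{ass: 3} only asserts that $D$ is continuous, not Lipschitz, so the pointwise bound $|D(u^\e)-D(u)|\leq C|U|$ you rely on is not available and the cross-term does not reduce to $C\int_\Om|U|\,|\nabla v|^2$. Second, even granting that reduction, the proposed fix is circular: to obtain $U\to 0$ a.e.\ you appeal to the $L^1$-contraction ``fed by the a priori energy bound on $V$'', but an a priori bound only says $\|V\|_{L^2(Q)}\leq C$, and then the contraction inequality $\|U(t)\|_{L^1}\leq C\e+C\int_0^t(\|U\|_{L^1}+\|V\|_{L^1})$ is after Gronwall merely bounded, not small, unless $\|V\|_{L^1(Q)}\to 0$ is already known --- which is precisely what the $V$-estimate with the problematic cross-term was supposed to deliver. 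The way out is the paper's: obtain $v^\e\to v$ by compactness first (no Lipschitz condition on $D$ needed, only $D_\Mini\leq D\leq D_\Maxi$), after which the $u$-estimate decouples completely. Your final interpolation step $\|U\|^2\leq 2\Breve{u}\|U\|_{L^1}$ and the Lipschitz argument for $w^\e-w$ are correct and coincide with the paper's.
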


\begin{proof}
Observe that the uniform bound in \eqref{eq:Uni_bnd_v}  holds also for $z=v^\e$ with the constant $\bar{C}$ independent of $\e$. Hence, similar to \eqref{eq:bar_v_conv}, along a subsequence of $\e\to 0$, one has $\|v^\e-v\|_{L^2(Q)}\to 0$. Moreover, noting that $0\leq u(t),\, u^\e(t)\leq \Breve{u}<C$ a.e. in $\Om$ due to \eqref{eq:Uni_bnd_w}, one has by \Cref{lemma:L1} that 
\begin{align}
 \|(u^\e-u)(t)\|_{L^1(\Om)}&\leq \|u^\e_0-u_0\|_{L^1(\Om)} + \int_0^t \|f(v^\e)u^\e - f(v)u\|_{L^1(\Om)}\nonumber\\
 &\hspace{-2em}\leq |\Om|^{\frac{1}{2}}\|u^\e_0-u_0\| + \int_0^t \|(f(v^\e) - f(v))u^\e\|_{L^1(\Om)} + \int_0^t \|f(v)(u^\e-u)\|_{L^1(\Om)}\nonumber\\
  &\hspace{-2em}\overset{\ref{ass: 2}}\leq \e|\Om|^{\frac{1}{2}} \overset{|u^\e|<C}+ C\int_0^t \|v^\e-v\|_{L^1(\Om)} + f_\Maxi\int_0^t \|u^\e-u\|_{L^1(\Om)}.
\end{align}
Applying Gronwall's lemma \eqref{eq: discr Gronwall} along with $\|v^\e-v\|_{L^1(Q)}\to 0$ we get that $ \|(u^\e-u)(t)\|_{L^1(\Om)}\to 0$ for all $t>0$, which further implies that $ \|(u^\e-u)(t)\|_{L^2(\Om)}\to 0$ since $0\leq u(t),\, u^\e(t)\leq \Breve{u}<C$. It also implies that $ \|\Phi(u^\e)-\Phi(u)\|_{L^2(\Om)}\to 0$ since $\Phi$ is Lipschitz in $[0,\Breve{u}]$. This proves the result.
\end{proof}

\begin{proof}[\underline{Proof of \eqref{eq:Conv-notH1} in \Cref{theo:time_convergence}}]
We choose $\e>0$ small enough such that along the subsequence in \Cref{lemma:eps-conv} we have 
\begin{subequations}
\begin{align}
         \int_0^T \left[\|u^\e-u\|^2 + \|w^\e-w\|^2+ \|v^\e-v\|^2\right]\leq \tfrac{1}{2}\delta.
\end{align}
for some arbitrary $\d>0$.
For this fixed $\e>0$, noting that $u^\e_0\in H^1_0(\Om)$, one can choose a time-step $\t>0$ small enough such that by \eqref{eq:Conv-H1} one has
\begin{align}
        \sum_{n=0}^{N_\t} \int_{t_{n-1}}^{t_n}\left[\|u_n^\e -u^\e(t)\|^2 + \|w^\e_n-w^\e(t)\|^2+  \|v^\e_n-v^\e(t)\|^2\right]\dd t\leq \tfrac{1}{2}\delta.
\end{align}
Combining these estimates, one finds the desired subsequence $(\e,\t)\to (0,0)$ such that \eqref{eq:Conv-notH1} holds.
\end{subequations}
\end{proof}

\section{Linearisation} \label{sec: linear}

We have shown that the time-discretised system \eqref{eq: time-discrete weak system} is well-posed and that its solutions possess the qualitative behaviour we expect from the time-continuous system. In this section, we propose linearisation schemes and prove their well-posedness and convergence.   
Recall that $\Phi$ is possibly not Lipschitz continuous if $b=1$.
However,  $u_n$ takes values in $[0,\Breve{u}]$ where $\Breve{u}<b$  is a uniform a priori computable upper bound (see \Cref{rem: computable}), and $\Phi$ is Lipschitz in $[0,\Breve{u}]$. Hence, we can regularize $\Phi$ as follows.

\begin{definition}[Regularization of $\Phi$] \label{def: reg Phi}
If $b=1$ and with $\Breve{u}>0$ given in \eqref{eq:uup}, let the function $\Phib\colon\R^+\to \R^+$ be defined as
    \begin{equation}
    \begin{split}
        \Phib(u) = \begin{cases} \Phi(u), &\text{if } u \leq \Breve{u}, \\
        \Phi'(\Breve{u})(u-\Breve{u}) + \Phi(\Breve{u}), &\text{if } u\geq \Breve{u}.\end{cases}
    \end{split}
\end{equation}
If $b=\infty$, we set $\Phib=\Phi$.
\end{definition}

Recalling that $\Phi(u)=w$ posseses space regularity, 
we propose an iterative linearisation scheme to solve  \eqref{eq: time-discrete weak 1} which splits the equation into two coupled equations. The iterations are obtained by solving the following.  
\begin{problem*}[The splitting linearisation]
Let $n\in\N$ and $i \in \N_0$ be fixed, and assume $u_{n-1}, v_{n-1}\in L^2(\Om)$ and $(u^{i-1}_n, w^{i-1}_n) \in  L^2(\Om)\times \H$ be given, satisfying $u_{n-1}, u^{i-1}_n \leq \Breve{u}$. Find the pair $(u^i_n, w^i_n) \in L^2(\Om)\times \H$ such that, for all $\phi \in \H$ and $\xi \in L^2(\Om)$ it holds that 
\begin{subequations} \label{eq: linear weak}
    \begin{align}
        &\brac{\frac{1}{\t} \brac{\tilde{u}^i_n - u_{n-1}}, \phi} +  \brac{\nabla w^i_n, \nabla \phi}= \brac{f(v_{n-1}) \tilde{u}^i_n,\phi} 
        ,\label{eq: linear weak 1} \\
        & \brac{L^i_n (\tilde{u}^{i}_n - u^{i-1}_n), \xi} =  \brac{ w^i_n - \Phib\brac{u^{i-1}_{n}}, \xi}
        , \label{eq: linear weak 2} \\
        &\ u^i_n = \posi{\Tilde{u}^i_n} \text{ a.e. in } \Om, \label{eq: linear weak 3}
    \end{align}
\end{subequations}
 for some specific choice of a bounded function $L^i_n:\Om\to \R^+$, which depends only on iterates up to $u^{i-1}_n$ but not on $u^i_n$. The iteration starts with the initial guess $u^{0}_n=u_{n-1}$.
\end{problem*}

Such a splitting method was first proposed in \cite[Section 4.2]{cances2021error} for the $L$-scheme assuming that $\Phi$ is Lipschitz. Here, we generalize the results.

\begin{remark}[Positivity of $u^i_n$] \label{rem: un positive}
To shorten the proofs, throughout this section, we will simplify
\eqref{eq: linear weak} (where we first determine $\Tilde{u}^i_n$ and then set $u^i_n = \posi{\tilde{u}^i_n}$) by referring to $\tilde{u}^i_n$ interchangeably as $u^i_n$. All  inequalities and results in this section remain valid since
\begin{equation}
   \norm{u^i_n - u_n}_{L^p(\Omega)} =  \norm{\posi{\Tilde{u}^i_n} - u_n}_{L^p(\Omega)} \leq \norm{\Tilde{u}^i_n - u_n}_{L^p(\Omega)},
\end{equation}
for all $p\geq 1$.
Indeed, as
$u_n \geq 0$ which gives $\posi{\Tilde{u}^i_n} - u_n=\Tilde{u}^i_n - u_n$ if $\tilde{u}^i_n\geq 0$ and $\Tilde{u}^i_n - u_n< \posi{\Tilde{u}^i_n} - u_n\leq 0$ if $\tilde{u}^i_n< 0$. 
The reason we introduce the formulation \eqref{eq: linear weak} is that it guarantees that $u^i_n \geq 0$, which is important for the numerical implementation.
\end{remark}

\noindent Before proving results for particular linearisation schemes, we show that the regularization $\Phib$ does not alter the solution $u_n$. This is obvious for $b=\infty$, while for $b=1$ it follows from the proposition below. 

\begin{proposition}[Consistency of the regularized $\Phib$]
Let $b=1$, and $\Phib$ the regularized approximation of $\Phi$ given in \Cref{def: reg Phi}. Then, the solution of \eqref{eq: linear weak} coincides with the solution to \eqref{eq: time-discrete weak system}.
\end{proposition}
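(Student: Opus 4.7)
The plan is to interpret the statement as asserting that any fixed point of the iterative scheme \eqref{eq: linear weak}, or equivalently the limit of its iterates as $i\to\infty$, coincides with the unique solution $(u_n,w_n)$ of the time-discrete problem \eqref{eq: time-discrete weak system}. Call such a candidate $(u_n^\star,w_n^\star)\in L^2(\Om)\times H^1_0(\Om)$.

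First I would pass to the limit in \eqref{eq: linear weak 1}--\eqref{eq: linear weak 2}. At a fixed point $\tilde u_n^i=u_n^{i-1}$, so the left-hand side of \eqref{eq: linear weak 2} vanishes (the boundedness of $L_n^i$ will be established when the particular schemes are analysed), forcing the algebraic identity $w_n^\star=\Phib(u_n^\star)$ a.e.\ in $\Om$. The remaining equation \eqref{eq: linear weak 1} reads exactly like \eqref{eq: time-discrete weak 1} but with $\Phib(u_n^\star)$ in place of $\Phi(u_n^\star)$, and the positive-part step \eqref{eq: linear weak 3} ensures $u_n^\star\ge 0$ a.e.

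The key step is then the a priori bound $0\le u_n^\star\le \Breve u$ a.e.\ in $\Om$: once this is established, Definition \ref{def: reg Phi} gives $\Phib(u_n^\star)=\Phi(u_n^\star)$, so $(u_n^\star,w_n^\star)$ is a weak solution of \eqref{eq: time-discrete weak 1}, and the uniqueness part of Lemma \ref{lem: exist time-discrete} identifies it with $(u_n,w_n)$. To get the bound, I would observe that $\Phib$ shares all the structural properties of $\Phi$ that were used in Section \ref{sec: time-discrete}: it is continuous, strictly increasing, satisfies $\Phib(0)=0$, and has a positive, bounded derivative (in fact $\Phib$ is globally Lipschitz, which only simplifies matters compared to $\Phi$). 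Hence the boundedness argument of Lemmas \ref{lem: un-bdd-pos} and \ref{lem: max_un} applies verbatim to the system in which $\Phi$ is replaced by $\Phib$, and yields $u_n^\star\le \Phib^{-1}(\tilde C)$ with the constant $\tilde C$ in \eqref{eq:LinfBound2}. Because $\Phib$ agrees with $\Phi$ on $[0,\Breve u]$ and $\Phi(\Breve u)=\tilde C$ by construction, we have $\Phib^{-1}(\tilde C)=\Breve u$, which closes the bound.

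The main obstacle is making rigorous that the truncation does not spoil the test-function machinery underpinning Lemma \ref{lem: max_un}. Concretely, one has to verify that the induction step $\Phib(u_n^\star)\le \tilde\omega$ survives: since $\Phib$ is linear, strictly increasing, with positive slope beyond $\Breve u$, the function $\phi=\posi{\Phib(u_n^\star)-\tilde\omega}$ lies in $H^1_0(\Om)$ and the monotonicity
\[
\bigl(u_n^\star-\Phib^{-1}(\tilde\omega)\bigr)\,\posi{\Phib(u_n^\star)-\tilde\omega}\ge 0
\]
still holds. No other ingredient in Section \ref{sec: time-discrete} relies on the singular behaviour of $\Phi$ near $b=1$, so the proof carries through and the proposition follows.
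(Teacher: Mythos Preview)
Your overall strategy---show that any fixed point $u_n^\star$ of the regularised scheme satisfies $u_n^\star\le\Breve u$, then conclude $\Phib(u_n^\star)=\Phi(u_n^\star)$---is sound in spirit, but the claim that the bounds of Lemmas~\ref{lem: un-bdd-pos} and~\ref{lem: max_un} transfer ``verbatim'' to $\Phib$ has a real gap. The explicit bound in Lemma~\ref{lem: max_un} is obtained with the supersolution $-\Delta\tilde\omega=f_\Maxi$, i.e.\ with constant $C=1$, and the justification for taking $C=1$ is Lemma~\ref{lem: un bdd1}, whose conclusion $u_n\le 1-\delta$ rests on $\Phi^{-1}$ mapping into $[0,1)$. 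For $\Phib$ this fails: $\Phib$ is globally Lipschitz and $\Phib^{-1}$ is unbounded, so the analogue of Lemma~\ref{lem: un bdd1} only yields $u_n^\star\le\Phib^{-1}(\|\tilde\omega\|_\infty)$ with $-\Delta\tilde\omega=C_1 f_\Maxi$, where $C_1$ is the coarse bound from Lemma~\ref{lem: un-bdd-pos}. You therefore cannot replace $C_1$ by $1$, and the resulting estimate gives only $u_n^\star\le\Phib^{-1}\bigl(\|\Phi(u_0)\|_\infty+\tfrac{\mathrm{diam}(\Omega)^2}{2d}C_1 f_\Maxi\bigr)$, which in general exceeds $\Breve u$. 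The obstacle you identify (admissibility of the test function) is not the real one; it is this bootstrap step that breaks.

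The paper sidesteps the issue entirely by reversing the direction of the argument. Rather than bounding the $\Phib$-solution, it observes that the $\Phi$-solution $u_n$---for which $u_n\le\Breve u$ is \emph{already} established in Theorem~\ref{thm: main time 1}---automatically satisfies $\Phib(u_n)=\Phi(u_n)$ and hence solves the $\Phib$-problem as well. Uniqueness of the latter (Lemma~\ref{lem: exist time-discrete}, whose proof only needs monotonicity of the nonlinearity and so applies to $\Phib$) then forces the two solutions to coincide. This two-line argument avoids re-deriving any a~priori bounds and is the cleaner route here; your approach would need an additional idea (e.g.\ a fixed-point or continuation argument on the constant $C$) to close.
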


\begin{proof}
Suppose $u_n$ and $\Tilde{u}_n$ are the weak solution of
\begin{align}
        \frac{1}{\t}(\Tilde{u}_n-u_{n-1})&=\D  \Phib(\Tilde{u}_n) + f(v_{n-1}) \Tilde{u}_n,\label{eq: uregL}\\
        \frac{1}{\t}(u_n-u_{n-1})&=\D  \Phi(u_n) + f(v_{n-1}) u_n.\label{eq: uregno}
\end{align}
\noindent Since $u_n \leq \Breve{u}$ by \Cref{thm: main time 1}, one has $\Phib(u_n) = \Phi(u_n)$, i.e. 
$u_n$ is a solution of Equation \eqref{eq: uregL}. However, this solution is unique due to 
\Cref{thm: main time 1} which implies that $\Tilde{u}_n = u_n$.
\end{proof}

\noindent To show that the linearisation scheme is well-defined, we  prove that if 
it converges, the limit is indeed a solution of the time-discretised equation \eqref{eq: time-discrete weak 1}.

\begin{proposition}[Consistency of the linearisation scheme] \label{prop: lin conv sol}
    Let $u^i_n$ be uniformly bounded with respect to $i\in \N$ in $L^2(\Omega)$, $u^i_n \to \Tilde{u}_n$ strongly in $L^1(\Omega)$, and $w^i_n \to \Tilde{w}_n$ strongly in $H^1_0(\Omega)$. Then $\Tilde{u}_n$ is the weak solution to the  time-discretised equation \eqref{eq: time-discrete weak 1} and $\Tilde{w}_n=\Phi(\Tilde{u}_n)$ a.e. in $\Om$.
\end{proposition}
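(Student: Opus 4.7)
The plan is to pass to the limit $i\to\infty$ in the two equations~\eqref{eq: linear weak 1} and~\eqref{eq: linear weak 2} of the splitting linearisation, using the hypothesised convergences of $u^i_n$ and $w^i_n$ together with the a priori pointwise bound $u^i_n\leq\Breve{u}$ that the scheme carries along. Throughout, following \Cref{rem: un positive}, I identify $\Tilde{u}^i_n$ with $u^i_n$.

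First, I would upgrade the $L^1$ convergence of $u^i_n$ to $L^p(\Omega)$ convergence for every $p\in[1,\infty)$. Extracting an a.e. convergent subsequence from the $L^1$-convergent one and using the $L^\infty$ bound $u^i_n\leq\Breve{u}$, dominated convergence gives the upgrade along the subsequence; uniqueness of the limit promotes this to the full sequence. In particular $u^i_n\to\Tilde{u}_n$ strongly in $L^2(\Omega)$, and the limit inherits $\Tilde{u}_n\leq\Breve{u}$.

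Next, I would take an arbitrary $\phi\in H^1_0(\Omega)$ in~\eqref{eq: linear weak 1} and send $i\to\infty$. The mass term and the reaction term (which is harmless because $f$ is bounded by~\ref{ass: 2}) pass by the $L^2$ convergence just obtained; the gradient term passes by the $H^1_0$ convergence of $w^i_n$. This produces the desired weak form of~\eqref{eq: time-discrete weak 1} with $\Tilde{w}_n$ in place of $\Phi(\Tilde{u}_n)$. To close the argument I would pass to the limit in~\eqref{eq: linear weak 2} tested against $\xi\in L^2(\Omega)$: boundedness of $L^i_n$ combined with $u^i_n,u^{i-1}_n\to\Tilde{u}_n$ in $L^2$ sends the left-hand side to zero, while on the right the $L^2$ convergence of $w^i_n$ and the global Lipschitz continuity of $\Phib$ (from \Cref{def: reg Phi}) yield $\Phib(u^{i-1}_n)\to\Phib(\Tilde{u}_n)$ strongly in $L^2(\Omega)$. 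The limit identity then reads $\Tilde{w}_n=\Phib(\Tilde{u}_n)$, which coincides with $\Phi(\Tilde{u}_n)$ because $\Tilde{u}_n\leq\Breve{u}$. Substituting back closes the argument.

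The main technical hinge is the treatment of $\Phib(u^{i-1}_n)$. The global Lipschitz extension introduced in \Cref{def: reg Phi} is precisely what prevents the singularity of $\Phi$ at $b$ from obstructing this limit passage; without it one would have to transport strong convergence through a possibly unbounded nonlinearity, which would require additional a priori bounds on the iterates. A minor point to track is that $u^{i-1}_n$ converges to the same limit $\Tilde{u}_n$ as $u^i_n$, which is immediate from convergence of the sequence itself.
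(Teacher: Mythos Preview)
Your strategy is the same as the paper's: pass to the limit in \eqref{eq: linear weak 1} and \eqref{eq: linear weak 2}, exploit the global Lipschitz continuity of $\Phib$ to handle the nonlinear term, deduce $\Tilde{w}_n=\Phib(\Tilde{u}_n)$, and substitute back to recover \eqref{eq: time-discrete weak 1}.

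The one point that deserves scrutiny is your first step. You invoke a uniform pointwise bound $u^i_n\leq\Breve{u}$ to upgrade $L^1$ convergence to strong $L^2$ convergence. That bound appears as an \emph{input assumption} in the Problem statement for the splitting scheme, but it is not among the hypotheses of the proposition you are proving, and the paper nowhere verifies that the iteration actually propagates it in the degenerate case. The paper avoids this by observing that uniform $L^2$-boundedness together with strong $L^1$-convergence already gives \emph{weak} $L^2$-convergence of $u^i_n$ (and hence of $\Phib(u^i_n)$ and $L^i_n(u^i_n-u^{i-1}_n)$, since $\Phib$ is Lipschitz with $\Phib(0)=0$ and $L^i_n$ is bounded). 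Weak convergence is all that is needed to pass to the limit against test functions in $L^2(\Omega)$, so the argument closes using only the stated hypotheses. Your version is correct provided the $L^\infty$ bound on the iterates holds, but as written it establishes a variant of the proposition with an extra assumption rather than the proposition exactly as stated. The identification $\Phib(\Tilde{u}_n)=\Phi(\Tilde{u}_n)$ can likewise be obtained without tracking the $L^\infty$ bound through the limit: once $\Tilde{u}_n$ solves the regularised time-discrete equation, uniqueness forces $\Tilde{u}_n=u_n\leq\Breve{u}$.
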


\Cref{thm: conv L-scheme,thm: conv M-scheme} will show that the hypotheses of \Cref{prop: lin conv sol} are indeed satisfied for the L- and M-schemes. Hence, the iterates $(u^i_n,w^i_n)$ converge to the time-discrete solutions.

\begin{proof}
First we observe that $(u_n^i,\phi)\to (\Tilde{u}_n,\phi)$ for all $\phi\in L^2(\Om)$, since $u^i_n$ is bounded in $L^2(\Om)$ and  $u^i_n \to \Tilde{u}_n$ strongly in $L^1(\Om)$. Hence,  taking the limit in \eqref{eq: linear weak 1} implies that 
 \begin{equation} \label{eq: linear weak 1 limit}
     \brac{h_{n-1}\Tilde{u}_n - u_{n-1}, \phi} + \t \brac{\nabla \Tilde{w}_n, \nabla \phi} = 0\qquad \forall \phi \in \H,
 \end{equation}
where $h_{n-1}$ is defined in \eqref{eq:def_hn}. Similarly, taking the limit $i \to \infty$ in \eqref{eq: linear weak 2} we get
 \begin{equation}
     \brac{\Tilde{w}_n - \Phib(\Tilde{u_n}), \xi} = 0 \qquad \forall \xi \in L^2(\Om).
 \end{equation}
Here, we used that  $\Phib$ is Lipschitz continuous implying that  $(\Phib(u^i_n),\xi)\to (\Phib(\tilde{u}_n),\xi)$, and that $L^i_n$ is bounded which yields 
$\brac{L^i_n \brac{u^i_n - u^{i-1}_n}, \xi} \to 0$ for all $\xi \in L^2(\Om)$. We conclude that $\Tilde{w}_n = \Phib(\Tilde{u}_n)$ a.e., which allows us to substitute it back into Equation \eqref{eq: linear weak 1 limit} and hence, 
\begin{equation}
    \brac{h_{n-1}\Tilde{u}_n - u_{n-1}, \phi} + \t \brac{\nabla \Phib\brac{\Tilde{u}_n}, \nabla \phi} = 0\qquad \forall \phi \in \H.
\end{equation}
This coincides with the time-discretised equation for $u_n$, and thus $\Tilde{u}_n = u_n$, as solutions are unique.
\end{proof}

We can identify the linearisation schemes mentioned in \Cref{sec: 1}
as special cases of \eqref{eq: linear weak}:
\begin{subequations}\label{eq:different_schemes}
\begin{align}
    &\textbf{Newton scheme}: L^i_n:= \Phib'(u^{i-1}_n),\\
    &\textbf{L-scheme}: \qquad \quad L^i_n:= L, \quad \text{for a constant } L>0,\label{eq:different_schemesL}\\
    &\textbf{M-scheme}: \qquad \;\;L^i_n:= \max\{\Phib'(u^{i-1}_n)+ M\t^\g ,\, 2M \t^\g \},  \text{ for constants } M>0,\; \g\in (0,1].\label{eq:different_schemesM}
\end{align}
\end{subequations}
In the sequel, we consider the L- and M-schemes as our main focus will be on degenerate problems.
We denote the errors of the iterates at the $n^{\text{th}}$ time step by 
\begin{align}\label{eq:def_errors}
    e^i_u = u^i_n - u_n, \quad e^i_w = w^i_n - w_n,
\end{align}
where $u_n, w_n, u_n^i, w_n^i$ are the solutions of \eqref{eq: time-discrete} and \eqref{eq: linear} respectively. 

The following theorems provide the main convergence results for the L- and M-scheme, their proofs are given in Subsections \ref{sec: Lscheme} and \ref{sec: Mscheme}.
The results for both schemes are similar, but the proofs for the L-scheme are more straightforward.

\begin{theorem}[Convergence of the L-scheme] \label{thm: conv L-scheme}
For $\t< 1\slash f_\Maxi$ there exist unique solutions $\{(u^i_n, w^i_n)\}_{i\in\N}\subset L^2(\Om)\times \H$ of \eqref{eq: linear weak} with \eqref{eq:different_schemesL}, i.e. $L^i_n:= L$. Furthermore, if $L>\sup \Phib'$, then 
$u^i_n \to u_n$ in $L^1(\Om)$ and $w^i_n \to w_n$ in $\H$.
\\
For non-degenerate problems, i.e. if $\inf\Phi'=\phi_\Mini>0$, the error-norm is a strict contraction $$\|(e^i_u,e^i_w)\|_L\leq \a\|(e^{i-1}_u,e^{i-1}_w)\|_L$$ with   rate
$\alpha = \sqrt{\frac{L}{L+ \phi_{\Mini}}},$
where
\begin{equation}\label{eq:norm_Lscheme}
    \norm{\brac{e^i_u, e^i_w}}_{L}^2 \coloneqq \int_\Om h_{n-1}|e^i_u|^2 + \frac{2\t}{L + \phi_\Mini} \norm{\nabla e^i_w}^2_{L^2(\Om)}.
\end{equation}
\end{theorem}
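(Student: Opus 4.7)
The plan is to proceed in three stages: well-posedness of each iterate, strict contraction in the non-degenerate case, and convergence of the iterates in general.

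For well-posedness of each iterate, I would use the algebraic relation \eqref{eq: linear weak 2} to express $\tilde{u}^i_n = u^{i-1}_n + (w^i_n - \Phib(u^{i-1}_n))/L$ and substitute this into the weak form \eqref{eq: linear weak 1}. This produces a linear elliptic problem for $w^i_n \in H^1_0(\Om)$ with bilinear form $\tau(\nabla w^i_n,\nabla\phi) + \frac{1}{L}(h_{n-1}w^i_n,\phi)$. Since $\tau < 1/f_\Maxi$ forces $h_{n-1} > 0$ on $\Om$, this form is continuous and coercive on $H^1_0(\Om)$, and Lax--Milgram delivers a unique $w^i_n$; then $\tilde{u}^i_n \in L^2(\Om)$ is recovered via \eqref{eq: linear weak 2}.

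For the contraction in the non-degenerate case, I would first subtract the time-discrete equations (using $w_n = \Phib(u_n)$, justified since $u_n \le \Breve{u}$ by \Cref{thm: main time 1}) from \eqref{eq: linear weak} to obtain the error relations
\begin{equation*}
(h_{n-1}\tilde{e}^i_u,\phi) + \tau(\nabla e^i_w,\nabla\phi)=0 \quad \forall \phi\in H^1_0(\Om), \qquad L(\tilde{e}^i_u - e^{i-1}_u) = e^i_w - \delta\Phib,
\end{equation*}
where $\delta\Phib := \Phib(u^{i-1}_n)-\Phib(u_n)$. The key test is $\phi = e^i_w$ in the first relation, which is legitimate because the splitting formulation guarantees $e^i_w \in H^1_0(\Om)$, yielding $\int_\Om h_{n-1}\tilde{e}^i_u\,e^i_w = -\tau\|\nabla e^i_w\|^2$. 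Eliminating $\tilde{e}^i_u$ through the algebraic relation and then invoking the monotonicity bound $\int_\Om h_{n-1}\delta\Phib\,e^{i-1}_u \ge \phi_\Mini \int_\Om h_{n-1}|e^{i-1}_u|^2$ together with the Lipschitz bound $|\delta\Phib| \le L_\Phi |e^{i-1}_u|$ (where $L_\Phi := \sup\Phib' < L$), a Young-inequality argument with weights tuned to the $(L+\phi_\Mini)^{-1}$-factor appearing in \eqref{eq:norm_Lscheme} should produce the claimed contraction $\alpha^2 = L/(L+\phi_\Mini)$.

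For the general, possibly degenerate case where $\phi_\Mini$ may vanish, no strict contraction is available. I would instead establish a monotone energy-dissipation relation from the error equations, which shows that $\{w^i_n\}$ is Cauchy in $H^1_0(\Om)$ with some limit $\tilde{w}_n$. Applying \Cref{lemma:imprtnt_conv} to $\Phib$ (using that the relation \eqref{eq: linear weak 2} forces $\Phib(u^{i-1}_n)\to \tilde{w}_n$ in $L^2$) then gives strong $L^1$-convergence of $u^i_n$ to some $\tilde{u}_n$, and \Cref{prop: lin conv sol} identifies $(\tilde{u}_n,\tilde{w}_n)$ with the unique time-discrete solution.

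The main obstacle is the sharp contraction rate: the cross-terms involving $\delta\Phib\,e^{i-1}_u$ and $\delta\Phib\,e^i_w$ must be absorbed in a precisely balanced way so that the constants match $L/(L+\phi_\Mini)$, which dictates the specific weighting in \eqref{eq:norm_Lscheme}. A secondary difficulty is the degenerate case, where strong convergence of $u^i_n$ must be extracted from the $H^1_0$-convergence of $w^i_n$ via \Cref{lemma:imprtnt_conv}, rather than from a direct Banach-space contraction.
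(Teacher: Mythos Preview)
Your well-posedness argument via Lax--Milgram after substituting the algebraic relation into the PDE matches the paper exactly. For the non-degenerate contraction, your route via Young's inequality differs from the paper's: instead of Young, the paper tests \eqref{eq: linear weak 2} with $\xi = h_{n-1}e^i_u$ (which, combined with $\phi=e^i_w$ in \eqref{eq: linear weak 1}, produces the term $\int_\Om h_{n-1}\,\delta\Phib\,e^i_u$, not the $\delta\Phib\,e^{i-1}_u$ term you cite) and then applies the exact identity $(a-b)a=\tfrac12(a^2-b^2+(a-b)^2)$ with $a=e^i_u$, $b=e^{i-1}_u$. Both approaches can reach the stated rate; in fact a clean Young argument on $\int_\Om h_{n-1}e^i_u\bigl(L-(\delta\Phib/\delta u)^{i-1}\bigr)e^{i-1}_u$ even yields the slightly sharper constant $\sqrt{(L-\phi_\Mini)/(L+\phi_\Mini)}$.

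The genuine gap is in the degenerate case. The paper's identity produces, besides the telescoping of $\int_\Om h_{n-1}|e^i_u|^2$ and the summability of $\|\nabla e^i_w\|^2$, an additional summable term $\tfrac{\varepsilon}{2}\int_\Om h_{n-1}|e^i_u-e^{i-1}_u|^2$ with $\varepsilon=L-\sup\Phib'>0$. This forces $u^i_n-u^{i-1}_n\to0$ in $L^2$, which is precisely what is needed to turn the algebraic relation \eqref{eq: linear weak 2} into $\Phib(u^{i-1}_n)\to w_n$. A Young-based energy inequality loses this increment control: it only gives that $\int_\Om h_{n-1}|e^i_u|^2$ is non-increasing and $\|\nabla e^i_w\|\to 0$, neither of which implies $u^i_n-u^{i-1}_n\to 0$. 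Hence your assertion that ``\eqref{eq: linear weak 2} forces $\Phib(u^{i-1}_n)\to\tilde w_n$'' is unjustified as written. To close the argument in the degenerate regime you need the algebraic identity (or some equivalent device recovering $\|e^i_u-e^{i-1}_u\|\to 0$). Finally, note that the paper obtains directly $e^i_w\to 0$ in $H^1_0(\Om)$, so the limit is $w_n$ without invoking \Cref{prop: lin conv sol}; your indirect identification via \Cref{prop: lin conv sol} is not wrong in principle, but it already presupposes the $L^1$-convergence of $u^i_n$ that remains to be established.
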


For the M-scheme, we need to impose an additional assumption.

\begin{enumerate}[label=(A\theAssEx)]
 \setlength\itemsep{-0.2em}
 \item For a given $n\in \N$, there exists $\Lambda \geq 0$ and $\g\in (0,1]$ such that $\|u_{n} - u_{n-1}\|_{L^{\infty}(\Om)} \leq \Lambda \t^\g $.\label{ass: extra} \stepcounter{AssEx}
\end{enumerate}

\begin{remark}[Assumption \ref{ass: extra}]
 Assumption \ref{ass: extra}   was used in \cite{MITRA20191722} with $\g=1$ in the context of nonlinear diffusion problems, and this property was proven for a particular case in Proposition 3.1, but not for porous medium type diffusion. Note that \ref{ass: extra} with $\g=1$ is the time-discrete counterpart of the regularity assumption $\p_t u\in L^\infty(\Om)$. But for degenerate problems this is typically not satisfied. However, solutions of porous medium type equations are H\"older continuous, and for degenerate and singular systems of the form \eqref{eq: main}, the H\"older continuity of solutions was shown in \cite{HISSINKMULLER2}. Hence, Assumption \ref{ass: extra} is expected to hold as a time-discrete counterpart of the H\"older continuity with exponent $\g\in (0,1]$.
\end{remark}

\begin{theorem}[Convergence of the M-scheme]\label{thm: conv M-scheme}
For $\t< 1\slash f_\Maxi$ there exist unique solutions $\{(u^i_n, w^i_n)\}_{i\in\N}\subset L^2(\Om)\times \H$ of \eqref{eq: linear weak} \eqref{eq: linear weak} with \eqref{eq:different_schemesM}. Furthermore, assume that \ref{ass: extra} holds, take $M>M_{0}\coloneqq \|\Phib^{\prime}\|_{\rm Lip} \Lambda$,  and let $\{u_n^i\}_{i\in\N}$ satisfy 
\begin{align}\label{eq: M-scheme assumption}
   \|u^i_n-u_n\|_{L^\infty(\Om)} \leq \Lambda \t^\g  \text{ for all } i\in \N.
\end{align}
Then, 
$u^i_n \to u_n$ in $L^1(\Om)$ and $w^i_n \to w_n$ in $\H$. \\
For non-degenerate problems, i.e. if $\inf\Phi'=\phi_\Mini>0$, and if $\t < \left(\phi_\Mini/M\right)^{\frac{1}{\g}}$, then 
the error-norm is a strict contraction, 
$$\|(e^i_u,e^i_w)\|_M\leq \a\|(e^{i-1}_u,e^{i-1}_w)\|_M,$$ 
with rate
$\a= \frac{2M\t^\g }{\phi_\Mini+M\t^\g },$
where
\begin{equation}\label{eq:cond_convM}
    \norm{\brac{e^i_u, e^i_w}}_{M}^2 \coloneqq \int_\Om h_{n-1}|e^i_u|^2 + \frac{2\t}{\phi_{\Mini} + M\t^\g } \norm{\nabla e^i_w}^2_{L^2(\Om)}.
\end{equation}
\end{theorem}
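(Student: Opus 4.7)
The plan mirrors the L-scheme argument but additionally exploits the Newton-like first-order cancellation built into $L^i_n = \max\{\Phib'(u^{i-1}_n) + M\t^\g,\,2M\t^\g\}$. As a first step, I would establish well-posedness of each iterate $(\tilde u^i_n,w^i_n)$ via Lax--Milgram: substituting \eqref{eq: linear weak 2} into \eqref{eq: linear weak 1} yields a linear elliptic equation for $w^i_n \in H^1_0(\Om)$ whose bilinear form is continuous and coercive because $L^i_n \geq 2M\t^\g > 0$ and $h_{n-1} > 0$ for $\t < 1/f_\Maxi$. Uniqueness of $\tilde u^i_n$ then follows from the algebraic relation in \eqref{eq: linear weak 2}.

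To obtain the contraction estimate, I subtract the time-discrete problem from \eqref{eq: linear weak} to get, for the errors in \eqref{eq:def_errors},
\begin{align*}
(h_{n-1} e^i_u,\f) + \t(\del e^i_w,\del \f) &= 0,\\
(L^i_n(e^i_u - e^{i-1}_u),\xi) &= (e^i_w - (\Phib(u^{i-1}_n) - \Phib(u_n)),\xi).
\end{align*}
Testing the first with $\f = e^i_w$ and the second with $\xi = h_{n-1} e^i_u$, and combining, gives
\begin{equation*}
(L^i_n h_{n-1} e^i_u,e^i_u) + \t\|\del e^i_w\|^2 = ((L^i_n - \Phib'(u^{i-1}_n)) h_{n-1} e^{i-1}_u,e^i_u) + (h_{n-1} R^i,e^i_u),
\end{equation*}
where I have Taylor-expanded $\Phib(u^{i-1}_n) - \Phib(u_n) = \Phib'(u^{i-1}_n) e^{i-1}_u - R^i$ with $|R^i| \leq \tfrac{1}{2}\|\Phib'\|_{\rm Lip}(e^{i-1}_u)^2$. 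The structural property $L^i_n - \Phib'(u^{i-1}_n) \in [M\t^\g,\,2M\t^\g]$ cancels the Newton first-order piece up to a prefactor of size $\t^\g$, while assumption \eqref{eq: M-scheme assumption} sharpens the remainder bound to $|R^i| \leq \tfrac{1}{2}M_0\t^\g |e^{i-1}_u|$ with $M_0 = \|\Phib'\|_{\rm Lip}\Lambda$. This is the M-scheme analogue of the cancellation used for the direct formulation in \cite{MITRA20191722}.

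Applying Young's inequality to the right-hand side and using, in the non-degenerate setting, the lower bound $L^i_n \geq \phi_\Mini + M\t^\g$, the restriction $\t < (\phi_\Mini/M)^{1/\g}$, and $M > M_0$, I would arrive at $\|(e^i_u,e^i_w)\|_M \leq \a \|(e^{i-1}_u,e^{i-1}_w)\|_M$ with $\a = 2M\t^\g/(\phi_\Mini + M\t^\g) < 1$, hence strong convergence in the stated norms. In the general (possibly degenerate) case there is no strict contraction; instead the identity above forces a monotone decrease of a suitable energy, which combined with \Cref{lemma:imprtnt_conv} delivers $u^i_n \to u_n$ in $L^1(\Om)$ and $w^i_n \to w_n$ in $H^1_0(\Om)$, and \Cref{prop: lin conv sol} identifies the limit as the weak time-discrete solution. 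The main obstacle is the tightness of the absorption step: the Taylor remainder contributes a term of order $M_0\t^\g|e^{i-1}_u||e^i_u|$ that must be swallowed by the over-correction $(M-M_0)\t^\g$ built into $L^i_n$ without spoiling the explicit contraction constant $2M\t^\g/(\phi_\Mini + M\t^\g)$, and this is exactly why the hypotheses impose $M > M_0 = \|\Phib'\|_{\rm Lip}\Lambda$.
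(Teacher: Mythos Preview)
Your approach is essentially the paper's, with one cosmetic difference and one underspecified step.

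The cosmetic difference is that you Taylor-expand $\Phib(u^{i-1}_n)-\Phib(u_n)=\Phib'(u^{i-1}_n)e^{i-1}_u-R^i$, whereas the paper uses the mean-value form $\Phib'(\zeta)e^{i-1}_u$ and packages the key estimate as $0\le (M-M_0)\t^\g \le L^i_n-\Phib'(\zeta)\le 2M\t^\g$ (their Lemma~4.6). Your two pieces recombine exactly to $(L^i_n-\Phib'(\zeta))e^{i-1}_u$, so the identities coincide. But beware: if you bound the two pieces \emph{separately} by the triangle inequality you get the prefactor $(2M+\tfrac12 M_0)\t^\g$, not $2M\t^\g$, and the stated contraction rate $\a=2M\t^\g/(\phi_\Mini+M\t^\g)$ is lost. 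To hit the exact constant you must keep the sum together and bound it via the mean-value point, which is what the paper does; your Taylor split then becomes an unnecessary detour.

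The underspecified step is the degenerate case. Your ``monotone decrease of a suitable energy'' cannot come from the Young-inequality route you use for $\phi_\Mini>0$: with $L^i_n\ge 2M\t^\g$ on the left and $\le 2M\t^\g$ on the right, Young never produces a strict gap, and you get no summable term. The paper switches here to the L-scheme device: the algebraic identity $(a-b)a=\tfrac12[a^2-b^2+(a-b)^2]$ applied to $(L^i_n-\Phib'(\zeta))(e^i_u-e^{i-1}_u)e^i_u$, together with the \emph{lower} bound $L^i_n-\Phib'(\zeta)\ge (M-M_0)\t^\g>0$, yields
\[
M\t^\g\!\int_\Om h_{n-1}|e^i_u|^2+\tfrac{(M-M_0)\t^\g}{2}\!\int_\Om h_{n-1}|e^i_u-e^{i-1}_u|^2+\t\|\del e^i_w\|^2\le M\t^\g\!\int_\Om h_{n-1}|e^{i-1}_u|^2,
\]
which telescopes and forces $\|\del e^i_w\|\to 0$ and $\|e^i_u-e^{i-1}_u\|\to 0$; then \Cref{lemma:imprtnt_conv} finishes. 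Your remainder bound $|R^i|\le\tfrac12 M_0\t^\g|e^{i-1}_u|$ is precisely equivalent to this lower bound, so you have the ingredients, but the argument must proceed through the identity rather than Young.
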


For the convergence of $(u^i_n,w^i_n)$ in  the $L^\infty$-norm, see \Cref{lem: Linf conv Mscheme}.

\begin{remark}[Boundedness condition \eqref{eq: M-scheme assumption} and contraction in $L^\infty$]
   In the non-degenerate case, i.e. if $\phi_\Mini>0$,  the boundedness condition \eqref{eq: M-scheme assumption} follows from \ref{ass: extra} for time-step sizes $\t\leq (\phi_\Mini/3M)^{\frac{1}{\g}}$, as stated in \Cref{lem: Linf conv Mscheme}. In fact,  \Cref{lem: Linf conv Mscheme} even provides linear convergence of $u^i_n$ to $u_n$ in $L^\infty(\Om)$ with a contraction rate that scales with $\t^\g $. For the case of singular diffusion, a proof of  \eqref{eq: M-scheme assumption} was given in \cite[Lemma 3.1]{MITRA20191722}. 
   We expect that the result also holds in our case. However. since it is not the main focus of this work, we state it as an assumption.
\end{remark}

\begin{remark}[Comparison L- and M-scheme]
   Note that the extra assumptions \ref{ass: extra} and \eqref{eq: M-scheme assumption} are not required for the L-scheme, and hence, the L-scheme is expected to be more robust than the M-scheme. However, this comes at the cost of being considerably slower than the Newton scheme.  On the other hand, the assumptions required for M-scheme are expected to hold for problems such as \eqref{eq: main}. Since the contraction rate for the M-scheme scales with $\t $, for practical purposes the M-scheme results in a more competitive iterative solver than the L-scheme.
\end{remark}

We first prove the existence and uniqueness results stated in \Cref{thm: conv L-scheme,thm: conv M-scheme}. Recall that $L^i_n=L$ is constant for the L-scheme and $\sup\Phib'$ is bounded due to assumption \ref{ass: 1} and the construction of $\Phib$ in \Cref{def: reg Phi}. The proof of the following lemma applies to both schemes.
\begin{lemma}[Existence-uniqueness] \label{lem: exist L}
For $\t< 1\slash f_M$, the system of equations \eqref{eq: linear weak} with 
$$L^i_n \coloneqq L > \sup\Phib'\qquad \text{or}\qquad 
L^i_n\coloneqq \max\{\Phib'(u^{i-1}_n)+ M\t^\g ,\, 2M \t^\g \}
$$ has a unique solution.
\end{lemma}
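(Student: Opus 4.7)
The plan is to eliminate $\tilde{u}^i_n$ from the system using the algebraic relation \eqref{eq: linear weak 2}, reducing \eqref{eq: linear weak} to a single linear elliptic equation for $w^i_n \in H^1_0(\Om)$, and then to apply Lax--Milgram.

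First, I would verify that $L^i_n$ is a measurable function bounded away from $0$ and from $\infty$ in both cases. For the L-scheme this is trivial since $L^i_n \equiv L$. For the M-scheme, measurability follows from the continuity of $\Phib'$ (which is Lipschitz on the relevant interval by \ref{ass: 1} and the construction in \Cref{def: reg Phi}) composed with the measurable $u^{i-1}_n$; the two-sided bound $2M\t^\g \leq L^i_n(x) \leq \sup\Phib' + M\t^\g$ is immediate from the definition. Consequently, $1/L^i_n \in L^\infty(\Om)$.

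Next, since \eqref{eq: linear weak 2} holds for every $\xi \in L^2(\Om)$, it is equivalent to the pointwise a.e.\ identity
\begin{equation*}
\tilde{u}^i_n = u^{i-1}_n + \frac{1}{L^i_n}\bigl(w^i_n - \Phib(u^{i-1}_n)\bigr),
\end{equation*}
so $\tilde{u}^i_n \in L^2(\Om)$ is uniquely determined by $w^i_n$ (using that $\Phib$ is Lipschitz and $u^{i-1}_n \in L^2(\Om)$). Substituting this expression into \eqref{eq: linear weak 1} and collecting terms yields a linear problem for $w^i_n$ alone: find $w^i_n \in H^1_0(\Om)$ such that
\begin{equation*}
\int_\Om \frac{h_{n-1}}{\t L^i_n}\, w^i_n\, \phi + \int_\Om \nabla w^i_n \cdot \nabla \phi = \ell(\phi) \qquad \forall \phi \in H^1_0(\Om),
\end{equation*}
where $\ell$ is a bounded linear functional on $H^1_0(\Om)$ assembled from the known data $u^{i-1}_n$, $u_{n-1}$, $f(v_{n-1})$, and $\Phib(u^{i-1}_n)$.

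Finally, I would apply Lax--Milgram. Boundedness of the bilinear form follows from the $L^\infty$-bound on $h_{n-1}/(\t L^i_n)$ and the Cauchy--Schwarz inequality. Coercivity is immediate for $\t < 1/f_\Maxi$: then $h_{n-1} \geq 0$ a.e., hence the zeroth-order term is non-negative and the gradient term alone gives coercivity on $H^1_0(\Om)$ via the Poincar\'e inequality \eqref{eq:Poincare}. Lax--Milgram delivers a unique $w^i_n \in H^1_0(\Om)$, which via the algebraic relation determines $\tilde{u}^i_n$ and hence $u^i_n = \posi{\tilde{u}^i_n}$ uniquely. No real obstacle arises here; the only care needed is confirming the two-sided bound on $L^i_n$ for the M-scheme so that the reduction to an elliptic problem is justified.
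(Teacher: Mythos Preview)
Your proposal is correct and follows essentially the same approach as the paper: eliminate $\tilde{u}^i_n$ via the algebraic relation \eqref{eq: linear weak 2}, reduce to a single linear elliptic problem for $w^i_n$, and apply Lax--Milgram using the two-sided bounds on $L^i_n$ together with the Poincar\'e inequality for coercivity.
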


\begin{proof}
We eliminate $\Tilde{u}^i_n$ in  \eqref{eq: linear weak 1} through \eqref{eq: linear weak 2} and find
\begin{equation}
    \left(\frac{h_{n-1}}{L^i_n}w_{n}^{i}, \phi\right) + \t (\nabla w_n^i, \nabla \phi) = (g_n^i, \phi) \quad \text{ for all $\phi \in H_{0}^{1}(\Om)$} \label{eq: combined weak form ML},
\end{equation}
where $g_n^i = \frac{h_{n-1}}{L^i_n}\Phib(u^{i-1}_n) - (h_{n-1}u^{i-1}_n - u_{n-1})$.
Consider the bilinear form $B(w,\phi) = ((h_{n-1}/L^i_n)\, w,\phi) + \t (\nabla w, \nabla \phi)$ and the linear functional $l(\phi) = (g_n^i,\phi)$. We observe that $L^i_n$ is constant, or bounded from above and below by positive constants in case of the M-scheme, see \eqref{eq:different_schemes}, and $0<h_{n-1}<1$ due to $\t < 1/f_\Maxi$. Hence, using the Cauchy-Schwarz and Poincar\'e inequality implies that $B$ is coercive and bounded and $l$ is a bounded linear functional on $H_0^1(\Om)$. The Lax-Milgram theorem now provides the existence and uniqueness of a solution $w^i_n\in \H$. The existence and uniqueness of $\Tilde{u}_n^i\in L^2(\Om)$ then follows from 
\eqref{eq: linear weak 2}, while $u^i_n$ can be found through \eqref{eq: linear weak 3}.
\end{proof}

\subsection{L-scheme}\label{sec: Lscheme}
 First, we show that the solutions of the L-scheme converge to the time-discrete solutions $u_n$ and $w_n$.

\begin{lemma}[Convergence of the L-scheme] \label{lem: conv L-scheme}
Under the assumptions of \Cref{thm: conv L-scheme}, the stated convergence results hold.

\end{lemma}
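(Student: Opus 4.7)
The plan is to build on \Cref{lem: exist L}, which already provides existence and uniqueness of the iterates $(u^i_n,w^i_n)$, and focus entirely on convergence. By \Cref{rem: un positive} the positive-part truncation in \eqref{eq: linear weak 3} only sharpens all norm estimates, so throughout I identify $u^i_n$ with $\tilde u^i_n$. Subtracting \eqref{eq: time-discrete weak 1} from \eqref{eq: linear weak 1}, and using $w_n=\Phib(u_n)$ in \eqref{eq: linear weak 2}, the errors $e^i_u,e^i_w$ defined in \eqref{eq:def_errors} satisfy
\[
\tfrac{1}{\t}\brac{h_{n-1} e^i_u, \phi} + \brac{\nabla e^i_w,\nabla\phi} = 0 \quad\forall \phi\in\H, \qquad L\, e^i_u = e^i_w - \bigl(\Phib(u^{i-1}_n)-\Phib(u_n)\bigr)\ \text{a.e.\ in }\Om.
\]
The mean value theorem gives $\Phib(u^{i-1}_n)-\Phib(u_n) = \theta_i\, e^{i-1}_u$ pointwise, where $L_\Phi:=\sup\Phib'<L$ and $\theta_i(x)\in[\phi_\Mini, L_\Phi]$. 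Monotonicity of $\Phib$ ensures $\theta_i\geq \phi_\Mini\geq 0$, while $L>\sup\Phib'$ gives $L-\theta_i>0$ a.e.

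The key algebraic step is to test the weak error equation with $\phi=e^i_w$ and eliminate $e^i_w$ via $e^i_w = L(e^i_u-e^{i-1}_u)+\theta_i e^{i-1}_u$, obtained by rearranging the algebraic error equation. Applying pointwise the quadratic identities $a(a-b)=\tfrac12[a^2-b^2+(a-b)^2]$ and $ab=\tfrac12[a^2+b^2-(a-b)^2]$ to decompose the mixed term, one arrives at the master equality
\[
\tfrac{1}{2\t}\!\int_\Om\! h_{n-1}\!\left[(L+\theta_i)(e^i_u)^2 + (L-\theta_i)(e^i_u-e^{i-1}_u)^2\right] + \norm{\nabla e^i_w}^2 = \tfrac{1}{2\t}\!\int_\Om\! h_{n-1}(L-\theta_i)(e^{i-1}_u)^2.
\]
In the non-degenerate case $\phi_\Mini>0$, using $L+\theta_i\geq L+\phi_\Mini$ on the LHS, $L-\theta_i\leq L$ on the RHS, dropping the non-negative difference term, and multiplying through by $\tfrac{2\t}{L+\phi_\Mini}$ immediately yields $\norm{(e^i_u,e^i_w)}_L^2\leq \tfrac{L}{L+\phi_\Mini}\norm{\sqrt{h_{n-1}}e^{i-1}_u}^2\leq \alpha^2\norm{(e^{i-1}_u,e^{i-1}_w)}^2_L$ with $\alpha^2=L/(L+\phi_\Mini)$.

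The main obstacle lies in the degenerate case $\phi_\Mini=0$, where this factor becomes $1$ and no contraction is available. Here I would instead use only the bounds $\theta_i\geq 0$ and $L-\theta_i\geq L-L_\Phi>0$ in the master equality to get
\[
\tfrac{L}{2\t}\norm{\sqrt{h_{n-1}}e^i_u}^2 + \tfrac{L-L_\Phi}{2\t}\norm{\sqrt{h_{n-1}}(e^i_u-e^{i-1}_u)}^2 + \norm{\nabla e^i_w}^2 \leq \tfrac{L}{2\t}\norm{\sqrt{h_{n-1}}e^{i-1}_u}^2.
\]
Thus $\{\norm{\sqrt{h_{n-1}}e^i_u}^2\}_i$ is non-increasing, and summing over $i$ produces $\sum_i\norm{\nabla e^i_w}^2<\infty$ and $\sum_i\norm{\sqrt{h_{n-1}}(e^i_u-e^{i-1}_u)}^2<\infty$. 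Consequently $e^i_w\to 0$ in $\H$, and since $h_{n-1}\geq 1-\t f_\Maxi>0$ uniformly, $e^i_u-e^{i-1}_u\to 0$ in $L^2(\Om)$. Feeding this into the algebraic error equation yields $\Phib(u^{i-1}_n)-\Phib(u_n)=e^i_w-L(e^i_u-e^{i-1}_u)\to 0$ in $L^2(\Om)\hookrightarrow L^1(\Om)$. To lift this convergence from the level of $\Phib$ back to $u^i_n$ itself, I would invoke \Cref{lemma:imprtnt_conv} with $\psi=\Phib$; its hypotheses --- strict monotonicity, local convexity near $0$, and positive infimum of $\psi'$ away from $0$ --- are inherited from $\Phi$ via \ref{ass: 1} and the construction in \Cref{def: reg Phi}. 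This gives $u^{i-1}_n\to u_n$ in $L^1(\Om)$, and the summability of $e^i_u-e^{i-1}_u$ then upgrades this to $u^i_n\to u_n$ in $L^1(\Om)$.
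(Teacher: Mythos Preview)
Your proof is correct and follows essentially the same route as the paper's: both derive the same master equality by testing the error equation with $e^i_w$ and exploiting the algebraic relation $e^i_w = L(e^i_u-e^{i-1}_u)+\theta_i e^{i-1}_u$ (the paper writes $\theta_i$ as the difference quotient $(\d\Phib/\d u)^{i-1}$), then split into the non-degenerate contraction case and the degenerate telescoping case, finishing with \Cref{lemma:imprtnt_conv}. The only cosmetic difference is that the paper first regroups the integrand as $(L-\theta_i)(e^i_u-e^{i-1}_u)e^i_u + \theta_i (e^i_u)^2$ before applying a single quadratic identity, whereas you apply two identities directly; the resulting inequalities are identical.
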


\begin{proof}
We use ideas from the proof of Lemma 2.6 in \cite{cances2021error}. Subtracting  \eqref{eq: time-discrete weak 1} from Equation \eqref{eq: linear weak 1} we find
\begin{equation}
    \brac{h_{n-1}e^i_u, \phi} + \t \brac{\nabla e^i_w, \nabla \phi} = 0,
\end{equation}
where $e^i_u = u^i_n - u_n$ and $e^i_w = w^i_n - w_n$, see \eqref{eq:def_errors}. By adding and subtracting $Lu_n$, and adding and subtracting $w_n=\Phib(u_n)$ on the right hand side of \eqref{eq: linear weak 2}, we can rewrite it as
\begin{equation}
    \brac{L \brac{e^i_u - e^{i-1}_u}, \xi} = \brac{e^i_w - \d \Phib^{i-1}, \xi}
\end{equation}
where $\d \Phib^{i-1} \coloneqq \Phib \brac{u^{i-1}_n} - \Phib(u_n)$. Choosing $\phi = e^i_w \in \H$ and $\xi = h_{n-1}e^i_u \in L^2(\Om)$, we combine the two equations and obtain
\begin{equation}
    \brac{h_{n-1}L \brac{e^i_u - e^{i-1}_u}, e^i_u} + \brac{h_{n-1}\d \Phib^{i-1}, e^i_u} + \t \norm{\nabla e^i_w}_{L^2(\Om)}^2 = 0.
\end{equation}
This is rewritten as
\begin{equation}\label{eq:proof_convL}
    \brac{ h_{n-1}\brac{L - \brac{\frac{\d \Phib}{\d u}}^{i-1}} \brac{e^{i}_u - e^{i-1}_u}, e^i_u } + \brac{h_{n-1}\brac{\frac{\d \Phib}{\d u}}^{i-1} e^{i}_u, e^i_u} + \t \norm{\nabla e^i_w}_{L^2(\Om)}^2 = 0,
\end{equation}
where
\begin{equation}\label{eq: def dphi}
    \brac{\frac{\d \Phib}{\d u}}^{i-1} \coloneqq \frac{\Phib \brac{u^{i-1}_n} - \Phib(u_n)}{u^{i-1}_n - u_n}=\frac{\d \Phib^{i-1}}{e^{i-1}_u}.
\end{equation}
Using the identity $(a-b)a = \frac{1}{2}\brac{a^2 - b^2 + \brac{a-b}^2}$ with $a = e^i_u$ and $b = e^{i-1}_u$ we rewrite the first term in \eqref{eq:proof_convL} and obtain
\begin{equation} \label{eq: L-scheme equality}
\begin{split}
    &\frac{1}{2} \int_{\Om} h_{n-1}\brac{L + \brac{\frac{\d \Phib}{\d u}}^{i-1}} |e^i_u|^2 + \frac{1}{2} \int_{\Om} h_{n-1}\brac{L - \brac{\frac{\d \Phib}{\d u}}^{i-1}} |e^i_u - e^{i-1}_u|^2 \\
    &+ \t \norm{\nabla e^i_w}_{L^2(\Om)}^2 = \frac{1}{2} \int_{\Om} h_{n-1}\brac{L - \brac{\frac{\d \Phib}{\d u}}^{i-1}} |e^{i-1}_u|^2.
\end{split}
\end{equation}
Note that $L > \sup\Phib'$ and assumption \ref{ass: 1} imply that
\begin{equation}
    0 \leq \phi_\Mini \leq \brac{\frac{\d \Phib}{\d u}}^{i-1} < L.
\end{equation}
Combining this with equation \eqref{eq: L-scheme equality} we find that
\begin{equation} \label{eq: L-scheme inequality}
    \frac{L+\phi_\Mini}{2} \int_\Om h_{n-1}|e^i_u|^2 + \frac{\varepsilon}{2}\int_{\Om}h_{n-1}|e^i_u - e^{i-1}_u|^2 + \t \norm{\nabla e^i_w}^2_{L^2(\Om)} \leq \frac{L}{2} \int_{\Om}h_{n-1}|e^{i-1}_u|^2,
\end{equation}
where  
\begin{equation}\label{def: epsilon}
    \varepsilon \coloneqq   L-\sup \Phib' \leq  \brac{L - \brac{\frac{\d \Phib}{\d u}}^{i-1}} .
\end{equation}
Note that the norm $\sqrt{\int_\Om h_{n-1}|e^i_u|^2}$ is equivalent to $\norm{e^i_u}_{L^2(\Om)}$, since $0<h_{n-1} <1$ for $\t < 1 \slash \norm{f}_{L^\infty}$. 

In the non-degenerate case, i.e. if  $\phi_\Mini > 0$, we obtain a contraction as the second term in \eqref{eq: L-scheme inequality} is positive, $\norm{\brac{e^i_u, e^i_w}}_{L} \leq \sqrt{\frac{L}{L + \phi_\Mini}} \norm{\brac{e^{i-1}_u, e^{i-1}_w}}_{L},$ with the norm  defined in \eqref{eq:norm_Lscheme}.
Consequently, $u^i_n \to u_n$ in $L^2(\Om)$ and $w^i_n \to w_n$ in $\H$ by  Banach's fixed-point theorem.

In the degenerate case, we can sum up both sides of Equation \eqref{eq: L-scheme inequality} to find
\begin{equation}
\begin{split}
        0 &\leq \frac{\varepsilon}{2}\sum_{i=1}^N \brac{\int_{\Om}h_{n-1}|e^i_u - e^{i-1}_u|^2} + \t \sum_{i=1}^N \norm{\nabla e^i_w}_{L^2(\Om)}^2 \\&\leq \frac{L}{2} \int_{\Om}h_{n-1}|e^0_u|^2 - \frac{L}{2} \int_{\Om}h_{n-1}|e^N_u|^2 < \infty.
\end{split}
\end{equation}
\noindent Hence, taking the limit $N \to \infty$ we conclude that of both sums must go to 0, yielding
\begin{subequations}
\begin{align}
    &\norm{\nabla e^i_w}_{L^{2}(\Om)} \to 0, \label{eq: L-scheme w conv}\\
    &\norm{e^{i}_u - e^{i-1}_u}_{L^2(\Om)} \label{eq: L-scheme eiu diff} \to 0.
\end{align}
\end{subequations}
\noindent From \eqref{eq: L-scheme w conv} it follows that  $w^i_n \to w_n$ in $\H$. Moreover, we  can rewrite \eqref{eq: L-scheme eiu diff} and use the strong form of \eqref{eq: linear weak 2} to find
\begin{equation}
    \norm{\frac{1}{L}\brac{w^i_n - \Phib(u^{i-1}_n)}}_{L^2(\Om)} = \norm{u^i_n - u^{i-1}_n}_{L^2(\Om)} = \norm{e^i_u - e^{i-1}_u}_{L^2(\Om)} \to 0.
\end{equation}
Hence, $\Phib(u^i_n) \to w_n = \Phib(u_n)$ in $L^2(\Om)$, as $L > 0$. Finally, to prove that $u^i_n \to u_n$ in $L^1(\Om)$ we use \Cref{lemma:imprtnt_conv} replacing the function $\psi$ with $\Phib$.

The result for the positive part of $u^i_n$ follows from \Cref{rem: un positive}, which completes the proof.
\end{proof}

\begin{proof}[Proof of \Cref{thm: conv L-scheme}]
    The existence and uniqueness of solutions $\{(u^i_n, w^i_n)\}_{i\in\N}\subset L^2(\Om)\times \H$  follows from \Cref{lem: exist L} and the convergence  from \Cref{lem: conv L-scheme}.
\end{proof} 

We have shown that the L-scheme converges and that the error is a strict contraction in the non-degenerate case. Unfortunately, the contraction rate is very close to 1 if $\phi_\Mini>0$ is small compared to $L$, as reported in \cite{PS2,MITRA20191722,Stokke2023}. 
A closer inspection indicates that setting $L^i_n>\sup \Phib'$ everywhere in the domain is superfluous and that this is the main reason for the slow convergence rate. 
To overcome this drawback 
we aim to modify the L-scheme such that it is stable but converges fast. This leads us to the M-scheme, first introduced in \cite{MITRA20191722}.

\subsection{M-scheme}\label{sec: Mscheme}
Note that in contrast to the L-scheme, now $L^i_n$ is a function of the previous iterate $u^{i-1}_n$.
We first derive two useful estimates that are needed to prove the main convergence results.
\begin{lemma}[Some useful inequalities]\label{lemma: inequalities}
Let \ref{ass: 1} and \eqref{eq: M-scheme assumption} hold and $M \geq M_{0}=\|\Phib^{\prime}\|_{\rm Lip}\Lambda$. 
Then, the following inequalities hold:
\begin{subequations}
\begin{align}
    &L^i_n \geq 2M\t^\g  \label{ineq: 1}, \\
&0 \leq (M-M_0)\t^\g \leq L^i_n - \brac{\frac{\d \Phib}{\d u}}^{i-1} \leq 2M\t^\g  \label{ineq: 2},
\end{align}
\end{subequations}
where $\brac{\frac{\d \Phib}{\d u}}^{i-1}$ was defined in \eqref{eq: def dphi}.
\end{lemma}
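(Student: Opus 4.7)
The plan is to establish \eqref{ineq: 1} directly from the definition of $L^i_n$, and then to derive \eqref{ineq: 2} by combining the mean value theorem with the Lipschitz continuity of $\Phib'$ and the boundedness hypothesis \eqref{eq: M-scheme assumption}, followed by a short case analysis.

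First, \eqref{ineq: 1} is an immediate consequence of the definition $L^i_n = \max\{\Phib'(u^{i-1}_n) + M\t^\g,\, 2M\t^\g\}$, since the maximum dominates each of its arguments, and in particular dominates $2M\t^\g$.

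Next, for \eqref{ineq: 2}, observe that since $\Phib$ is everywhere differentiable (it is $C^1$ on $[0,\Breve{u}]$ by \ref{ass: 1} and was extended affinely beyond $\Breve{u}$ in \Cref{def: reg Phi}), the mean value theorem provides a point $\xi\in I(u^{i-1}_n,u_n)$ such that
\begin{equation*}
    \brac{\frac{\d \Phib}{\d u}}^{i-1} = \frac{\Phib(u^{i-1}_n) - \Phib(u_n)}{u^{i-1}_n - u_n} = \Phib'(\xi).
\end{equation*}
By construction, $\Phib'$ is Lipschitz on $[0,\infty)$ with constant $\|\Phib'\|_{\rm Lip}$ (Lipschitz on $[0,\Breve{u}]$ by \ref{ass: 1} and constant beyond). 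Combining this with the boundedness assumption \eqref{eq: M-scheme assumption} yields $|\xi - u^{i-1}_n| \leq |u_n - u^{i-1}_n| \leq \Lambda \t^\g$, so that
\begin{equation*}
    \left| \brac{\frac{\d \Phib}{\d u}}^{i-1} - \Phib'(u^{i-1}_n)\right| \leq \|\Phib'\|_{\rm Lip}\Lambda \t^\g = M_0 \t^\g.
\end{equation*}

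With this pointwise control in hand, \eqref{ineq: 2} follows by treating the two cases in the definition of $L^i_n$ separately. If $L^i_n = \Phib'(u^{i-1}_n) + M\t^\g$, then subtracting $\Phib'(\xi)$ gives the bounds $(M - M_0)\t^\g \leq L^i_n - \Phib'(\xi) \leq (M+M_0)\t^\g \leq 2M\t^\g$, where the last step uses $M \geq M_0$. If instead $L^i_n = 2M\t^\g$, the defining inequality forces $\Phib'(u^{i-1}_n) \leq M\t^\g$; together with monotonicity of $\Phib$ (so $\Phib'(\xi)\geq 0$) and the Lipschitz estimate above we obtain $0 \leq \Phib'(\xi) \leq (M+M_0)\t^\g$, hence $(M-M_0)\t^\g \leq L^i_n - \Phib'(\xi) \leq 2M\t^\g$. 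In either case \eqref{ineq: 2} holds, and no step is really delicate; the only point requiring care is verifying $M\geq M_0$ is used only to ensure non-negativity of $(M-M_0)\t^\g$ and to collapse $(M+M_0)\t^\g$ into $2M\t^\g$.
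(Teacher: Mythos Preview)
Your proof is correct and follows essentially the same approach as the paper's: both arguments obtain \eqref{ineq: 1} directly from the definition of $L^i_n$, invoke the mean value theorem to write the difference quotient as $\Phib'(\xi)$, use the Lipschitz estimate $|\Phib'(u^{i-1}_n)-\Phib'(\xi)|\leq M_0\t^\g$ from \eqref{eq: M-scheme assumption}, and then carry out the same two-case analysis on the maximum defining $L^i_n$. The only cosmetic difference is that you make the use of $\Phib'\geq 0$ explicit for the upper bound in the second case, whereas the paper phrases this as $\Phib'(\xi)\geq \phi_\Mini$.
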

\begin{proof}
Note that \eqref{ineq: 1} is an immediate consequence of the definition of $L^i_n$. 
To prove \eqref{ineq: 2}, we first note that 
\begin{equation}\label{eq: ineq dphib}
    \brac{\frac{\d \Phib}{\d u}}^{i-1} = \Phib^{\prime}(\zeta),
\end{equation}
for some $\zeta \in I(u^{i-1}_n, u_n)$ by the mean-value theorem. Moreover, for any $\zeta \in I(u^{i-1}_n, u_n)$, we have
\begin{equation} \label{ineq: proof}
        |\Phib^{\prime}(u^{i-1}_n) - \Phib^{\prime}(\zeta)| \leq \|\Phib^{\prime}\|_{\rm Lip}\|u^{i-1}_n - \zeta\|_{L^{\infty}(\Om)}\leq \|\Phib^{\prime}\|_{\rm Lip} \Lambda \t  = M_{0}\t^\g ,
\end{equation}
where we used \eqref{eq: M-scheme assumption} in the last inequality. 
If $L^i_n = \Phib^{\prime}(u^{i-1}_n) + M\t^\g $, then $L^i_n - \Phib^{\prime}(\zeta) \geq (M-M_0)\t^\g  \geq 0$. On the other hand, if $L^i_n = 2M\t^\g $, then $\Phib^{\prime}(u^{i-1}_n) \leq M\t^\g $ by the definition of $L^i_n$. Together with \eqref{ineq: proof} we conclude that $\Phib^{\prime}(\zeta) \leq \Phib^{\prime}(u^{i-1}_n) + M_0 \t^\g  \leq (M+M_0)\t^\g $, and thus again $L^i_n - \Phib^{\prime}(\zeta) \geq (M-M_0)\t^\g \geq 0$. 

To derive the upper bound we argue analogously. If $L^i_n = \Phib^{\prime}(u^{i-1}_n) + M\t^\g $, we find
\begin{equation}
    L^i_n - \Phib^{\prime}(\zeta) \leq \|\Phib^{\prime}\|_{\rm Lip}(u^{i-1}_n - \zeta) + M\t^\g  \leq  \|\Phib^{\prime}\|_{\rm Lip} \Lambda\t^\g  + M\t^\g  \leq 2M\t^\g .
\end{equation}
If $L^i_n = 2M\t^\g $, then we have $L^i_n - \Phib^{\prime}(\zeta) \leq L^i_n - \phi_\Mini \leq 2M\t^\g $. Hence, combining all estimates we find
\begin{equation}
    0 \leq (M-M_0)\t^\g  \leq L^i_n - \Phib^{\prime}(\zeta) \leq 2M\t^\g.
\end{equation}
As the estimates hold for any $\zeta \in I(u^{i-1}_n, u_n)$, the statement follows from \eqref{eq: ineq dphib}.

\end{proof}

Next, we prove the first convergence result in $L^\infty$ for non-degenerate problems. 

\begin{proposition}[$L^{\infty}$ convergence of $u^i_n$] \label{lem: Linf conv Mscheme}
Assume that $\inf \Phi'=\phi_{\Mini} > 0$ and \ref{ass: extra} holds. Then, for $M>M_{0}\coloneqq \|\Phib^{\prime}\|_{\rm Lip} \Lambda$, one has
\begin{equation}
    \|w^i_n-w_n\|_{L^\infty(\Om)}\leq 2M \t^\g  \|u^{i-1}_n-u_n\|_{L^\infty(\Om)}.\label{ineq:  theorem 1}
\end{equation}
 Moreover, if $\t < (\phi_\Mini/M)^{\frac{1}{\g}}$, then
\begin{equation}
\|u^i_n-u_n\|_{L^\infty(\Om)}\leq \frac{4M \t^\g }{\phi_\Mini+M\t^\g } \|u^{i-1}_n-u_n\|_{L^\infty(\Om)}. \label{ineq:  theorem 2}
\end{equation}  
Therefore, if $\t < (\phi_\Mini/(3M))^{\frac{1}{\g}}$, then $u^i_n$ converges linearly in $L^\infty(\Om)$ to $u_n$ and  the uniform boundedness of the iterates $\|u^{i}_n-u_n\|_{L^\infty(\Om)} \leq \Lambda \t^\g $ in \eqref{eq: M-scheme assumption} holds.
\end{proposition}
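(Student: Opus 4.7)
The plan is to derive an elliptic PDE with homogeneous Dirichlet data for the iteration error $e^i_w$, apply the weak maximum principle to bound $\|e^i_w\|_{L^\infty(\Om)}$ in terms of $\|e^{i-1}_u\|_{L^\infty(\Om)}$, and then use the algebraic identity \eqref{eq: linear weak 2} to translate this into a bound on $\|e^i_u\|_{L^\infty(\Om)}$. Both \eqref{ineq:  theorem 1} and \eqref{ineq:  theorem 2} are obtained under the standing hypothesis of \Cref{lemma: inequalities}, namely that \eqref{eq: M-scheme assumption} holds at step $i-1$; the final statement then propagates this bound by induction on $i$, exploiting that for $\t<(\phi_\Mini/(3M))^{1/\g}$ the contraction factor is strictly less than $1$.

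First, subtracting \eqref{eq: time-discrete weak 1} from \eqref{eq: linear weak 1} and writing the result in strong form yields $-\t\Delta e^i_w + h_{n-1} e^i_u = 0$ in $\Om$ with $e^i_w=0$ on $\p\Om$. Rearranging \eqref{eq: linear weak 2} and using the definition \eqref{eq: def dphi} gives the pointwise algebraic identity
\begin{equation*}
L^i_n\, e^i_u \;=\; \Bigl[L^i_n - \brac{\tfrac{\d\Phib}{\d u}}^{i-1}\Bigr] e^{i-1}_u + e^i_w.
\end{equation*}
Eliminating $e^i_u$ from the elliptic equation produces
\begin{equation*}
-\t\Delta e^i_w + \frac{h_{n-1}}{L^i_n}\, e^i_w \;=\; -\frac{h_{n-1}}{L^i_n}\Bigl[L^i_n - \brac{\tfrac{\d\Phib}{\d u}}^{i-1}\Bigr] e^{i-1}_u,
\end{equation*}
an elliptic problem with non-negative zero-order coefficient. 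Setting $K := \bigl\|[L^i_n - \brac{\tfrac{\d\Phib}{\d u}}^{i-1}]\, e^{i-1}_u\bigr\|_{L^\infty(\Om)}$ and testing with $[e^i_w - K]_+$ and $-[e^i_w + K]_-$ in the standard Stampacchia way gives $\|e^i_w\|_{L^\infty(\Om)} \leq K$, since the right-hand side is pointwise bounded in magnitude by $(h_{n-1}/L^i_n)\,K$. Combining this with the pointwise bound $|L^i_n - \brac{\tfrac{\d\Phib}{\d u}}^{i-1}| \leq 2M\t^\g$ from \Cref{lemma: inequalities} yields \eqref{ineq:  theorem 1}.

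For \eqref{ineq:  theorem 2}, I solve the algebraic identity for $e^i_u$ to obtain the pointwise estimate
\begin{equation*}
|e^i_u(x)| \;\leq\; \frac{|L^i_n - \brac{\tfrac{\d\Phib}{\d u}}^{i-1}|(x)}{L^i_n(x)}\, \|e^{i-1}_u\|_{L^\infty(\Om)} \;+\; \frac{\|e^i_w\|_{L^\infty(\Om)}}{L^i_n(x)}.
\end{equation*}
When $\t < (\phi_\Mini/M)^{1/\g}$ one has $\Phib'(u^{i-1}_n) \geq \phi_\Mini > M\t^\g$, so the maximum defining $L^i_n$ in \eqref{eq:different_schemesM} is always attained by its first argument, giving $L^i_n \geq \phi_\Mini + M\t^\g$ pointwise. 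Inserting this lower bound together with \Cref{lemma: inequalities} and \eqref{ineq:  theorem 1} into the display above delivers \eqref{ineq:  theorem 2}.

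Finally, $\t < (\phi_\Mini/(3M))^{1/\g}$ is equivalent to $4M\t^\g < \phi_\Mini + M\t^\g$, so the contraction factor in \eqref{ineq:  theorem 2} is strictly less than $1$. The uniform bound \eqref{eq: M-scheme assumption} and linear convergence in $L^\infty(\Om)$ then follow by induction on $i$: the base case $\|u^0_n - u_n\|_{L^\infty(\Om)} = \|u_{n-1} - u_n\|_{L^\infty(\Om)} \leq \Lambda\t^\g$ is exactly \ref{ass: extra}, and given the inductive hypothesis one applies \Cref{lemma: inequalities} followed by \eqref{ineq:  theorem 2} to conclude $\|u^i_n - u_n\|_{L^\infty(\Om)} \leq \alpha\,\Lambda\t^\g \leq \Lambda\t^\g$. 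The main subtlety is precisely the apparent circularity between \Cref{lemma: inequalities} — which requires \eqref{eq: M-scheme assumption} to yield the contraction — and \eqref{eq: M-scheme assumption} itself, resolved by closing the induction once the contraction factor is known to lie below~$1$.
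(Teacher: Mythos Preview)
Your proposal is correct and follows essentially the same approach as the paper: both derive the error equations \eqref{eq: weak diff}, use the algebraic identity from \eqref{eq: linear weak 2} to express $e^i_u$ in terms of $e^i_w$ and $e^{i-1}_u$, test against $[e^i_w \mp a]_\pm$ to obtain the $L^\infty$ bound on $e^i_w$ via \Cref{lemma: inequalities}, and then feed this back through the algebraic identity to bound $e^i_u$, closing the induction once the contraction factor drops below~$1$. The only cosmetic difference is that you first eliminate $e^i_u$ to display a single scalar elliptic equation for $e^i_w$ before truncating, whereas the paper keeps the two error equations separate and substitutes after testing; the resulting computations are identical.
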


\begin{proof}
We prove the statement by induction in $i\in \N$. For $i = 1$ it is satisfied by assumption \ref{ass: extra} and \Cref{rem: un positive}. \Cref{rem: un positive} will also be used in the following estimates, i.e. taking the positive part of $u_n$ does not alter the inequalities.
For the induction step we assume $\|u^{i-1}_n-u_n\|_{L^\infty(\Om)} \leq \Lambda \t^\g $, which allows us to use \Cref{lemma: inequalities}. We split the proof into two parts. First, we show that  \eqref{ineq: theorem 1} holds and subsequently, we deduce from it \eqref{ineq:  theorem 2}. 

\textbf{(Step 1:)} Note that \eqref{ineq:  theorem 1} is equivalent to showing that $[e^{i}_w -a]_+ = 0$ and $[e^{i}_w + a]_- = 0$ for a specific $a>0$. 
We subtract \eqref{eq: time-discrete weak 1} from Equation \eqref{eq: linear weak 1} and rewrite \eqref{eq: linear weak 2} as in the proof of \Cref{lem: conv L-scheme}, which yields
\begin{subequations} \label{eq: weak diff}
    \begin{align}
        (h_{n-1}e^i_u, \phi) + \t(\nabla e^i_w, \nabla \phi) &= 0, \label{eq: weak diff 1}\\
        \brac{L^i_n \brac{e^i_u - e^{i-1}_u}, \xi} &= \brac{e^i_w - \d \Phib^{i-1}, \xi} \label{eq: weak diff 2}.
    \end{align}
\end{subequations}
 Choosing $\phi = [e^{i}_{w} -a]_{+} \in \H$ in \eqref{eq: weak diff 1} yields
\begin{equation}
    (h_{n-1}e^i_u, [e^{i}_{w} -a]_{+}) + \t(\nabla e^i_w, \nabla [e^{i}_{w} -a]_{+}) = 0,
\end{equation}
and since the second term is positive, we find that
\begin{equation} \label{eq: linf proof leq 0}
    (h_{n-1}e^i_u, [e^{i}_{w} -a]_{+}) \leq 0.
\end{equation}
To eliminate $e^i_u$ we observe that equation \eqref{eq: weak diff 2} implies that
\begin{equation}\label{eq: pf bound}
    e^i_u = \frac{L^i_n - \brac{\frac{\d \Phib}{\d u}}^{i-1}}{L^i_n}e^{i-1}_{u} + \frac{e^i_w}{L^i_n},
\end{equation}
almost everywhere. Combining \eqref{eq: linf proof leq 0} and \eqref{eq: pf bound} yields
\begin{equation}
    \int_{\Om}\frac{h_{n-1}}{L^{i}_n}[e^i_w-a][e^i_w-a]_{+} + \int_{\Om}\frac{h_{n-1}}{L^i_n}\left(a+\brac{L^i_n - \brac{\frac{\d \Phib}{\d u}}^{i-1}}e^{i-1}_{u} \right)[e^i_w - a]_{+} \leq 0.
\end{equation}
The first term is positive and the second term can be made positive by choosing
\begin{equation}
a = 2M\t^\g  \infnorm{u^{i-1}_n - u_n},
\end{equation}
as $0\leq L^i_n - \brac{\frac{\d \Phib}{\d u}}^{i-1} \leq 2M\t^\g$ by \Cref{lemma: inequalities}. Hence, we find that $[e^{i}_w - a]_{+} = 0$. The proof for $\phi = [e^i_w + a]_{-} \in \H$ is analogous which proves \eqref{ineq:  theorem 1}.

\textbf{(Step 2:)} To show \eqref{ineq:  theorem 2} we 
again note that $L^i_n - \brac{\frac{\d \Phib}{\d u}}^{i-1} \leq 2M\t^\g $ by \Cref{lemma: inequalities} and that in the non-degenerate case, we have
\begin{equation}
    \frac{1}{L^i_n} \leq \min\left\{\frac{1}{2M\t^\g }, \frac{1}{\phi_{\Mini}+M\t^\g }\right\}.
\end{equation}
Hence, using \eqref{ineq:  theorem 1} in 
Equation \eqref{eq: pf bound} implies that 
\begin{equation}
    \begin{split}
        \|e^i_u\|_{L^\infty(\Om)} &\leq \min\left\{\frac{1}{2M\t^\g }, \frac{1}{\phi_\Mini+M\t^\g }\right\} (2M\t^\g  \|e^{i-1}_{u}\|_{L^\infty(\Om)} + 2M\t^\g  \|e^{i-1}_{u}\|_{L^\infty(\Om)}) \\
        &= \min\left\{2, \frac{4M\t^\g }{\phi_\Mini+M\t^\g }\right\}\|e^{i-1}_{u}\|_{L^\infty(\Om)}.
    \end{split}
\end{equation}
Consequently, if  $\t  < (\frac{\phi_\Mini}{M})^{\frac{1}{\g}}$,  we get \eqref{ineq:  theorem 2}, and

Finally, note that the linear convergence and uniform $L^\infty$-bound of the iterates \eqref{eq: M-scheme assumption} follows if $\t  < (\frac{\phi_\Mini}{3M})^{\frac{1}{\g}}$. Indeed, with the contraction rate $\bar{\a}={4M\t^\g }/({\phi_\Mini+M\t^\g })<1$ one has
\[\|e^i_u\|_{L^\infty(\Om)}< \bar{\a}\|e^{i-1}_u\|_{L^\infty(\Om)}\leq \dots \leq \bar{\a}^i\|e^0_u\|_{L^\infty(\Om)}=\bar{\a}^i\|u_{n-1}-u_n\|_{L^\infty(\Om)}\leq \Lambda \t^\g ,\]
the last inequality resulting from \ref{ass: extra}.
\end{proof}

Finally, we prove the convergence result for the M-scheme similar to \Cref{lem: conv L-scheme} for the L-scheme. The proof is analogous, but we obtain a better contraction rate in the non-degenerate case as the time step $\t$ is made smaller.

\begin{lemma}[Convergence of the M-scheme]\label{lem: conv M-scheme}
Under the assumptions of \Cref{thm: conv M-scheme} with $M>M_0:=\|\Phib'\|_{\rm Lip}\Lambda$, the   stated convergence results hold.
\end{lemma}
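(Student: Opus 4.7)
The plan is to mirror the proof of the L--scheme (\Cref{lem: conv L-scheme}), but now keep careful track of the variable factor $L^i_n$ and exploit the two-sided bound on $L^i_n - (d\Phib/du)^{i-1}$ provided by \Cref{lemma: inequalities}. First I would subtract \eqref{eq: time-discrete weak 1} from \eqref{eq: linear weak 1} and, using $w_n=\Phib(u_n)$, rewrite \eqref{eq: linear weak 2} in the error variables $(e^i_u,e^i_w)$ defined in \eqref{eq:def_errors}; this yields exactly the system \eqref{eq: weak diff} with $L$ replaced by $L^i_n$. Testing the first equation with $\phi=e^i_w\in H^1_0(\Om)$ and the second with $\xi=h_{n-1}e^i_u\in L^2(\Om)$ and summing produces
\begin{equation*}
    \Bigl(h_{n-1}L^i_n(e^i_u-e^{i-1}_u),e^i_u\Bigr) + \Bigl(h_{n-1}\delta\Phib^{i-1},e^i_u\Bigr) + \t\,\|\nabla e^i_w\|^2 = 0,
\end{equation*}
and introducing $(d\Phib/du)^{i-1}=\delta\Phib^{i-1}/e^{i-1}_u$ together with the identity $(a-b)a=\tfrac12(a^2-b^2+(a-b)^2)$ gives the analogue of \eqref{eq: L-scheme equality} with $L^i_n$ in place of $L$.

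Next I would invoke \Cref{lemma: inequalities}: the coefficient $L^i_n-(d\Phib/du)^{i-1}$ is non-negative (in fact $\geq (M-M_0)\t^\g>0$) and bounded above by $2M\t^\g$, while $L^i_n\geq 2M\t^\g$. In the non-degenerate regime $(d\Phib/du)^{i-1}\geq\phi_\Mini>0$, so dropping the positive middle term and using the lower bound $L^i_n+(d\Phib/du)^{i-1}\geq\phi_\Mini+M\t^\g$ on the left and the upper bound $L^i_n-(d\Phib/du)^{i-1}\leq 2M\t^\g$ on the right yields
\begin{equation*}
    \frac{\phi_\Mini+M\t^\g}{2}\int_\Om h_{n-1}|e^i_u|^2 + \t\|\nabla e^i_w\|^2 \leq M\t^\g\int_\Om h_{n-1}|e^{i-1}_u|^2,
\end{equation*}
which after multiplying by $2/(\phi_\Mini+M\t^\g)$ gives the claimed contraction in the $M$--norm \eqref{eq:cond_convM}. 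The restriction $\t<(\phi_\Mini/M)^{1/\g}$ is what makes the resulting factor strictly less than $1$, matching the hypothesis stated in the theorem.

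For the degenerate case ($\phi_\Mini=0$) I would proceed as in the L--scheme: since $L^i_n-(d\Phib/du)^{i-1}\geq (M-M_0)\t^\g>0$, telescoping the identity in $i$ gives a summable series, from which one extracts $\|\nabla e^i_w\|_{L^2(\Om)}\to 0$ and $\|L^i_n(e^i_u-e^{i-1}_u)\|_{L^2(\Om)}\to 0$. Because $L^i_n\geq 2M\t^\g$ is uniformly bounded below, this yields $\|e^i_u-e^{i-1}_u\|_{L^2(\Om)}\to 0$, and then \eqref{eq: linear weak 2} forces $\Phib(u^{i-1}_n)\to w_n=\Phib(u_n)$ in $L^2(\Om)$. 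Applying \Cref{lemma:imprtnt_conv} with $\psi=\Phib$ (which is strictly increasing and has the required convexity near $0$ inherited from $\Phi$) concludes $u^i_n\to u_n$ in $L^1(\Om)$, while the positivity step is dealt with via \Cref{rem: un positive}.

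The main obstacle I expect is bookkeeping the sign conditions needed in the degenerate case: one must ensure that $L^i_n-(d\Phib/du)^{i-1}$ stays strictly positive (so that the middle telescoping term is usable, and so that $L^i_n>0$ is bounded away from zero to invert the algebraic relation). This is where the hypothesis $M>M_0=\|\Phib'\|_{\rm Lip}\Lambda$ combined with the boundedness assumption \eqref{eq: M-scheme assumption} is essential --- without the uniform $L^\infty$ control of the iterates, \Cref{lemma: inequalities} would not apply and the lower bound $(M-M_0)\t^\g$ would be lost.
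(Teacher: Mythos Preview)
Your approach is correct for the degenerate case and essentially matches the paper there: the same error equations, the same identity $(a-b)a=\tfrac12(a^2-b^2+(a-b)^2)$, the same telescoping using the strict lower bound $L^i_n-(d\Phib/du)^{i-1}\geq (M-M_0)\t^\g>0$, and the same passage through \Cref{lemma:imprtnt_conv}.

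In the non-degenerate case, however, your route does not recover the contraction rate $\alpha=\dfrac{2M\t^\g}{\phi_\Mini+M\t^\g}$ stated in \Cref{thm: conv M-scheme}. After multiplying your inequality by $2/(\phi_\Mini+M\t^\g)$ you obtain
\[
\|(e^i_u,e^i_w)\|_M^2 \leq \frac{2M\t^\g}{\phi_\Mini+M\t^\g}\,\|(e^{i-1}_u,e^{i-1}_w)\|_M^2,
\]
i.e.\ a contraction with rate $\sqrt{2M\t^\g/(\phi_\Mini+M\t^\g)}$, which is strictly larger than the claimed $\alpha$ whenever $\t<(\phi_\Mini/M)^{1/\g}$. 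The paper obtains the sharper rate by \emph{not} using the $(a-b)a$ identity in this regime: instead it keeps the equation in the form
\[
(h_{n-1}L^i_n e^i_u,e^i_u)+\t\|\nabla e^i_w\|^2=(h_{n-1}(L^i_n-\Phib'(\zeta))e^{i-1}_u,e^i_u),
\]
applies Young's inequality with a free parameter $\rho>0$ to the right-hand side, uses $L^i_n\geq \phi_\Mini+M\t^\g$ on the left, and then optimises over $\rho$ (the minimiser is $\rho^*=\tfrac12(1+\phi_\Mini/(M\t^\g))$). This optimisation is what removes the square root. The paper in fact has a remark immediately after the proof pointing out that the L-scheme style argument you describe yields only the square-root rate. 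So your plan proves convergence and a contraction, but to match the stated $\alpha$ you need the Young-with-$\rho$ step in place of the algebraic identity in the non-degenerate case.
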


\begin{proof}
As in the proof of \Cref{lem: Linf conv Mscheme}, we subtract \eqref{eq: time-discrete weak 1} from Equation \eqref{eq: linear weak 1} and rewrite \eqref{eq: linear weak 2} which yields
\begin{subequations} \label{eq: weak diff sys2}
    \begin{align}
        &(h_{n-1}e^i_u, \phi) + \t(\nabla e^i_w, \nabla \phi) = 0, \label{eq: weak diff 12}\\
        &(L^i_n e^i_u, \xi) = (e^i_w, \xi) + ((L^i_n - \Phib^{\prime}(\zeta))e^{i-1}_{u}, \xi) \label{eq: weak diff 22},
    \end{align}
\end{subequations}
where $\zeta \in I(u^{i-1}_n, u_{n})$.
Choosing $\phi = e^{i}_{w} \in H_{0}^{1}(\Omega)$ and $\xi = h_{n-1}e_{u}^{i} \in L^{2}(\Omega)$ we  combine the equations and obtain
\begin{equation}\label{eq:proof_convM}
    (h_{n-1}L_{n}^{i}e^{i}_{u},e^{i}_{u}) + \t(\nabla e_{w}^{i}, \nabla e^{i}_{w}) = (h_{n-1}(L_{n}^{i}-\Phib^{\prime}(\zeta))e^{i-1}_{u},e^{i}_{u}).
\end{equation}
 We estimate the right hand side using Young's inequality \eqref{eq: Young} and $0 \leq L^i_n - \Phi^{\prime}(\zeta) \leq 2M\t^\g $, as proven in \Cref{lemma: inequalities},  to find that for any $\rho > 0$,
\begin{equation} \label{ineq: right}
\begin{split}
    (h_{n-1}(L_{n}^{i}-\Phib^{\prime}(\zeta))e^{i-1}_{u},e^{i}_{u}) &\leq 2M\t^\g  \int_{\Om}\sqrt{h_{n-1}}e^{i-1}_{u} \sqrt{h_{n-1}}e^{i}_{u}, \\
    &\leq \frac{M\t^\g }{\rho}\int_{\Om} h_{n-1}|e^{i-1}_{u}|^2 + \rho M \t^\g  \int_{\Om} h_{n-1}|e^{i}_{u}|^2.
\end{split}
\end{equation}
In the non-degenerate case, i.e. $\phi_m>0$,  we estimate the left hand of \eqref{eq:proof_convM} similarly  using that $L^i_n \geq \phi_\Mini + M\t^\g $ and obtain
\begin{equation} \label{ineq: left}
    (\phi_\Mini+M\t^\g ) \int_{\Om}h_{n-1}|e^i_u|^2 + \t \|\nabla e^i_w\|_{L^2(\Om)}^2 \leq (h_{n-1} L_{n}^{i}e^{i}_{u},e^{i}_{u}) + \t \|\nabla e^i_w\|_{L^2(\Om)}^2.
\end{equation}
Combining \eqref{ineq: right} and \eqref{ineq: left} it follows that 
\begin{equation} \label{ineq: comb 1}
    (\phi_\Mini+(1-\rho)M\t^\g ) \int_{\Om}h_{n-1}(e^i_u)^2 + \t \|\nabla e^i_w\|_{L^2(\Om)}^2 \leq \frac{M\t^\g }{\rho}\int_{\Om} h_{n-1}(e^{i-1}_{u})^2.
\end{equation}
 which implies that 
\begin{equation} \label{eq: M-rho norm contraction}
    \norm{(e^i_u, e^i_w)}_{M,\r} \leq \sqrt{\frac{M\t^\g }{\rho(\phi_\Mini+(1-\rho)M\t^\g )}} \norm{(e^{i-1}_u, e^{i-1}_w)}_{M,\r},
\end{equation}
where 
\begin{equation}\label{eq: M-rho norm}
    \norm{(e^i_u, e^i_w)}_{M,\r}^2 \coloneqq  \int_{\Om}h_{n-1}|e^i_u|^2 + \frac{\t}{\phi_\Mini+(1-\rho)M\t^\g } \|\nabla e^i_w\|_{L^2(\Om)}^2.
\end{equation}
 Equation \eqref{eq: M-rho norm} defines a norm if $0 < \rho < 1 + \frac{\phi_\Mini}{M\t^\g }$, and  
choosing $0<\rho=\rho^{\ast} = \frac{1}{2}(1 + \frac{\phi_\Mini}{M\t^\g }) <1 + \frac{\phi_\Mini}{M\t^\g }$, minimizes the contraction rate. Hence, 
\begin{equation}\label{eq:M-contact-best}
   \norm{(e^i_u, e^i_w)}_{M, \rho^\ast} \eqqcolon\norm{(e^i_u, e^i_w)}_{M} \leq \frac{2M\t^\g }{\phi_\Mini + M\t^\g } \norm{(e^{i-1}_u, e^{i-1}_w)}_{M},
\end{equation}
which is a contraction if $\t  < ({\phi_\Mini}/{M})^\frac{1}{\g}$, and the contraction rate scales with $\t^\g$. 
 As in the proof of \Cref{lem: conv L-scheme} 
 we conclude that $u^i_n \to u_n$ in $L^2(\Om)$ and $w^i_n \to w_n$ in $H^1(\Om)$ by Banach's fixed-point theorem. 

The degenerate case is dealt with in the same manner as in the proof of \Cref{lem: conv L-scheme}. Analogous to \eqref{eq: L-scheme inequality}, we find using $L^i_n \geq 2M \t^\g$ and $\e:= \inf\left(L^i_n-\brac{\frac{\d \Phib}{\d u}}^{i-1}\right)\overset{\eqref{ineq: 2}}\geq (M-M_0)\t>0$ that
\begin{equation}
    M\t^\g  \int_\Om h_{n-1}|e^i_u|^2 + \frac{\varepsilon}{2}\int_{\Om}h_{n-1}|e^i_u - e^{i-1}_u|^2 + \t \norm{\nabla e^i_w}^2_{L^2(\Om)} \leq M\t^\g  \int_{\Om}h_{n-1}|e^{i-1}_u|^2 \label{eq: M-scheme inequality deg}.
\end{equation}
Summing both sides of \eqref{eq: M-scheme inequality deg} yields
\begin{equation}
\begin{split}
       0 &\leq \frac{\varepsilon}{2}\sum_{i=1}^N \brac{\int_{\Om}h_{n-1}|e^i_u - e^{i-1}_u|^2} + \t \sum_{i=1}^N \norm{\nabla e^i_w}_{L^2(\Om)}^2 \\
       &\leq M\t^\g  \brac{\int_{\Om}h_{n-1}|e^0_u|^2 - \int_{\Om}h_{n-1}|e^N_u|^2} < \infty. 
\end{split}
\end{equation}
This implies that $w^i_n \to w_n$ in $\H$, $\Phib(u^i_n) \to w_n = \Phib(u_n)$ in $L^2(\Om)$ and $u^i_n \to u_n$ in $L^1(\Om)$.  The result for the positive part of $u^i_n$ follows from \Cref{rem: un positive} which completes the proof.
\end{proof}

\begin{remark}
Following the proof for the L-scheme in Lemma \ref{lem: conv L-scheme} we would obtain the contraction rate
\begin{equation}
    \norm{(e^i_u, e^i_w)}_{M} \leq \sqrt{\frac{2M\t^\g }{\phi_\Mini + M\t^\g }} \norm{(e^{i-1}_u, e^{i-1}_w)}_{M},
\end{equation}
which is a larger than the rate in Lemma \ref{lem: conv M-scheme} if $\t < (\phi_\Mini / M)^\frac{1}{\g}$. However, this is the range of $\t$ where the M-scheme provides a contraction, and therefore \Cref{lem: conv M-scheme}, specifically \eqref{eq:M-contact-best}, provides a sharper result. Furthermore, the contraction rate stated in \Cref{lem: conv M-scheme} is half the contraction rate obtained for the $L^\infty$-norm in \Cref{lem: Linf conv Mscheme}. 
\end{remark}

\begin{proof}[Proof of \Cref{thm: conv M-scheme}]
    The existence and uniqueness of solutions $\{(u^i_n, w^i_n)\}_{i\in\N}\subset L^2(\Om)\times \H$ for \eqref{eq: linear weak} follows from \Cref{lem: exist L} and the convergence results from \Cref{lem: conv M-scheme}. 
\end{proof} 

\section{Numerical Results}\label{sec: alg}
We use the finite element method (FEM) to compute the solutions as it directly links to the weak form of the Equations \eqref{eq: time-discrete weak system} and \eqref{eq: linear weak}. The FEniCSx package in Python is used to solve the finite-dimensional problems \cite{AlnaesEtal2014, BasixJoss, ScroggsEtal2022}, and all the code is made available on GitHub\footnote{Link to the GitHub repository: \href{https://github.com/Rsmeets99/M_scheme_biofilm_PDE}{https://github.com/Rsmeets99/M\_scheme\_biofilm\_PDE}}. Let $\mathcal{T}$ denote the triangulation of $\Om$ and let $\mathcal{P}_{p}(\mathcal{T})$ be the space of element-wise polynomials of degree up to $p \in \mathds{N}$. We define the FEM solutions to be $\tilde{u}_{n,h}^i \in U_h \coloneqq \mathcal{P}_{0}(\mathcal{T})$ and $w_{n,h}, w^i_{n,h} \in V_h \coloneqq  \mathcal{P}_{1}(\mathcal{T}) \cap H^1_0(\Om)$. For $\mu=1$, we take the spatial approximation $v_{n,h}\in V_h$ of $v_n$ since in this case $v_n$ is differentiable, and $v_{n,h}\in U_h$ otherwise. This leads us to the following problem:

\begin{problem}[Finite element system] \label{prob: FEM system}
    Given $u_{n-1,h}, u^{i-1}_{n,h}, v_{n-1,h}$, find $\brac{\tilde{u}^i_{n,h}, w^i_{n,h}} \in Z$, such that
\begin{equation} \label{eq: FEM equation}
\begin{split}
        &\brac{h_{n-1} \tilde{u}^i_{n,h}}, \phi_h + \t \brac{\nabla w^i_{n,h}, \nabla \phi_h}=\brac{u_{n-1,h}, \phi_h} \\
         &\brac{L^i_n \tilde{u}^i_{n,h}-w^i_{n,h}, \xi_h} = \brac{L^i_n u^{i-1}_{n,h} - \Phi\brac{u^{i-1}_{n,h}}, \xi_h}
\end{split}
\end{equation}
    for all $\brac{\xi_h, \phi_h} \in Z$, where $Z$ denotes the (mixed) finite element space $Z = U_h \times V_h$. Afterwards, set $u^i_{n,h} = \posi{\tilde{u}^i_{n,h}}$.
\end{problem}

We iteratively solve $u^i_{n,h}$ and $w^i_{n,h}$ until the following stopping criteria is met:
\begin{equation} \label{eq: FEM error}
    \norm{\brac{e^i_{u,h}, e^i_{w,h}}} = \int_{\Om} L^i_n |e^i_{u,h}|^2 \dx + \t \norm{\nabla e^i_{w,h}}^2_{L^2(\Om)} < \text{tol},
\end{equation}
where  $e^i_{u,h} \coloneqq u^i_{n,h} - u^{i-1}_{n,h}$, $e^i_{w,h} \coloneqq w^i_{n,h} - w^{i-1}_{n,h}$ and $\text{tol} \in \R^+_\ast$ is some tolerance. Once the tolerance  is reached, we set $u_{n,h} = u^i_{n,h}$ and calculate $v_{n,h}\in V_h$ by solving 
\begin{equation} \label{eq: FEM vn PDE}
    (v_{n,h}, \eta_h) + \t \mu(D(u_{n,h}) \nabla v_{n,h}, \nabla \eta_h) = \t(g(u_{n,h}, v_{n-1,h}), \eta_h) + (v_{n-1, h},\eta_h),\qquad \forall \eta_h \in V_h,
\end{equation}
in the PDE-PDE case ($\mu=1$), or by solving $v_{n,h}\in U_h$
\begin{equation} \label{eq: FEM vn ODE}
    (v_{n,h}, \eta_h) = \t(g(u_{n,h}, v_{n-1,h}), \eta_h) + (v_{n-1, h},\eta_h),
    \qquad \forall \eta_h \in U_h,
\end{equation}
in the PDE-ODE case ($\mu=0$). 

Depending on the specific function $g$ we could update $v_{n,h}$ in the PDE-ODE case explicitly through
\begin{equation}
    v_{n,h} = v_{n-1,h} + \t g(u_{n,h}, v_{n-1,h}).
\end{equation}
However, in general we cannot guarantee that  $v_{n,h}\in U_h$, while Equation \eqref{eq: FEM vn ODE} provides a projection onto the correct space. The full algorithm is summarised in Algorithm \ref{alg: M-scheme}. 
Instead of solving the full system \eqref{eq: linear weak}, it is possible to eliminate $u$ from the equations and solve only for $w$. 
While faster to solve due to the reduced dimension of the resulting linear system, it does require correct projection operators and this leads to a modification of $M$ making it dependent on the mesh size $h$. More details are given in \cite[Section 5.1.2]{ThesisRobin}. 

\begin{algorithm}
\caption{M-scheme algorithm} \label{alg: M-scheme}
$t = t_{\text{start}}$ \;
\For{$t<T$}{
    error = 1 \;
    \While{$\text{error} > \text{tol}$}{
    Solve system \eqref{eq: FEM equation} from \Cref{prob: FEM system} for $\tilde{u}^i_{n,h}, w^i_{n,h}$ \;
    Set $u^i_{n,h} = \posi{\tilde{u}^i_{n,h}}$ \;
    Compute new error\;
    }
    set $u_{n,h} = u^i_{n,h}$ \;
    set $w_{n,h} = w^i_{n,h}$ \;
    \eIf{$\mu = 1$}{Compute $v_{n,h}$ in the PDE case using $u_{n,h}$ through solving equation \eqref{eq: FEM vn PDE}}
    {Compute $v_{n,h}$ in the ODE case using $u_{n,h}$ through solving equation \eqref{eq: FEM vn ODE} }
    $t = t+\t$ \;
}
\end{algorithm}

We test our scheme for 3 different problems: a porous medium equation, the biofilm PDE-PDE model and the biofilm PDE-ODE model. The goal is to get numerical convergence results, as well as to compare the performance of the M-scheme to the Newton. 

\begin{remark}[Newton scheme]
    The `true' Newton scheme with $M = 0$ may not converge in the degenerate case without regularization. To overcome this problem we use the M-scheme with a very small $M$ (e.g. $M=10^{-7} \ll \text{tol}$) which is still large enough so that the scheme converges in most cases. This is a form of a regularized Newton scheme \eqref{eq:different_schemes}.
\end{remark}

\begin{remark}[L-scheme]
For the porous medium equation, the L-scheme is at least an order of magnitude slower than the M-scheme, while for the biofilm models the L required for convergence is so large, that it becomes multiple orders of magnitude slower. Therefore, we will not show the results for the L-scheme in our comparison.
\end{remark}

\begin{remark}[Adaptive M-scheme]
When using the M scheme, in practice, 
it is beneficial to choose $M$ adaptively in each step using a posteriori estimators. 
In the biofilm case, with an adaptive scheme, the required $M$ increases if $u_n$ approaches 1 which ensures convergence, while $M$ is small and therefore the scheme is fast when $u_n$ is bounded away from 1.
Similar work for the L-scheme has been done in \cite{Stokke2023}. This is however beyond the scope of our current work.
\end{remark}

\subsection{Porous medium equation}\label{sec: MPME}
As a first test case, we consider the 1D porous medium equation with a reaction term   
\begin{equation}\label{eq: mod PME}
    \partial_t u = \Delta \brac{u^m} + \beta u,
\end{equation}
where $m > 1$ and $\b \in \R$ in a bounded interval $\Om \subset \R$ with homogeneous Dirichlet boundary conditions. In our notation, this corresponds to $b = \infty$, $\Phi(u) = u^m$ and $f(v) = \beta$. Note here that $h_{n-1} > 0$ if $\b < 1/\t$. It serves as a good benchmark problem as \eqref{eq: mod PME} has the exact solution $u(x,t) = e^{\b t} z(x,s)$, where $s = \frac{1}{\b (m-1)}e^{\b (m-1) t}$ and $z$ 
is the Barenblatt solution \cite{vazquez2007porous} given by
\begin{equation}
    z(x,t) = t^{-\frac{d}{d(m-1)+2}}\posi{C - \frac{m-1}{2m(d(m-1)+2)}\left|xt^{-\frac{1}{d(m-1)+2}} \right|^2}^{\frac{1}{m-1}}.
\end{equation}
The exact solution $u$ is H\"older continuous in time with exponent $\g = 1/(m-1)$. \\

We first verify the consistency of the time discretisation stated in \Cref{theo:time_convergence} by computing the error in the left-hand side of \eqref{eq:Conv-H1} for different values of $\t$. As initial condition, we take the exact solution $u$ evaluated at $t = 0.5$.  The results are shown in \Cref{fig: MPME 1D conv time} exhibiting an order of convergence 
between 0.5 and 1. 
Note that the results are independent of the choice of $M$ and $\g$ as long as the linearisation scheme converges.

\begin{figure}[h]
    \centering
    \includegraphics[scale = 0.5]{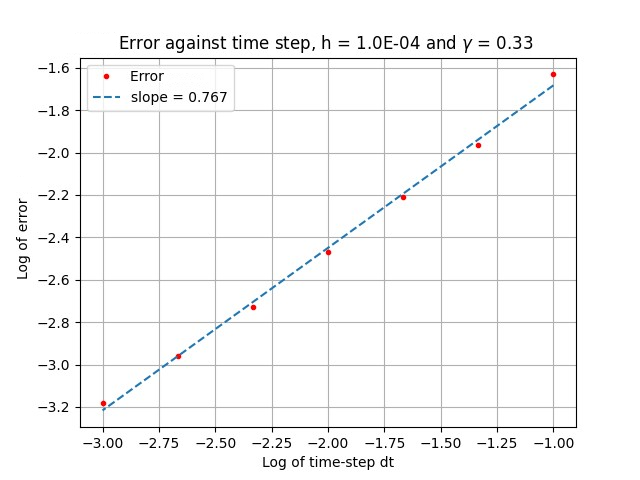}
    \caption{Error in \eqref{eq:Conv-H1} against time step size $\t$ for $h = 10^{-4}$, $m=4$, $\g = 1/3$, time $0.5 \leq t \leq 1$, $\text{tol} = 10^{-7}$, $d=1$, $\beta = 1$.}
    \label{fig: MPME 1D conv time}
\end{figure}

A convergence study of the iterative schemes for different time-steps $\t \in \{10^{-1}, 10^{-1.5},$ $10^{-2}, 10^{-2.5}\}$ was also performed with the mesh size $h$ ranging between $0.1$ and $0.005$. Then the average number of iterations needed to get to a final time $T = 1.1$ was determined for different values of $M \in \{10^{-1}, 10^{-2}, 10^{-3}, 10^{-7}\}$. The results are displayed in \Cref{fig: MPME 1D hdep}. 
We first note that for each $\t$ there is an optimal value $M$ and the M-scheme out-performs the Newton-scheme in this case, which is most apparent for smaller mesh sizes and larger time-steps. 
It is expected that the convergence of the Newton-scheme is conditioned by  restrictions on the time step size depending on the mesh size \cite{NS3}. For instance, for $\t = 10^{-1}$ the M-scheme performs significantly better than the Newton-scheme for small mesh sizes, while the schemes are equivalent for $\t = 10^{-2.5}$. Secondly, we note that the optimal value $M$ stays optimal for all mesh sizes. 
Hence, we can find the optimal $M$
for a coarse mesh, 
and use it then for computations on finer meshes \cite{erlend}. Lastly, the number of iterations required decreases with decreasing $\t$. The reason is two-fold: first, we expect the convergence rate to increase as $\t$ gets smaller by \Cref{thm: conv M-scheme}, and secondly, the difference between the solutions of two consecutive time steps $u_{n}$ and $u_{n-1}$ decreases when $\t$ does, and therefore the iterations start with a better initial guess $u^0_n = u_{n-1}$. 

\begin{figure}[h]
    \centering
    \begin{subfigure}{0.495\textwidth}
        \centering
        \includegraphics[width=\textwidth]{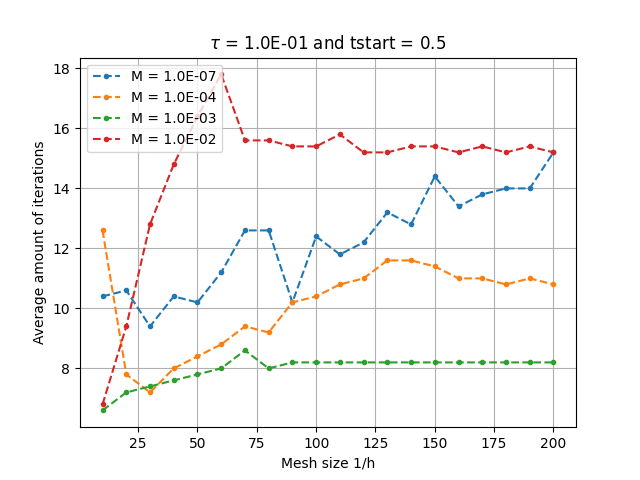}
        \caption[]
        {{\small $\t = 10^{-1}$}}    
        \label{fig: MPME 1D 0.1}
    \end{subfigure}
    \begin{subfigure}{0.495\textwidth}  
        \centering 
        \includegraphics[width=\textwidth]{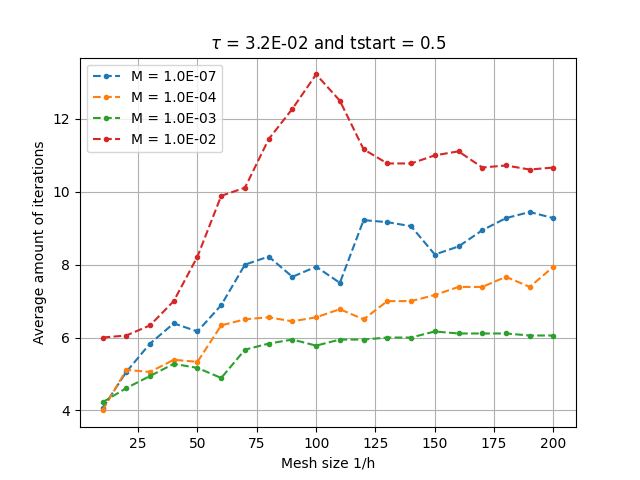}
        \caption[]
        {{\small $\t = 10^{-1.5}$}}    
        \label{fig: MPME 1D 0.03}
    \end{subfigure}
    \begin{subfigure}{0.495\textwidth}   
        \centering 
        \includegraphics[width=\textwidth]{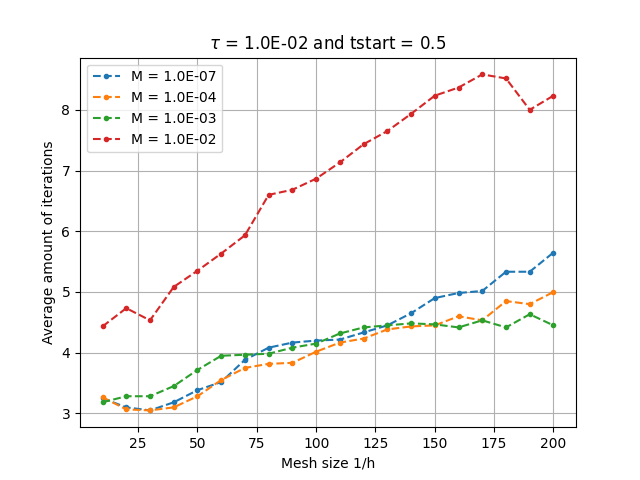}
        \caption[]
        {{\small $\t = 10^{-2}$}}    
        \label{fig: MPME 1D 0.01}
    \end{subfigure}
    \begin{subfigure}{0.495\textwidth}   
        \centering 
        \includegraphics[width=\textwidth]{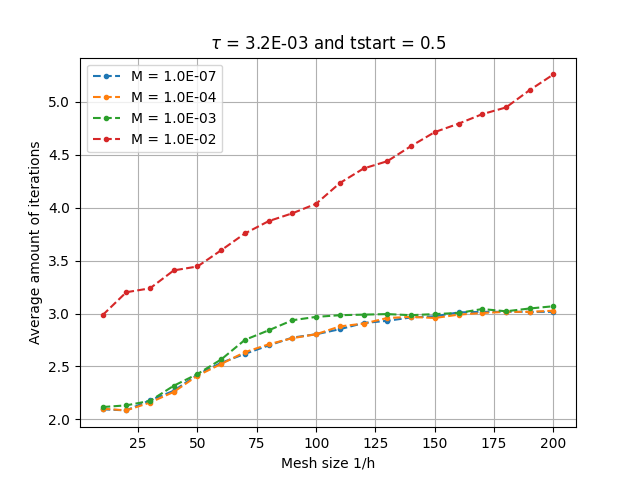}
        \caption[]
        {{\small $\t = 10^{-2.5}$}}    
        \label{fig: MPME 1D 0.003}
    \end{subfigure}
    \caption[]
    {\small Average iterations required for solving \eqref{eq: mod PME} in 1D for varying mesh size $h$ and time steps $\t$, with $m=4$, $\g = 1/3$, for time $0.5 \leq t \leq 1.1$, $\text{tol} = 10^{-5}$.} 
    \label{fig: MPME 1D hdep}
\end{figure}

Having found an optimal $M$ for $\g = 1/(m-1)$ (which is $M=10^{-3}$), we next test scaling of the contraction rates predicted by \Cref{thm: conv M-scheme}. Observe that not all assumptions are satisfied as the problem is degenerate. Nevertheless, we find a scaling of the contraction rate with some power of $\t$, as shown in \Cref{fig: MPME 1D conv gamma}. The contraction rate is calculated as the geometric mean of $\norm{(e^i_u,e^i_w)}/\norm{(e^{i-1}_u,e^{i-1}_w)}$ over the first 3 iterations. 
Note that the convergence rate $\a $ appears to scale linearly with $\t^{0.42}$ instead of $\t^{\g}=\t^{0.33}$. This `super-convergence' can be explained due to the fact that the challenging part of the numerical solution is the free boundary, while the solution is much more regular in the rest of the domain. For the test case in 1D, the free boundary consists of just two points. Hence, it does not play a deciding role in the convergence behaviour.

\begin{figure}[h]
    \centering
    \includegraphics[scale = 0.5]{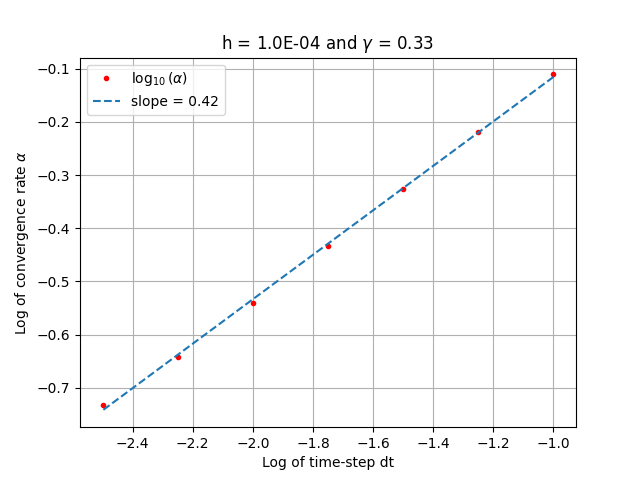}
    \caption{Convergence rate $\a$ against time step size $\t$ for $h = 10^{-4}$, $m=4$, $\g = 1/3$, $t = 0.5$, $d=1$.}
    \label{fig: MPME 1D conv gamma}
\end{figure}

\subsection{Biofilm equations}
In this section, we investigate the robustness of the M-schem for the more challenging biofilm models \eqref{eq: main} which are coupled systems involving a singular-degenerate diffusion equation. 
We consider the case $\mu = 1$ corresponding to a PDE-PDE coupling \cite{EBERLPDEPDE}, and $\mu = 0$ corresponding to a PDE-ODE coupling \cite{EBERLPDEODE}. The corresponding functions in \eqref{eq: main} are as follows:
\begin{subequations} \label{eq: PDE-PDE ex num}
    \begin{align}
        &\Phi^\prime(u) = d_1\frac{u^\a}{\brac{1-u}^\b}, &f(v) = k_3\frac{v}{v + k_2} - k_4, \\
        &D(u) = d_2, &g(u,v) = - k_1\frac{uv}{v+k_2},
    \end{align}
\end{subequations}
for some given constants $k_1,k_2,k_3,k_4,d_1,d_2 > 0$ and $\a,\b> 1$. For our comparison of the M-scheme and (regularized) Newton-scheme, we will use the same parameters as in \cite{EBERLPDEODE}, which are $k_1 = 0.4$, $k_2 = 0.01$, $k_3 = 1$, $k_4 = 0.42$, $d_1 = 10^{-6}$, $\a = 4$, $\b = 4$. 
For fixed $\a = \b = 4$, we can calculate $\Phi(u)$ explicitly, 
\begin{equation*} 
\begin{split}
    \Phi(u) = 10^{-6}\int_0^u \frac{s^4}{\brac{1-s}^4}ds = 10^{-6}\brac{ \frac{18u^2 - 30u + 13}{3\brac{1-u}^3} + u + 4\ln\brac{1-u} - \frac{13}{3}}.
\end{split}
\end{equation*} 

Note that we cannot simply use $\Phi$ but have to use its regularized form $\Phib$ as given in \Cref{def: reg Phi}. To define $\Phib$ we use 
 \Cref{thm: main time 1} to calculate the upper bound $\Breve{u}$. As the initial condition we take
\begin{equation}
    u_0(x) = \frac{h}{r} \left( \sqrt{\max(0, r^2 - (x - x_1)^2)} + \sqrt{\max(0, r^2 - (x - x_2)^2)} \right),
\end{equation}
with a maximum height $h = 0.9$, radius $r = 0.2$, $x_1 = -0.3$, $x_2 = 0.3$. For the domain $\Omega = (-1,1)$, this yields $\Breve{u} = 0.992$. 
For  $\g$ we take $\g=1/\a = 1/4$ since the regularity of solutions is expected to be similar as for the porous medium equation with the diffusion coefficient $\Phi^\prime \approx u^\a$ when $u$ is small (close to the free boundary). We assume homogeneous Neumann boundary conditions for $u$ and, if $\mu=1$, mixed boundary conditions for $v$. Namely, at the boundary $x=-1$ we specify the Dirichlet condition $v = 1$ and at $x=1$ homogeneous Neumann boundary conditions. While homogeneous Neumann conditions for $u$ are not covered by our theory, we still expect the results to hold as in the simulations, the biofilm region marked by the support of $u$, never reaches the boundary. The well-posedness results for time-continuous models in  \cite{HISSINKMULLER, mitra2023wellposedness} also apply to inhomogeneous Dirichlet and mixed boundary conditions, and to homogeneous Neumann conditions under certain time restrictions. We impose these boundary conditions as they were chosen for the numerical results in \cite{EBERLPDEODE, EBERLPDEPDE}, from where we also took our parameter values.

The results of the  M-scheme and (regularized) Newton-scheme are given in \Cref{fig: biofilm 1D mu0 hdep} for the PDE-ODE case. For the chosen parameters, the behaviour of the iterative schemes for the PDE-PDE case is almost identical. As  in \Cref{fig: MPME 1D hdep} we see that smaller time-steps $\t$ require fewer iterations for the reasons mentioned before. We only note that the amount of required iterations for the biofilm system is considerably higher. When solutions of the biofilm model approach 1, the diffusion coefficient $\Phib^\prime$ becomes very large, and therefore $L^i_n$ as well. This slows down the convergence of the iterative scheme.

A noticeable difference between the two figures is that if the mesh size is too small for the corresponding time-step, the Newton scheme becomes unstable in the biofilm case and often does not converge. As the time-step size decreases, the Newton scheme starts converging for smaller mesh sizes. However, we see a gap in the performance between the M-scheme and the Newton-scheme for these larger mesh sizes. The reason is that it is impossible to choose an optimal $M$ for the entire time range. A smaller $M$ would have a similar performance as the Newton-scheme for large mesh sizes, but as $u_n$ approaches values closer to 1, the diffusion coefficient blows up and convergence is no longer guaranteed for smaller mesh sizes. The choice of $M$ is therefore dictated by how close $u_n$ gets to 1. A larger $M$ improves stability at the cost of convergence speed. 

\begin{figure}[h]
    \centering
    \begin{subfigure}{0.495\textwidth}
        \centering
        \includegraphics[width=\textwidth]{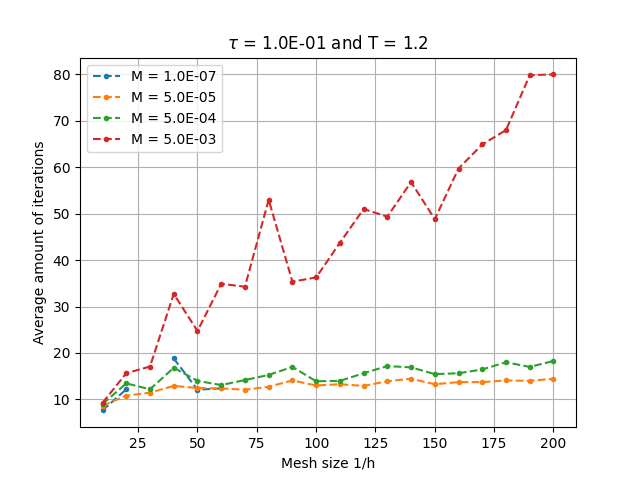}
        \caption[]
        {{\small $\t = 10^{-1}$}}    
        \label{fig: biofilm 1D mu0 0.1}
    \end{subfigure}
    \begin{subfigure}{0.495\textwidth}  
        \centering 
        \includegraphics[width=\textwidth]{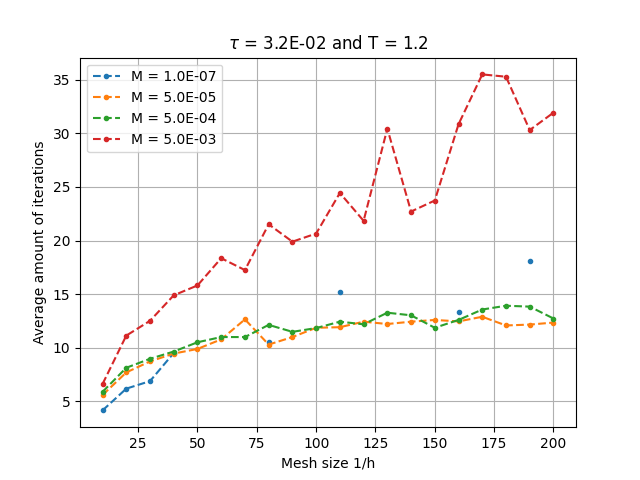}
        \caption[]
        {{\small $\t = 10^{-1.5}$}}    
        \label{fig: biofilm 1D mu0 0.03}
    \end{subfigure}
    \begin{subfigure}{0.495\textwidth}   
        \centering 
        \includegraphics[width=\textwidth]{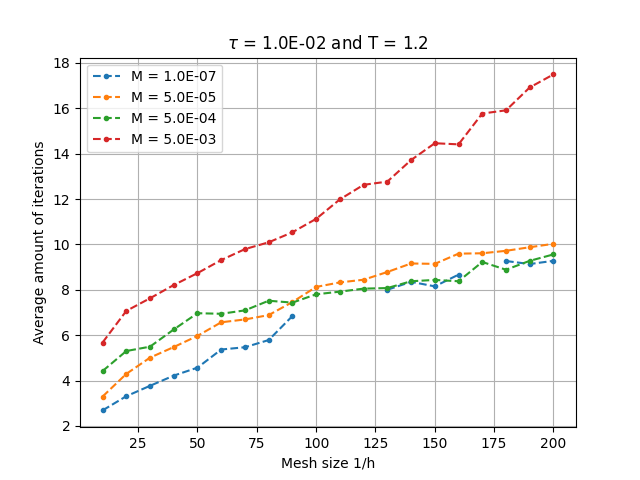}
        \caption[]
        {{\small $\t = 10^{-2}$}}    
        \label{fig: biofilm 1D mu0 0.01}
    \end{subfigure}
    \begin{subfigure}{0.495\textwidth}   
        \centering 
        \includegraphics[width=\textwidth]{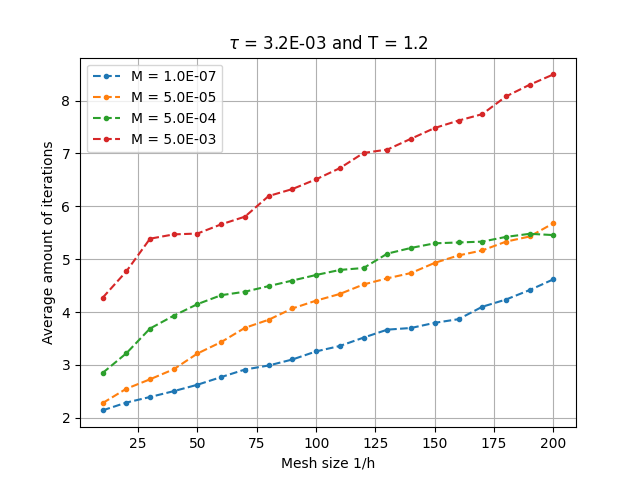}
        \caption[]
        {{\small $\t = 10^{-2.5}$}}    
        \label{fig: biofilm 1D mu0 0.003}
    \end{subfigure}
    \caption[]
    {\small Average iterations required for solving \eqref{eq: PDE-PDE ex num} in 1D for varying mesh size $h$ and time steps $\t$, with $m=4$, $\g = 1/4$, for time $0 \leq t \leq 1.2$, $\mu = 0$ and $\text{tol} = 10^{-5}$.} 
    \label{fig: biofilm 1D mu0 hdep}
\end{figure}

Similarly to \Cref{fig: MPME 1D conv gamma}, we calculate the contraction rate as the geometric mean over the first three iterations for different values of $\t$ in \Cref{fig: biofilm 1D conv gamma}. We find the contraction rate to be approximately $\t^{0.25}$, which aligns with our predicted value $\g$.

\begin{figure}[h!]
    \centering
    \includegraphics[scale = 0.6]{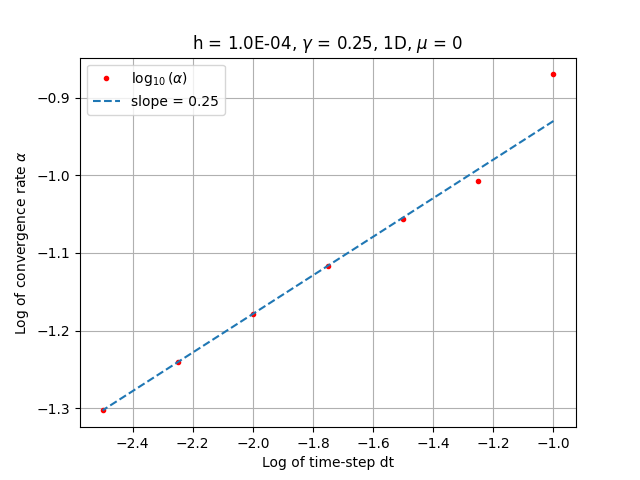}
    \caption{Convergence rate $\a$ against time step size $\t$ for $h = 10^{-4}$, $m=4$, $\g = 1/4$.}
    \label{fig: biofilm 1D conv gamma}
\end{figure}

A test example for both the PDE-PDE and PDE-ODE cases in two dimensions is given in \Cref{fig: PDE-PDE compl sim} and \Cref{fig: PDE-ODE compl sim} respectively. For the initial conditions, we have chosen two hemispheres as in the 1D case and used similar parameter values as disclosed in the caption of both figures. Computationally, these two problems are challenging. We find values of $u_n$ very close to $\Breve{u}$ ($\Breve{u} = 0.989$ and $\Breve{u} = 0.988$ for the PDE-PDE and PDE-ODE cases respectively), which leads to a blow-up of the diffusion coefficient. On top of that, the two blobs possess sharp interfaces that merge at some point creating additional singularities, see \Cref{fig: PDE-ODE compl sim}. For these reasons, the mesh size is kept relatively small to accurately resolve this merging. Despite these challenges, the numerical methods perform robustly, and we recover the expected qualitative behaviour of the solutions of both models.
In the PDE-PDE simulation, we see that since the nutrients $v_n$ diffuse and are constantly added through the Dirichlet boundary conditions on the top boundary, the biofilm expands towards the top, while slowly dying off at the bottom. For the PDE-ODE simulation, the biofilm expands in the radial direction as it consumes the nutrients while dying off in places where nutrients have been depleted. This leads to crater-like structures and inverse colony formation as seen in experiments, e.g. see \cite{EBERLPDEODE}.
Each 2D simulation required a long run-time, and due to the limitation of computational resources, a thorough comparison of iterative schemes could not be conducted in 2D.

\begin{figure}[p!]
    \centering
    \includegraphics[width=\textwidth]{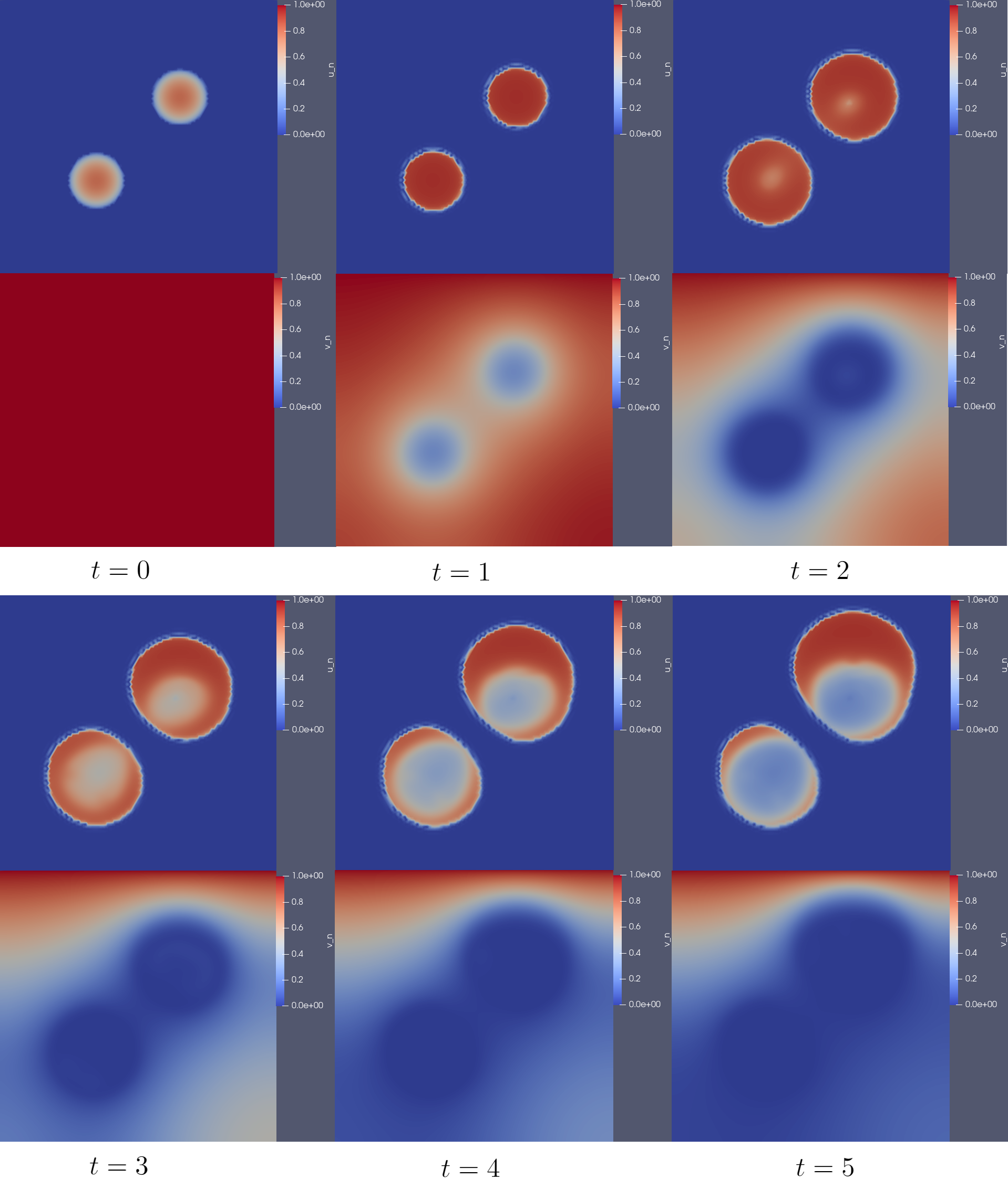}
    \caption{Simulation of the PDE-PDE model using the M-scheme ($M = 10^{-2})$, with $k_1 = 5, k_2 = 0.01, k_3 = 1, k_4 = 0.42, d_1 = 5\cdot 10^{-6}, d_2 = 0.2, \t = 0.01, h = 0.02$ and $\g = 0.5$. The first and third row picture $u_n$ while the second and fourth row $v_n$. For $v_n$ we have homogeneous Neumann boundary conditions at the sides and bottom, and the Dirichlet boundary condition $v=1$ at the top.}
    \label{fig: PDE-PDE compl sim}
\end{figure}

\begin{figure}[p!]
    \centering
    \includegraphics[width=\textwidth]{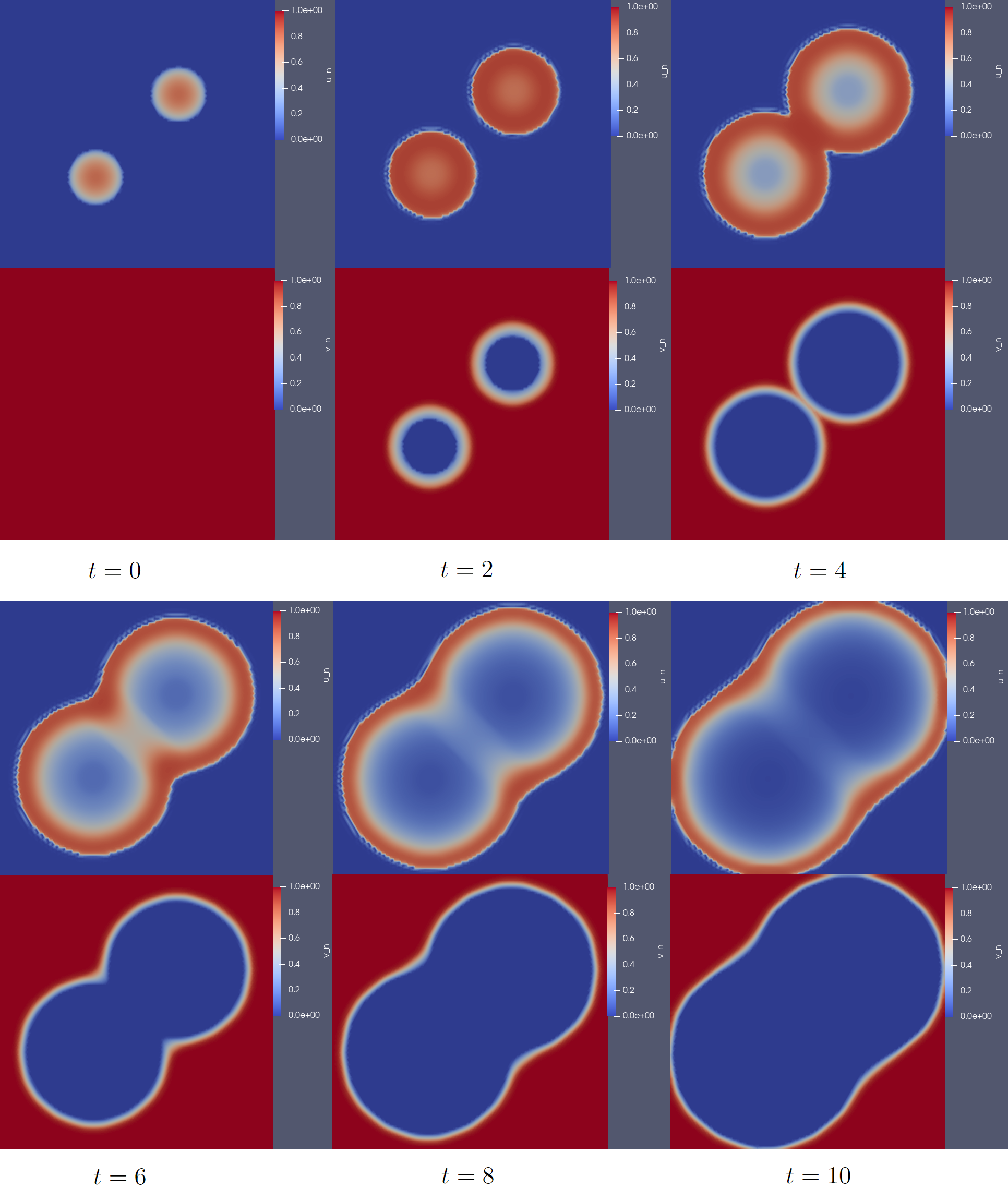}
    \caption{Simulation of the PDE-ODE model using the M-scheme ($M = 10^{-2})$, with $k_1 = 0.8, k_2 = 0.01, k_3 = 1, k_4 = 0.42, d_1 = 8\cdot 10^{-6}, \t = 0.01, h = 0.02$ and $\g = 0.5$. The first and third row picture $u_n$ while the second and fourth row $v_n$.}
    \label{fig: PDE-ODE compl sim}
\end{figure}

\section{Conclusion} \label{sec: concl and disc}

We introduced a semi-implicit time-discretisation scheme for solving a class of degenerate quasi-linear parabolic problems of porous medium type with diffusion coefficients that can also be singular. Such systems model biofilm growth and other nonlinear diffusion processes with sharp interfaces that propagate at a finite speed. The well-posedness of the time-discrete solutions was shown, as well as explicit upper bounds were proved for both the biofilm model ($b = 1$) and  porous medium equations ($b = \infty$). 
We then introduced the L/M-scheme as an iterative linearisation method for solving the quasi-linear elliptic PDEs that resulted from the time discretisation. It was shown that these schemes converge irrespective of the spatial discretisation. In the non-degenerate case, these schemes will even show a contraction with a contraction rate that scales with some power of $\t $ for the M-scheme provided $\t$ is small. Finally, the schemes were implemented numerically using a finite element method and it was shown that for larger time steps $\t$ and finer mesh sizes $h$, the M-scheme outperforms the Newton scheme.

The schemes can be generalised to systems that allow for additional substrates and admit terms for an advective flow field in these equations, see \cite{mitra2023wellposedness}. In a future work, we are considering including a nonlinearity in the time derivative as well which makes the problem doubly degenerate. Such problems are commonly found in multiphase flow through porous medium. Furthermore, one can consider multi-species biofilm models with or without cross-diffusion that comprise multiple degenerate equations that are strongly coupled through the diffusion operator, e.g. see \cite{ghasemi_sonner_eberl_2018, SoEfEb}.  

\section*{Acknowledgements}
K. Mitra acknowledges the support of Research Foundation - Flanders (FWO), the Junior Postdoctoral Fellowship grant  1209322N. K. Mitra and S. Sonner  thank the 
Nederlandse Organisatie voor  Wetenschappelijk
Onderzoek (NWO) for financial support (grant OCENW.KLEIN.358). The work of I.S. Pop was supported by FWO through the Odysseus programme (Project G0G1316N) and the German Research Foundation (DFG) through the SFB 1313, Project Number 327154368. 

\bibliographystyle{siam}

\begin{thebibliography}{10}

\bibitem{alhammali2024numerical}
{\sc A.~Alhammali, M.~Peszynska, and C.~Shin}, {\em Numerical analysis of a
  mixed finite element approximation of a coupled system modeling biofilm
  growth in porous media with simulations}, International Journal of Numerical
  Analysis and Modeling, 21 (2024), pp.~20--64.

\bibitem{AlnaesEtal2014}
{\sc M.~S. Alnaes, A.~Logg, K.~B. Ølgaard, M.~E. Rognes, and G.~N. Wells},
  {\em Unified form language: A domain-specific language for weak formulations
  of partial differential equations}, {ACM} Transactions on Mathematical
  Software, 40 (2014).

\bibitem{alt1983quasilinear}
{\sc H.~W. Alt and S.~Luckhaus}, {\em Quasilinear elliptic-parabolic
  differential equations}, Math. z, 183 (1983), pp.~311--341.

\bibitem{BALSACANTO2017164}
{\sc E.~Balsa-Canto, A.~López-Núñez, and C.~Vázquez}, {\em Numerical
  methods for a nonlinear reaction–diffusion system modelling a batch culture
  of biofilm}, Applied Mathematical Modelling, 41 (2017), pp.~164--179.

\bibitem{barrett2007existence}
{\sc J.~W. Barrett and K.~Deckelnick}, {\em Existence, uniqueness and
  approximation of a doubly-degenerate nonlinear parabolic system modelling
  bacterial evolution}, Mathematical Models and Methods in Applied Sciences, 17
  (2007), pp.~1095--1127.

\bibitem{brenner2017improving}
{\sc K.~Brenner and C.~Cances}, {\em Improving newton's method performance by
  parametrization: the case of the richards equation}, SIAM Journal on
  Numerical Analysis, 55 (2017), pp.~1760--1785.

\bibitem{cances2021error}
{\sc C.~Canc{\`e}s, J.~Droniou, C.~Guichard, G.~Manzini, M.~B. Olivares, and
  I.~S. Pop}, {\em Error estimates for the gradient discretisation method on
  degenerate parabolic equations of porous medium type}, in Polyhedral methods
  in geosciences, Springer, 2021, pp.~37--72.

\bibitem{cao2019error}
{\sc X.~Cao and K.~Mitra}, {\em Error estimates for a mixed finite element
  discretization of a two-phase porous media flow model with dynamic
  capillarity}, Journal of Computational and Applied Mathematics, 353 (2019),
  pp.~164--178.

\bibitem{PS1}
{\sc M.~A. Celia, E.~T. Bouloutas, and R.~L. Zarba}, {\em A general
  mass-conservative numerical solution for the unsaturated flow equation},
  Water Resources Research, 26 (1990), pp.~1483--1496.

\bibitem{cohn2013measure}
{\sc D.~L. Cohn}, {\em Measure Theory: Second Edition}, Birkh{\"a}user Advanced
  Texts Basler Lehrb{\"u}cher, Springer New York, 2013.

\bibitem{DaMiZa}
{\sc E.~S. Daus, P.~Mili\v{s}i\'{c}, and N.~Zamponi}, {\em Analysis of a
  degenerate and singular volume-filling cross-diffusion system modeling
  biofilm growth}, SIAM Journal on Mathematical Analysis, 51 (2019),
  pp.~3569--3605.

\bibitem{duvnjak2006time}
{\sc A.~Duvnjak and H.~J. Eberl}, {\em Time-discretisation of a degenerate
  reaction-diffusion equation arising in biofilm modeling}, Electronic
  Transactions on Numerical Analysis, 23 (2006), pp.~15--38.

\bibitem{EBERLPDEODE}
{\sc H.~J. Eberl, E.~M. Jalbert, A.~Dumitrache, and G.~M. Wolfaardt}, {\em A
  spatially explicit model of inverse colony formation of cellulolytic
  biofilms}, Biochemical Engineering Journal, 122 (2017), pp.~141--151.

\bibitem{Eberl2007finitediff}
{\sc H.~J. Eberl and D.~Laurent}, {\em A finite difference scheme for a
  degenerated diffusion equation arising in microbial ecology}, Electronic
  Journal of Differential Equations, CS15 (2007).

\bibitem{EBERLPDEPDE}
{\sc H.~J. Eberl, D.~F. Parker, and M.~C.~M. van Loosdrecht}, {\em A new
  deterministic spatio-temporal continuum model for biofilm development},
  Journal of Theoretical Medicine, 3 (2001), pp.~161--175.

\bibitem{efendiev2009existence}
{\sc M.~A. Efendiev, S.~Zelik, and H.~J. Eberl}, {\em Existence and longtime
  behavior of a biofilm model}, Communications on Pure \& Applied Analysis, 8
  (2009), pp.~509--531.

\bibitem{emerenini2016mathematical}
{\sc B.~O. Emerenini, S.~Sonner, and H.~J. Eberl}, {\em Mathematical analysis
  of a quorum sensing induced biofilm dispersal model and numerical simulation
  of hollowing effects}, Mathematical Biosciences \& Engineering, 14 (2016),
  pp.~625--653.

\bibitem{evans10}
{\sc L.~C. Evans}, {\em Partial Differential Equations: Second Edition},
  American Mathematical Society, Providence, R.I., 2010.

\bibitem{ghasemi_sonner_eberl_2018}
{\sc M.~Ghasemi, S.~Sonner, and H.~J. Eberl}, {\em Time adaptive numerical
  solution of a highly non-linear degenerate cross-diffusion system arising in
  multi-species biofilm modelling}, European Journal of Applied Mathematics, 29
  (2018), p.~1035–1061.

\bibitem{HELMER2023386}
{\sc C.~Helmer, A.~Jüngel, and A.~Zurek}, {\em Analysis of a finite-volume
  scheme for a single-species biofilm model}, Applied Numerical Mathematics,
  185 (2023), pp.~386--405.

\bibitem{Helmig}
{\sc R.~Helmig}, {\em Multiphase Flow and Transport Processes in the
  Subsurface}, Environmental Science and Engineering, Springer Berlin,
  Heidelberg, 1997.

\bibitem{HISSINKMULLER2}
{\sc V.~{Hissink Muller}}, {\em Interior {H\"older} continuity for
  singular-degenerate porous medium type equations with an application to a
  biofilm model}, Journal of Evolution Equations, 22 (2022), pp.~1--92.

\bibitem{HISSINKMULLER}
{\sc V.~{Hissink Muller} and S.~Sonner}, {\em Well-posedness of
  singular-degenerate porous medium type equations and application to biofilm
  models}, Journal of Mathematical Analysis and Applications, 509 (2022),
  p.~125894.

\bibitem{Jager1991_92}
{\sc W.~J. J\"ager and J.~Kacur}, {\em Solution of porous medium type systems
  by linear approximation schemes.}, Numerische Mathematik, 60 (1991/92),
  pp.~407--428.

\bibitem{Jager1995}
{\sc W.~J. J\"ager and J.~Kacur}, {\em Solution of doubly nonlinear and
  degenerate parabolic problems by relaxation schemes}, ESAIM: M2AN, 29 (1995),
  pp.~605--627.

\bibitem{PS2}
{\sc F.~List and F.~A. Radu}, {\em A study on iterative methods for solving
  richards` equation}, Computational Geosciences, 20 (2016).

\bibitem{mitra2021existence}
{\sc K.~Mitra}, {\em Existence and properties of solutions of the extended
  play-type hysteresis model}, Journal of Differential Equations, 288 (2021),
  pp.~118--140.

\bibitem{MITRA20191722}
{\sc K.~Mitra and I.~S. Pop}, {\em A modified l-scheme to solve nonlinear
  diffusion problems}, Computers \& Mathematics with Applications, 77 (2019),
  pp.~1722--1738.
\newblock 7th International Conference on Advanced Computational Methods in
  Engineering (ACOMEN 2017).

\bibitem{mitra2023wellposedness}
{\sc K.~Mitra and S.~Sonner}, {\em Well-posedness and qualitative properties of
  quasilinear degenerate evolution systems}, arXiv preprint, 2304.00175 (2023).

\bibitem{mitra2019wetting}
{\sc K.~Mitra and C.~J. van Duijn}, {\em Wetting fronts in unsaturated porous
  media: The combined case of hysteresis and dynamic capillary pressure},
  Nonlinear Analysis: Real World Applications, 50 (2019), pp.~316--341.

\bibitem{otto1996l1}
{\sc F.~Otto}, {\em {L1-contraction and uniqueness for quasilinear
  elliptic--parabolic equations}}, Journal of Differential Equations, 131
  (1996), pp.~20--38.

\bibitem{Park}
{\sc E.~J. Park}, {\em Mixed finite elements for nonlinear second-order
  elliptic problems}, SIAM Journal on Numerical Analysis, 32 (1995),
  pp.~865--885.

\bibitem{POP2004365}
{\sc I.~S. Pop, F.~A. Radu, and P.~Knabner}, {\em Mixed finite elements for the
  richards’ equation: linearization procedure}, Journal of Computational and
  Applied Mathematics, 168 (2004), pp.~365--373.
\newblock Selected Papers from the Second International Conference on Advanced
  Computational Methods in Engineering (ACOMEN 2002).

\bibitem{Pop2011RegularizationSF}
{\sc I.~S. Pop and B.~Schweizer}, {\em Regularization schemes for degenerate
  richards equations and outflow conditions}, Mathematical Models and Methods
  in Applied Sciences, 21 (2011), pp.~1685--1712.

\bibitem{RaduKumar}
{\sc F.~A. Radu, K.~Kumar, J.~M. Nordbotten, and I.~S. Pop}, {\em A robust,
  mass conservative scheme for two-phase flow in porous media including hölder
  continuous nonlinearities}, IMA Journal of Numerical Analysis, 38 (2017),
  pp.~884--920.

\bibitem{NS3}
{\sc F.~A. Radu, I.~S. Pop, and P.~Knabner}, {\em Newton---type methods for the
  mixed finite element discretization of some degenerate parabolic equations},
  in Numerical Mathematics and Advanced Applications, A.~B. de~Castro,
  D.~G{\'o}mez, P.~Quintela, and P.~Salgado, eds., Berlin, Heidelberg, 2006,
  Springer Berlin Heidelberg, pp.~1192--1200.

\bibitem{RaSuEb}
{\sc K.~Rahman, R.~Sudarsan, and H.~J. Eberl}, {\em A mixed-culture biofilm
  model with cross-diffusion}, Bulletin of Mathematical Biology, 77 (2015),
  pp.~2086--2124.

\bibitem{reisch2023analytical}
{\sc C.~Reisch, A.~Navas-Montilla, and I.~{\"O}zgen-Xian}, {\em Analytical and
  numerical insights into wildfire dynamics: Exploring the
  advection-diffusion-reaction model}, arXiv preprint arXiv:2307.16174,
  (2023).

\bibitem{BasixJoss}
{\sc M.~W. Scroggs, I.~A. Baratta, C.~N. Richardson, and G.~N. Wells}, {\em
  Basix: a runtime finite element basis evaluation library}, Journal of Open
  Source Software, 7 (2022), p.~3982.

\bibitem{ScroggsEtal2022}
{\sc M.~W. Scroggs, J.~S. Dokken, C.~N. Richardson, and G.~N. Wells}, {\em
  Construction of arbitrary order finite element degree-of-freedom maps on
  polygonal and polyhedral cell meshes}, {ACM} Transactions on Mathematical
  Software, 48 (2022).

\bibitem{SEUS2018331}
{\sc D.~Seus, K.~Mitra, I.~S. Pop, F.~A. Radu, and C.~Rohde}, {\em A linear
  domain decomposition method for partially saturated flow in porous media},
  Computer Methods in Applied Mechanics and Engineering, 333 (2018),
  pp.~331--355.

\bibitem{ThesisRobin}
{\sc R.~K.~H. Smeets}, {\em Numerical schemes for singular porous medium type
  equations}, Master's thesis, Radboud University Nijmegen \& Hasselt
  University, 2023.

\bibitem{SoEfEb}
{\sc S.~Sonner, M.~A. Efendiev, and H.~J. Eberl}, {\em On the well-posedness of
  mathematical models for multicomponent biofilms}, Math. Methods Appl. Sci.,
  38 (2015), pp.~3753--3775.

\bibitem{Stokke2023}
{\sc J.~S. Stokke, K.~Mitra, E.~Storvik, J.~Both, and F.~A. Radu}, {\em {An
  adaptive solution strategy for Richards' equation}}, Computers \& Mathematics
  with Applications, 152 (2023), pp.~155--167.

\bibitem{erlend}
{\sc E.~Storvik, J.~W. Both, K.~Kumar, J.~M. Nordbotten, and F.~A. Radu}, {\em
  On the optimization of the fixed-stress splitting for biot's equations},
  International Journal for Numerical Methods in Engineering, 120 (2019),
  pp.~179--194.

\bibitem{vazquez2007porous}
{\sc J.~L. Vazquez}, {\em The Porous Medium Equation: Mathematical Theory},
  Oxford Mathematical Monographs, Clarendon Press, 2007.

\end{thebibliography}

\end{document}